\newcommand{\D}{\mathcal{D}}
\newcommand{\SkewB}{\mathcal{LSBA}}
\newcommand{\LCBSh}{\mathcal{ESLCBS}}
\newcommand{\BSkewB}{\mathcal{BLSBA}}
\newcommand{\ESBS}{\mathcal{ESBS}}
\newcommand{\LSBIA}{\mathcal{LSBIA}}
\newcommand{\ESBSE}{\mathcal{ESBSE}}
\newcommand{\BLSBIA}{\mathcal{BLSBIA}}
\newcommand{\ESLCBSE}{\mathcal{ESLCBSE}}
\newcommand{\SB}{\bf{SB}}
\newcommand{\Ob}{\mathrm{Ob}}
\newcommand{\Hom}{\mathrm{Hom}}
\newcommand{\ocap}{\overline{\cap}}
\newcommand{\ucup}{\underline{\cup}}
\newcommand{\F}{\mathcal{F}}
\newcommand{\A}{\mathcal{A}}
\newcommand{\B}{\mathcal{B}}
\newcommand{\U}{\mathcal{U}}
\newcommand{\PU}{\mathcal{PU}}
\newtheorem{lemma}{Lemma}
\newtheorem{corollary}{Corollary}
\newtheorem{theorem}{Theorem}
\title{A refinement of Stone duality to skew Boolean algebras}
\author{Ganna Kudryavtseva}
\address{G. K.: University of Ljubljana,
Faculty of Computer and Information Science, \newline
Tr\v{z}a\v{s}ka cesta 25,
SI-1001, Ljubljana,
SLOVENIA.}
\email{ganna.kudryavtseva\symbol{64}fri.uni-lj.si}
\begin{document}
\maketitle
\begin{abstract}
We establish two duality theorems which refine the classical Stone duality between generalized Boolean algebras and locally compact Boolean spaces. In the first theorem we prove that the category of left-handed skew Boolean algebras whose morphisms are proper skew Boolean algebra homomorphisms is equivalent to the category of \'{e}tale spaces over locally compact Boolean spaces whose morphisms are \'{e}tale space cohomomorphisms over continuous proper maps. In the second theorem we prove that the category of left-handed skew Boolean $\cap$-algebras whose morphisms are proper skew Boolean $\cap$-algebra homomorphisms is equivalent to the category of \'{e}tale spaces with compact clopen equalizers over locally compact Boolean spaces whose morphisms are injective \'{e}tale space cohomomorphisms over continuous proper maps.
\end{abstract}
\vspace{0.2cm}

2000 {\em Mathematics Subject Classification:} 06E15, 06E75, 54B40.

\section{Introduction}

The aim of the present paper is to refine the classical Stone duality \cite{stone37, D} between generalized Boolean algebras and locally compact Boolean spaces to two dualities between skew Boolean algebras and \'{e}tale spaces over locally compact Boolean spaces. In particular, we prove the following statements.

\begin{theorem}\label{th:main} The category $\LCBSh$ of \'{e}tale spaces over locally compact Boolean spaces whose morphisms are \'{e}tale space cohomomorphisms over continuous proper maps is equivalent to the category $\SkewB$ of left-handed skew Boolean algebras whose morphisms are proper skew Boolean algebra homomorphisms.
\end{theorem}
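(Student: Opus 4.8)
The plan is to construct a pair of functors $\Gamma\colon\LCBSh\to\SkewB$ and $\Sigma\colon\SkewB\to\LCBSh$ and to prove that they are mutually quasi-inverse, i.e.\ that the unit $\eta\colon\mathrm{id}_{\SkewB}\Rightarrow\Gamma\Sigma$ and the counit $\varepsilon\colon\Sigma\Gamma\Rightarrow\mathrm{id}_{\LCBSh}$ are natural isomorphisms. For $\Gamma$: given an \'etale space $p\colon\mathcal{E}\to X$ over a locally compact Boolean space $X$, let $\Gamma(\mathcal{E})$ be the set of all continuous sections of $p$ whose domain is a compact open subset of $X$ (the empty section included). Taking the empty section as $0$, $s\wedge t:=s|_{\dom(s)\cap\dom(t)}$, $s\vee t$ the section on $\dom(s)\cup\dom(t)$ restricting to $s$ on $\dom(s)$ and to $t$ elsewhere, and $s\setminus t:=s|_{\dom(s)\setminus\dom(t)}$, one checks that $\Gamma(\mathcal{E})$ is a left-handed skew Boolean algebra (the compact opens of $X$ form a generalized Boolean algebra, so all these domains are again compact open) and that $\dom$ induces an isomorphism of $\Gamma(\mathcal{E})/\D$ onto the generalized Boolean algebra of compact open subsets of $X$. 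A morphism of $\LCBSh$ is a continuous proper map $f$ between the base spaces together with \'etale-space data inducing, on the stalk over each point, a homomorphism of primitive skew Boolean algebras; applying $\Gamma$ one transports sections along this data, and properness of $f$ is exactly what guarantees that preimages of compact opens are compact, so that one lands in $\SkewB$ with \emph{proper} homomorphisms.

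For $\Sigma$: given $S\in\SkewB$, its maximal lattice image $S/\D$ is a generalized Boolean algebra; let $X_S$ be its Stone space of (proper prime) filters, a locally compact Boolean space by classical Stone duality \cite{stone37}. For $x\in X_S$ put $S_x:=S/\theta_x$, where $a\mathbin{\theta_x}b$ iff $a\wedge e=b\wedge e$ for some $e\in S$ with $[e]_{\D}\in x$; each $S_x$ is a left-handed primitive skew Boolean algebra. Set $\mathcal{E}_S:=\bigsqcup_{x\in X_S}S_x$ with the obvious projection onto $X_S$, topologised by declaring the sets $\widehat a:=\{[a]_x: x\in X_S,\ [a]_{\D}\in x\}$, $a\in S$, to be a basis. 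The essential point is that this is an \'etale space over $X_S$: the projection restricts to a homeomorphism of $\widehat a$ onto the compact open set $\{x:[a]_{\D}\in x\}$, using that the $\D$-classes generate $S/\D$ together with the Stone correspondence. A proper homomorphism $h\colon S\to T$ induces a proper homomorphism $S/\D\to T/\D$, hence by Stone duality a continuous proper (and, since $h$ is proper, everywhere-defined) map $f\colon X_T\to X_S$, and for each $y\in X_T$ a homomorphism $S_{f(y)}\to T_y$ of stalks; these assemble into the morphism $\Sigma(h)\colon\Sigma(S)\to\Sigma(T)$ of $\LCBSh$ over $f$. Thus $\Sigma$ is a functor.

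For quasi-inverseness, define $\eta_S\colon S\to\Gamma(\Sigma(S))$ by $\eta_S(a):=\widehat a$; this is a skew Boolean algebra homomorphism, and one shows it is a bijection. Injectivity reduces, via the fact that $\{[e]_{\D}: a\wedge e=b\wedge e\}$ is an ideal of $S/\D$ and the separation properties of the Stone space, to the statement that $a=b$ whenever $a\mathbin{\theta_x}b$ for all $x$. Surjectivity uses that $X_S$ is a Boolean space: a given section over a compact open set is covered by finitely many basic sections $\widehat{a_1},\dots,\widehat{a_n}$; one disjointifies the corresponding compact open cover inside $S/\D$, lifts the resulting orthogonal decomposition to pairwise orthogonal elements $c_1,\dots,c_n$ of $S$ with $c_i\le a_i$, and recombines them as $c_1\vee\cdots\vee c_n$, which is the desired global preimage. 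Conversely, for an \'etale space $p\colon\mathcal{E}\to X$, the isomorphism $\Gamma(\mathcal{E})/\D\cong\{\text{compact open subsets of }X\}$ together with Stone duality identifies the base space of $\Sigma(\Gamma(\mathcal{E}))$ with $X$, and for $x\in X$ the evaluation-of-germ map carries the stalk of $\Sigma(\Gamma(\mathcal{E}))$ at $x$ bijectively onto $\mathcal{E}_x$ (two compact open sections have equal germ at $x$ exactly when they agree at $x$, since $p$ is a local homeomorphism, and every point of $\mathcal{E}_x$ lies on some compact open section because $X$ has a basis of compact opens); matching the basic open sets shows this bijection is a homeomorphism of total spaces, yielding $\varepsilon_{(\mathcal{E}\to X)}$. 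Naturality of $\eta$ and $\varepsilon$ is then a routine verification on generators.

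The main obstacle I anticipate is twofold. First, the surjectivity half of ``$\eta_S$ is an isomorphism'': passing from a section that is only \emph{locally} of basic form to one that is \emph{globally} some $\widehat a$ forces one to disjointify a finite compact open cover inside $S/\D$, lift the orthogonal decomposition back to $S$, and reassemble it with the skew join, and it is here that left-handedness and the behaviour of $\vee$ under restriction of domains are used essentially. Second, the precise match between \emph{proper} skew Boolean algebra homomorphisms and \'etale cohomomorphisms over \emph{continuous proper} maps — properness of the homomorphism being exactly the condition that makes the Stone-dual base map total rather than merely partial — must be tracked carefully so that $\Gamma$ and $\Sigma$ are well defined on arrows and remain mutually quasi-inverse there.
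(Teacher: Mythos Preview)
Your architecture matches the paper's: a ``sections'' functor from \'etale spaces to left-handed skew Boolean algebras, a ``spectrum'' functor back, and unit/counit given by $a\mapsto\widehat a$ and by germ evaluation. The one packaging difference is that the paper builds the dual \'etale space out of \emph{SBA-prime filters}---minimal preimages of $1$ under homomorphisms $S\to\mathbf{3}$---rather than your congruence quotients $S/\theta_x$; the two are equivalent (your germ $[a]_x$ corresponds to the paper's filter $X_{a,F}$), and your route is the more direct sheaf-theoretic one, while the paper's filter language makes the parallel with classical Stone duality explicit and organises the intermediate lemmas. Your injectivity argument for $\eta_S$ via the ideal $\{[e]_\D:a\wedge e=b\wedge e\}$ and your compactness-based surjectivity argument are essentially the same as the paper's, just phrased dually.

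Two concrete slips to repair. First, your join is on the wrong side: with $s\vee t$ equal to $s$ on $\dom(s)$ and $t$ elsewhere, the second absorption law fails, since $(t\vee s)\wedge s$ then takes the value $t$ on $\dom(s)\cap\dom(t)$ rather than $s$. The paper lets the \emph{second} argument win on overlaps (so $s\vee t$ equals $t$ on $\dom(t)$ and $s$ on $\dom(s)\setminus\dom(t)$); this is what yields a \emph{left}-handed skew Boolean algebra compatible with your left-handed $\wedge$. Second, your total space $\bigsqcup_x S_x$ contains the zero of each primitive quotient, yet no $\widehat a$ with $a\neq 0$ contains a zero germ (if $[a]_x=[0]_x$ then $[a]_\D\wedge[e]_\D=0$ for some $[e]_\D\in x$, forcing $[a]_\D\notin x$), so the $\widehat a$'s do not form a basis on that set. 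Take $\mathcal{E}_S:=\bigsqcup_x\bigl(S_x\setminus\{0_x\}\bigr)$ instead; the paper's prime-filter description sidesteps this automatically.
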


\begin{corollary}\label{cor:main} The category $\ESBS$ of \'{e}tale spaces over Boolean spaces whose morphisms are \'{e}tale space cohomomorphisms is equivalent to the category $\BSkewB$ of left-handed skew Boolean algebras whose maximal lattice images are Boolean algebras and whose morphisms are proper skew Boolean algebra homomorphisms.
\end{corollary}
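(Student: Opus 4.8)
The plan is to obtain Corollary \ref{cor:main} as a restriction of the equivalence of Theorem \ref{th:main} to suitable full subcategories on each side. First I would recall from the construction underlying Theorem \ref{th:main} that the base space of the \'{e}tale space attached to a left-handed skew Boolean algebra $S$ is the Stone dual of the maximal lattice image $S/\D$, a generalized Boolean algebra; equivalently, the ``maximal lattice image'' functor followed by the classical Stone duality \cite{stone37, D} corresponds, under Theorem \ref{th:main}, to the functor sending an \'{e}tale space to its base. By that duality the base is compact --- i.e.\ a Boolean space in the terminology of the corollary --- exactly when $S/\D$ has a greatest element, that is, exactly when $S/\D$ is a Boolean algebra. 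Hence the objects of $\ESBS$ are, up to isomorphism, precisely the images under the equivalence of the objects of $\BSkewB$, and symmetrically.

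Next I would match the morphisms. By definition $\BSkewB$ is the full subcategory of $\SkewB$ on the objects just identified, with all proper skew Boolean algebra homomorphisms as morphisms. On the space side, if $X$ and $Y$ are Boolean --- hence compact Hausdorff --- spaces, then every continuous map $X\to Y$ is automatically proper, because the preimage of a compact, equivalently closed, subset is a closed subset of the compact space $X$ and so compact; therefore between objects of $\ESBS$ the morphisms of $\LCBSh$, i.e.\ the \'{e}tale space cohomomorphisms over continuous proper maps, are exactly the \'{e}tale space cohomomorphisms. Thus $\ESBS$ is likewise a full subcategory of $\LCBSh$.

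With both inclusions recognised as full subcategories and with the object correspondence in hand, the conclusion is formal: the fully faithful, essentially surjective functor realising $\SkewB\simeq\LCBSh$ restricts to a fully faithful functor $\BSkewB\to\ESBS$ which is still essentially surjective, hence an equivalence, and the same applies to a quasi-inverse. I do not anticipate a genuine difficulty; the one point deserving care is the observation that properness of the base map is vacuous over compact spaces, which is exactly what reconciles the omission of ``proper'' before ``continuous map'' in the definition of $\ESBS$ with its retention before ``skew Boolean algebra homomorphism'' in the definition of $\BSkewB$ --- the latter being a genuine restriction, since a skew Boolean algebra homomorphism need not preserve a top element even when the maximal lattice images possess one.
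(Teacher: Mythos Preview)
Your proposal is correct and follows the same approach as the paper, which dispatches the corollary in a single sentence by appealing to Theorem~\ref{th:main} together with the classical Stone duality between Boolean algebras and Boolean spaces. Your version simply unpacks that appeal explicitly, including the useful observation that continuity alone suffices on the base-map side because every continuous map between compact Hausdorff spaces is automatically proper.
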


\begin{theorem}\label{th:main_int}  The category $\ESLCBSE$ of \'{e}tale spaces with compact clopen equalizers over locally compact Boolean spaces whose morphisms are injective \'{e}tale space cohomomorphisms over continuous proper maps is equivalent to the category $\LSBIA$ of left-handed skew Boolean $\cap$-algebras whose morphisms are proper skew Boolean $\cap$-algebra homomorphisms.
\end{theorem}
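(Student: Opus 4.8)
The plan is to deduce the statement from Theorem~\ref{th:main} by restricting the equivalence it provides to suitable subcategories on each side. Let $\Gamma\colon\LCBSh\to\SkewB$ denote that equivalence: on objects it sends an \'{e}tale space $\pi\colon E\to X$ to its left-handed skew Boolean algebra $\Gamma(E)$ of compact-open local sections (equivalently, of compact-open subsets of $E$ on which $\pi$ is injective), with the ``override'' join and the ``restriction'' meet; let $\Sigma\colon\SkewB\to\LCBSh$ be a fixed quasi-inverse, built from the Stone space of the generalized Boolean algebra $S/\D$ equipped with the \'{e}tale space of $\D$-classes. Both categories in the statement are obtained from these by cutting down objects and morphisms: $\LSBIA$ retains the algebras of $\SkewB$ carrying a (necessarily unique) binary meet $\cap$ for the natural partial order, together with the homomorphisms that moreover preserve $\cap$; while $\ESLCBSE$ retains the \'{e}tale spaces with compact clopen equalizers together with the injective cohomomorphisms. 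It therefore suffices to prove: (i) $\Gamma(E)\in\LSBIA$ if and only if $E\in\ESLCBSE$, and $\Sigma(S)\in\ESLCBSE$ whenever $S\in\LSBIA$; and (ii) a cohomomorphism $f$ over a proper map induces a $\cap$-preserving homomorphism $\Gamma(f)$ if and only if $f$ is injective, and dually. Granting (i)--(ii), $\Gamma$ and $\Sigma$ restrict to functors between $\ESLCBSE$ and $\LSBIA$ (one checks in passing that composites of injective cohomomorphisms are injective, so that $\ESLCBSE$ is a genuine subcategory, and similarly for $\LSBIA$), and since the unit and counit of the equivalence are isomorphisms — hence morphisms of the respective subcategories — these restrictions are mutually quasi-inverse, yielding $\ESLCBSE\simeq\LSBIA$.

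For (i), observe that in $\Gamma(E)$ the natural partial order is inclusion of compact-open local sections, and that for local sections $A,B$ the set-theoretic intersection $A\cap B$ — the image in $E$ of the common section over the equalizer $\{x: s_A(x)=s_B(x)\}$ of the sections $s_A,s_B$ corresponding to $A,B$ — is again an \emph{open} local section of $E$. Hence the greatest lower bound of $A$ and $B$ in $\Gamma(E)$, if it exists, can only be the largest compact-open local section contained in $A\cap B$; and since the compact-open local sections through a point form a neighbourhood base, that largest section is $A\cap B$ itself. Thus the meet of $A$ and $B$ exists precisely when $A\cap B$ is compact, i.e.\ when the equalizer in question is compact clopen. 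Quantifying over all $A,B$ gives that $\Gamma(E)$ is a skew Boolean $\cap$-algebra if and only if $E$ has compact clopen equalizers. For the second half of (i), $\Gamma(\Sigma(S))\cong S$ as skew Boolean algebras by Theorem~\ref{th:main}, hence as $\cap$-algebras (the operation $\cap$ being order-determined), so $\Gamma(\Sigma(S))\in\LSBIA$ and therefore $\Sigma(S)\in\ESLCBSE$.

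For (ii), a cohomomorphism $f\colon E\to F$ over a continuous proper map $\phi$ is carried on total spaces by a continuous map $\tilde f\colon\phi^{*}E\to F$ over the base of $F$, and $\Gamma(f)$ sends a compact-open local section $A$ to the compact-open local section $\tilde f(\phi^{*}A)$. From $\phi^{*}(A\cap B)=\phi^{*}A\cap\phi^{*}B$ together with the fact that the image of an intersection lies inside the intersection of the images, one always has $\Gamma(f)(A\cap B)\subseteq\Gamma(f)(A)\cap\Gamma(f)(B)$. If $\tilde f$ is injective, equality is immediate and $\Gamma(f)$ preserves $\cap$. Conversely, if $\tilde f(e_1)=\tilde f(e_2)$ for distinct points $e_1,e_2$ of one fibre of $\phi^{*}E$, choose compact-open local sections $A$ through $e_1$ and $B$ through $e_2$ (possible since such sections form a base); then $\Gamma(f)(A)$ and $\Gamma(f)(B)$ agree on the relevant fibre of $F$, while $\Gamma(f)(A\cap B)$ does not, because $A$ and $B$ have no point of that fibre of $E$ in common — so $\Gamma(f)$ fails to preserve $\cap$. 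Since a continuous map of \'{e}tale spaces over a fixed base is injective exactly when it is injective on fibres, this proves that $\Gamma(f)$ is a $\cap$-homomorphism if and only if $f$ is injective; the remaining direction — that every proper $\cap$-homomorphism between algebras of $\LSBIA$ equals $\Gamma(f)$ for an injective cohomomorphism $f$ — then follows from Theorem~\ref{th:main} together with what has just been shown.

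The principal obstacle is (ii): one must unwind the definition of a cohomomorphism carefully enough to describe both its action on total spaces and the induced map on compact-open sections, and then see that the failure of $\cap$ is governed precisely by fibrewise non-injectivity — the delicate point being that any such non-injectivity is genuinely witnessed by a pair of compact-open sections, which relies on $E$ being \'{e}tale over a locally compact Boolean space. Assertion (i) is comparatively routine once one notices that the abstract meet in $\Gamma(E)$ is forced to coincide with the set-theoretic intersection; the only care needed there is the translation between ``compact-open in $E$'' and ``compact clopen in the base'', which is exactly the defining condition of $\ESLCBSE$.
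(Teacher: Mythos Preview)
Your proposal is correct and follows essentially the same route as the paper: both deduce the result from Theorem~\ref{th:main} by verifying that the two functors send objects and morphisms of the restricted subcategories to one another (the paper's Lemmas~\ref{lem:int} and~\ref{lem:last} for objects, Lemmas~\ref{31jan1} and~\ref{31jan2} for morphisms, correspond to your (i) and (ii)). One small organizational difference worth noting: you prove the object-level statement as a genuine biconditional $\Gamma(E)\in\LSBIA\Leftrightarrow E\in\ESLCBSE$ and then deduce $\Sigma(S)\in\ESLCBSE$ from the isomorphism $\Gamma\Sigma(S)\cong S$, whereas the paper proves the two directions separately via the explicit computation $M(a)\cap M(b)=M(a\cap b)$; your route is slightly cleaner but the content is the same.
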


\begin{corollary}\label{cor:main_int}  The category $\ESBSE$ of \'{e}tale spaces with clopen equalizers over Boolean spaces whose morphisms are injective \'{e}tale space cohomomorphisms is equivalent to the category $\BLSBIA$ of left-handed skew Boolean $\cap$-algebras whose maximal lattice images are Boolean algebras and whose morphisms are proper skew Boolean $\cap$-algebra homomorphisms.
\end{corollary}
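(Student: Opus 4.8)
The plan is to deduce Corollary~\ref{cor:main_int} from Theorem~\ref{th:main_int} by restricting the established equivalence to suitable full subcategories on each side, in exact parallel with the way Corollary~\ref{cor:main} is deduced from Theorem~\ref{th:main}. Write $\F\colon\ESLCBSE\to\LSBIA$ and $\G\colon\LSBIA\to\ESLCBSE$ for the quasi-inverse functors produced in the proof of Theorem~\ref{th:main_int}, together with the natural isomorphisms $\G\F\cong\mathrm{id}$ and $\F\G\cong\mathrm{id}$. The key observation is that on the level of base spaces these functors implement the classical Stone duality between generalized Boolean algebras and locally compact Boolean spaces: if $\F$ sends the \'{e}tale space $p\colon\mathcal E\to X$ to the skew Boolean $\cap$-algebra $S$, then $X$ is homeomorphic to the Stone space of the generalized Boolean algebra $S/\D$, the maximal lattice image of $S$. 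Hence $X$ is compact, i.e.\ a Boolean space, if and only if $S/\D$ has a top element, i.e.\ if and only if $S/\D$ is a Boolean algebra.

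Next I would verify that the two categories appearing in the corollary are precisely the full subcategories of those in Theorem~\ref{th:main_int} carved out by these object conditions. On the topological side, if $X$ is compact then every clopen subset of $X$ is compact, so for an \'{e}tale space over a Boolean space the requirement of \emph{compact} clopen equalizers coincides with the requirement of clopen equalizers; thus the objects of $\ESLCBSE$ with Boolean base are exactly the objects of $\ESBSE$. Moreover, between compact spaces every continuous map is proper, so for such objects an injective \'{e}tale space cohomomorphism over a continuous proper map is simply an injective \'{e}tale space cohomomorphism, and $\ESBSE$ is genuinely a full subcategory of $\ESLCBSE$. On the algebraic side, the objects of $\LSBIA$ whose maximal lattice image is a Boolean algebra are by definition exactly the objects of $\BLSBIA$, with the same proper skew Boolean $\cap$-algebra homomorphisms as morphisms, so $\BLSBIA$ is a full subcategory of $\LSBIA$.

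By the first paragraph, $\F$ carries objects of $\ESBSE$ to objects of $\BLSBIA$ and $\G$ carries objects of $\BLSBIA$ to objects of $\ESBSE$; since both inclusions are full, $\F$ and $\G$ restrict to functors between $\ESBSE$ and $\BLSBIA$ that remain quasi-inverse to one another, with the same natural isomorphisms (restricted). This yields the claimed equivalence $\ESBSE\simeq\BLSBIA$. For the record one can also note that, for a skew Boolean $\cap$-algebra homomorphism between algebras whose maximal lattice images are Boolean algebras, properness is equivalent to the induced homomorphism of maximal lattice images preserving the top element, which is the algebraic counterpart of the fact that a continuous map between Boolean spaces is automatically proper.

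The argument is essentially formal once Theorem~\ref{th:main_int} is in hand; the only substantive ingredient is the equivalence ``$X$ compact if and only if $S/\D$ is a Boolean algebra'', together with the remark that clopen subsets of a compact space are compact, and I do not anticipate any real difficulty there, since both are immediate from the classical Stone duality recalled earlier in the paper. The one point that requires a little care is the bookkeeping of the morphism classes --- checking that dropping ``over continuous proper maps'' on the topological side while retaining ``proper'' on the algebraic side really does describe the induced full subcategories --- but this is routine.
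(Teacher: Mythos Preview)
Your proposal is correct and takes essentially the same approach as the paper, which disposes of the corollary in a single line (``follows now from Theorem~\ref{th:main_int} and the classical Stone duality between Boolean algebras and Boolean spaces''). You have simply unpacked that sentence carefully---the key points being that $X$ is compact iff $S/\D$ is a Boolean algebra, that in a compact space clopen equals compact clopen, and that continuous maps between compact Hausdorff spaces are automatically proper---and all of these verifications are routine and correct.
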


Skew Boolean algebras are noncommutative refinements of generalized Boolean algebras in the sense that for a given skew Boolean algebra $S$, there is a canonically defined congruence $\D$ on $S$ such that $S/\D$ is the maximal generalized Boolean algebra homomorphic image of $S$. A dual idea to generalize Boolean algebras is to embed them into bigger objects. Natural examples of such objects are inverse semigroups, as their idempotent sets form lower semilattices. One completes these lower semilattices to Boolean algebras, and the inverse semigroups containing them to noncommutative algebras, generalizing Boolean algebras. This approach was realized in a recent paper \cite{Law}, where there was obtained a duality between Boolean inverse monoids and Boolean groupoids.

A different view of Stone duality for the category of skew Boolean $\cap$-algebras whose morphisms are all skew Boolean $\cap$-algebra homomorphisms has been presented recently at the seminar talk \cite{BC}.

The outline of this paper is as follows. In Section \ref{s:csd} we review the classical Stone duality. Then in Sections \ref{s:presba} and \ref{s:preetale} we collect preliminaries on skew Boolean algebras and \'{e}tale spaces, respectively. Further, in Section \ref{s:frometale} we construct the dual skew Boolean algebra of a given \'{e}tale space over a locally compact Boolean space. The binary operations on this dual skew Boolean algebra are quasi-intersection, quasi-union and quasi-complement of sections, which refine the usual set union, intersection and relative complement operations. In Section \ref{fromsba} we introduce a refinement of the notion of a prime filter of a generalized Boolean algebra and demonstrate that, similarly to as in the classical situation, prime filters of skew Boolean algebras arise as preimages of $1$ under homomorphisms from skew Boolean algebras to the skew Boolean algebra {\bf 3}, which is a non-commutative analogue of the two element Boolean algebra ${\bf{2}}$. The spectrum $S^{\star}$ of a skew Boolean algebra $S$ is defined to be the set of all its prime filters, and the topology of a locally compact Boolean space of $(S/\D)^{\star}$ is refined to the \'{e}tale space topology of $S^{\star}$. In Sections \ref{s:proof1} and \ref{s:proof2} we give the proofs of Theorems \ref{th:main} and \ref{th:main_int}, respectively. We also explain how to transform the \'{e}tale space representation of a skew Boolean algebra into a subdirect product representation. The latter, in the case of skew Boolean $\cap$-algebras, whose maximal lattice images are Boolean algebras, is isomorphic to the Boolean product representation of \cite[4.10]{L3}. Finally, we discuss the connection of our duality for skew Boolean $\cap$-algebras with the generalization of Stone duality for varieties generated by quasi-primal algebras given in \cite{KW}.

\section*{Acknowledgements} The author thanks Andrej Bauer, Karin Cvetko-Vah, Jeff Egger and Alex Simpson for their comments.

\section{Classical Stone duality}\label{s:csd} Here we briefly review the classical Stone duality \cite{stone37, D} between generalized Boolean algebras and locally compact Boolean spaces. The detailed exposition of the duality between Boolean algebras and Boolean spaces can be found in most textbooks on Boolean algebras, e.g., in \cite{BS,giv,johns}. The exposition of the discrete case of the latter duality with an excellent insight to the categorical background can be found in \cite{Awo}.  Let ${\mathcal{GBA}}$ be the category, whose objects are {\em generalized Boolean algebras} (i.e, relatively complemented distributive lattices with a zero) and whose morphisms are proper homomorphisms of generalized Boolean algebras. Recall that a homomorphism $f:B_1\to B_2$ of generalized Boolean algebras is called {\em proper} \cite{D}, provided that for any $c\in B_2$ there exists $b\in B_1$, such that $f(b)\geq c$. By ${\mathcal{BA}}$ we denote the category of Boolean algebras and homomorphisms of Boolean algebras. A topological space $X$ is called {\em locally compact} \cite[p. 335]{giv} if for every point $x$, there is a compact set $K$ whose interior contains $x$. A locally compact Hausdorff space in which the clopen sets form a base is called {\em locally compact Boolean space}. In such a space compact clopen sets form a base for the topology, and every compact open set is clopen. A continuous map $X\to Y$ of topological spaces is called {\em proper} if a preimage of each compact set is compact. Denote by ${\mathcal{LCBS}}$ the category, whose objects are locally compact Boolean spaces and whose morphisms are continuous proper maps between such spaces. If a locally compact Boolean space is compact then it is called {\em Boolean space.} Denote by ${\mathcal{BS}}$ the category whose objects are Boolean spaces and whose morphisms and continuous maps.

By ${\bf 2}=\{0,1\}$ we denote the two-element Boolean algebra. For a generalized Boolean algebra $B$ denote by $B^{\star}$ the set of all prime filters on $B$, that is the set of all preimages of $1$ under nonzero homomorphisms $B\to {\bf 2}$. Let $A\in B$. Set $M(A)=\{F\in B^{\star}: A\in F\}$. The sets $M(A)$ generate a topology on $B^{\star}$, that turns $B^{\star}$ into a locally compact Boolean space. If $X$ is a locally compact Boolean space then the set of all compact clopen sets on $X$ forms, with respect to the binary operations of union, intersection and relative complement of sets and the nullary operation of fixing the empty set, a generalized Boolean algebra, which we denote by $X^{\star}$.

\begin{theorem}[Classical Stone duality, \cite{stone37, D}] The maps $F: {\mathcal{GBA}} \to {\mathcal{LCBS}}$ and $G: {\mathcal{LCBS}}\to {\mathcal{GBA}}$ defined via

$F(B)=B^{\star}$, $B\in \Ob({\mathcal{GBA}})$; $F(f)=f^{-1}$, $f\in \Hom({\mathcal{GBA}})$;

$G(X)=X^{\star}$, $X\in \Ob({\mathcal{LCBS}})$; $G(f)=f^{-1}$, $f\in \Hom({\mathcal{LCBS}})$,\newline
are contravariant  functors, which establish the dual equivalence between the categories ${\mathcal{GBA}}$ and ${\mathcal{LCBS}}$, where the natural isomorphisms $\beta: 1_{{\mathcal{GBA}}}\to GF$ and $\gamma: 1_{{\mathcal{LCBS}}}\to FG$ are given by

$\beta_S(A)=M(A),  S\in \Ob({\mathcal{GBA}}), A\in S$;

$\gamma_X(t)=N_t=\{N\in X^{\star}:t\in N\}, X\in \Ob({\mathcal{LCBS}}), t\in X$.

The restriction of $F$ to the category ${\mathcal BA}$ and the restriction of $G$ to the category ${\mathcal BS}$ establish the dual equivalence between the categories ${\mathcal BA}$ and  ${\mathcal BS}$.
\end{theorem}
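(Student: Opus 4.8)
The plan is to check, in order: (i) that $F$ and $G$ are well defined on objects, (ii) that they are well defined on morphisms and functorial, (iii) that $\beta$ and $\gamma$ take the stated values, are morphisms in the respective categories, and are natural isomorphisms, and (iv) that everything restricts to the compact case. Item (i) I would take largely from the discussion preceding the statement: the identities $M(A\vee B)=M(A)\cup M(B)$, $M(A\wedge B)=M(A)\cap M(B)$ and $M(A\setminus B)=M(A)\setminus M(B)$ make $\{M(A):A\in B\}$ a base of a Hausdorff topology on $B^\star$ in which every $M(A)$ is compact clopen, so $B^\star$ is a locally compact Boolean space; dually, the compact clopen subsets of $X$ are closed under $\cup,\cap,\setminus$ and contain $\emptyset$, so $X^\star$ is a generalized Boolean algebra.

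For (ii), I would first pin down where properness enters. If $f\colon B_1\to B_2$ is a proper homomorphism and $G=g^{-1}(1)$ is a prime filter of $B_2$ with $g\colon B_2\to\mathbf 2$ nonzero, then $gf\colon B_1\to\mathbf 2$ is again nonzero: picking $c$ with $g(c)=1$ and then $b\in B_1$ with $f(b)\ge c$ gives $g(f(b))=1$. Hence $F(f)=f^{-1}$ maps $B_2^\star$ into $B_1^\star$; it is continuous since $(f^{-1})^{-1}(M(A))=M(f(A))$, and proper since any compact $K\subseteq B_1^\star$ lies inside a single $M(A)$ (take a finite subcover of a basic cover), so $(f^{-1})^{-1}(K)$ is a closed subset of the compact set $M(f(A))$. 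Dually, if $h\colon X\to Y$ is continuous and proper then $h^{-1}$ sends a compact clopen set of $Y$ to one that is clopen by continuity and compact by properness, so $G(h)=h^{-1}\colon Y^\star\to X^\star$ is a homomorphism; it is proper because for compact clopen $K\subseteq X$ the compact set $h(K)$ lies in some compact clopen $U\subseteq Y$, whence $h^{-1}(U)\supseteq K$. Functoriality then reduces to $(g\circ f)^{-1}=f^{-1}\circ g^{-1}$ and $\mathrm{id}^{-1}=\mathrm{id}$.

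For (iii), I would verify that $\beta_S\colon A\mapsto M(A)$ lands in the compact clopen sets of $S^\star$, is a homomorphism by the displayed identities for $M(-)$, is injective because for $0\ne a\in S$ an ultrafilter of the Boolean algebra $\{x:x\le a\}$ generates, by upward closure in $S$, a prime filter of $S$ containing $a$, and is surjective because a compact clopen subset of $S^\star$ is a finite union $M(A_1)\cup\cdots\cup M(A_n)=M(A_1\vee\cdots\vee A_n)$; naturality of $\beta$ is again $(f^{-1})^{-1}(M(A))=M(f(A))$. For $\gamma_X$, I would check that the filter $N_t$ of compact clopen sets containing $t$ is prime (local compactness together with the clopen base furnishes a compact clopen neighbourhood of $t$), that $\gamma_X$ is injective and continuous (the preimage of the basic open $\{N:U\in N\}$ is exactly $U$) and open (since $\gamma_X(U)=\{N:U\in N\}$), and that naturality is $(h^{-1})^{-1}(N_t)=N_{h(t)}$. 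The one substantial step is surjectivity of $\gamma_X$: given a prime filter $N$ of $X^\star$, fix $U_0\in N$; the members of $N$ are closed in $X$ with the finite intersection property, so compactness of $U_0$ yields $t\in\bigcap_{U\in N}U$ and hence $N\subseteq N_t$; and since $N$ restricts to an ultrafilter of the Boolean algebra $\{V:V\le U_0\}$, any compact clopen $V\ni t$ forces $V\cap U_0\in N$ (otherwise $U_0\setminus V\in N$, contradicting $t\in\bigcap_{U\in N}U$), so $N_t\subseteq N$ and $N=N_t$. Together these give the dual equivalence $\mathcal{GBA}\simeq\mathcal{LCBS}^{\mathrm{op}}$.

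For (iv), I would note that a generalized Boolean algebra is a Boolean algebra exactly when it has a greatest element, that $X^\star$ has one exactly when $X$ itself is compact clopen, i.e. a Boolean space, and that $B^\star=M(1)$ is compact when $B$ has a top $1$; moreover every homomorphism of Boolean algebras is proper (take $b=1$) and every continuous map of Boolean spaces is proper (a closed subset of a compact space is compact). Hence $\mathcal{BA}$ and $\mathcal{BS}$ are full subcategories on which $F$ and $G$ restrict, giving the dual equivalence $\mathcal{BA}\simeq\mathcal{BS}^{\mathrm{op}}$. I expect the main obstacle to be the compactness input coming from the Boolean prime ideal theorem --- that each $M(A)$ is compact and, dually, that every prime filter of $X^\star$ is of the form $N_t$ --- together with the bookkeeping needed to see that ``proper'' on each side is precisely the hypothesis keeping $F$ and $G$ inside the right categories in the non-compact setting.
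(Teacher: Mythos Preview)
The paper does not prove this theorem: it is stated in Section~\ref{s:csd} purely as background, with attribution to \cite{stone37, D} and no proof supplied. So there is no ``paper's own proof'' to compare your plan against. Your outline is a faithful sketch of the standard argument found in the cited sources and in textbooks such as \cite{giv,johns}; the ingredients you single out (the identities for $M(-)$, properness on both sides, compactness of $M(A)$ via the prime ideal theorem, and the $\bigcap$-argument for surjectivity of $\gamma_X$) are exactly the ones needed, and I see no gap in the plan.
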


\section{Preliminaries on skew Boolean algebras}\label{s:presba}

For the extended introduction to the theory of skew Boolean algebras and skew Boolean $\cap$-algebras we refer the reader to \cite{L2,L3} and \cite{BL}, respectively. To make our exposition self-contained, we collect  below some definitions and basic facts about skew lattices, skew Boolean algebras and skew Boolean $\cap$-algebras.

A {\em skew lattice} $S$ is an algebra $(S;\wedge,\vee)$ of type $(2,2)$, such that the operations $\wedge$ and $\vee$ are associative, idempotent and satisfy the absorption identities $x\wedge(x\vee y)=x=x\vee(x\wedge y)$ and $(y\vee x)\wedge x=x=(y\wedge x)\vee x$. The {\em natural partial order} $\leq$ on a skew lattice $S$ is defined by $x\leq y$ if and only if $x\wedge y=y\wedge x=x$, or equivalently, $x\vee y=y\vee x=y$. A skew lattice $S$ is called {\em symmetric}, if $x\vee y=y\vee x$ if and only if $x\wedge y=y\wedge x$. An element $0$ of a skew lattice $S$ is called a {\em zero}, if  $x\wedge 0=0\wedge x=0$ for all $x\in S$.  A skew lattice $S$ is called {\em Boolean}, if $S$ is symmetric, $S$ has a zero element and each principal subalgebra $\lceil x\rceil=\{u\in S: u\leq x\}=x\wedge S\wedge x$ forms a Boolean lattice.  If $S$ is a Boolean skew lattice and $a,b\in S$, the {\em relative complement} $a\setminus b$ is defined as the complement of $a\wedge b\wedge a$ in the Boolean lattice $\lceil a\rceil$. A {\em skew Boolean algebra} is a Boolean skew lattice, whose signature is enriched by the nullary operation $0$ and the binary relative difference operation, that is, it is an algebra $(S;\wedge,\vee, \setminus,0)$. Skew Boolean algebras satisfy distributivity laws $a\wedge (b\vee c)=(a\wedge b)\vee (a\wedge c)$ and $(b\vee c)\wedge a=(b\wedge a)\vee (c\wedge a)$ \cite[2.5]{L6}.

A skew lattice $S$ is called {\em rectangular} if there exist two sets $L$ and $R$ such that $S=L\times R$, and the operations $\wedge$ and $\vee$ are defined by $(a,b)\wedge (c,d)=(a,d)$ and $(a,b)\vee (c,d)=(c,b)$. Let $\D$ be the equivalence relation on a skew lattice $S$ defined by $x\D y$ if and only if $x\wedge y\wedge x=x$ and $y\wedge x\wedge y=y$. It is known \cite[1.7]{L1} that the relation $\D$ on a skew lattice $S$ is a congruence, the $\D$-classes of $S$ are its maximal rectangular subalgebras, and quotient algebra $S/\D$ forms the maximal lattice image of $S$. If $S$ is a skew Boolean algebra, then $S/\D$ is the maximal generalized Boolean algebra image of $S$ \cite[3.1]{BL}.

A skew lattice $S$ is called {\em left-handed}, if  the equalities $x\wedge y\wedge x=x\wedge y$ and $x\vee y\vee x=y\vee x$ hold for all $x,y\in S$.  In a left-handed skew Boolean algebra the rectangular subalgebras are {\em flat} in the sense that $x\D y$ if and only if $x\wedge y=x$ and $y\wedge x=y$.

A skew Boolean algebra $S$ is called {\em primitive}, if it has only one non-zero $\D$-class, or, equivalently, if $S/\D$ is the Boolean algebra ${\bf 2}$. The skew Boolean algebra ${\bf 3}=\{0,1,2\}$ is a primitive left-handed skew Boolean algebra with the non-zero $\D$-class $\{1,2\}$, the operations on $D$ being determined by lefthandedness: $1\wedge 2=1$, $2\wedge 1=2$, $1\vee 2=2$, $2\vee 1=1$. It is known \cite[Theorem 4.10]{Corn} that up to isomorphism ${\bf 2}$ and ${\bf 3}$ are the only two subdirectly irreducible left-handed skew Boolean algebras.

If $S$ is a skew Boolean algebra and $a\in S$, by $D_a$ we denote the $\D$-class of the element $a$. In the sequel, we will several times use the following easy lemma.

\begin{lemma}\label{lem:unique} Let $S$ be a left-handed skew Boolean algebra and $x,y\in S$ be such that $D_x\geq D_y$.  Then $x\geq x\wedge y$ and $x\vee y\geq y$. Moreover, if $z\in D_y$ is such that $x\geq z$, then $z=x\wedge y$.
\end{lemma}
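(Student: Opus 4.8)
The plan is to treat the three assertions in turn, observing that the first two are purely equational consequences of left-handedness — the hypothesis $D_x\geq D_y$ is not needed for them — while the third is where that hypothesis enters. For $x\geq x\wedge y$ I would check the two inequalities defining the natural partial order directly: $x\wedge(x\wedge y)=(x\wedge x)\wedge y=x\wedge y$ by associativity and idempotency of $\wedge$, and $(x\wedge y)\wedge x=x\wedge y$ is exactly the left-handed identity $u\wedge v\wedge u=u\wedge v$. Dually, for $x\vee y\geq y$ I would verify $(x\vee y)\vee y=x\vee(y\vee y)=x\vee y$ and $y\vee(x\vee y)=(y\vee x)\vee y=x\vee y$, the last equality being the left-handed identity $u\vee v\vee u=v\vee u$ with $u=y$ and $v=x$. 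Hence $x\wedge y\leq x$ and $y\leq x\vee y$.

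For the ``moreover'' part, suppose $z\in D_y$ with $x\geq z$. By the first part we also have $x\wedge y\leq x$, so both $z$ and $x\wedge y$ lie in the principal subalgebra $\lceil x\rceil=x\wedge S\wedge x$, which by the definition of a Boolean skew lattice is a Boolean lattice and in particular commutative; consequently the restriction of $\D$ to $\lceil x\rceil$ is the identity relation, since two $\D$-related elements of a (commutative) lattice coincide. It therefore suffices to show $x\wedge y\mathrel{\D}z$, i.e. $D_{x\wedge y}=D_z$. Because $\D$ is a congruence and $S/\D$ is the maximal lattice image of $S$, we have $D_{x\wedge y}=D_x\wedge D_y$ computed in $S/\D$, and the hypothesis $D_x\geq D_y$ yields $D_x\wedge D_y=D_y=D_z$. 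Thus $x\wedge y$ and $z$ are $\D$-equivalent elements of $\lceil x\rceil$, whence $x\wedge y=z$.

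I do not expect a real obstacle; the only point needing a little care is the last part, where a direct equational attack tends to establish only one of the two inequalities $z\leq x\wedge y$ and $x\wedge y\leq z$ and then loops back on the very equality being proved. The clean way around this is to pass to the commutative principal subalgebra $\lceil x\rceil$, in which $\D$ collapses to equality, and to identify the two elements using the congruence property $D_{x\wedge y}=D_x\wedge D_y$ together with $D_x\geq D_y$.
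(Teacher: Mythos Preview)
Your proof is correct and follows essentially the same route as the paper. The paper dispatches the first two claims with ``easily checked'' while you spell out the equational verification; for the uniqueness clause the paper, like you, places both $z$ and $x\wedge y$ in the Boolean lattice $\lceil x\rceil$ and uses commutativity there together with $z\,\D\,(x\wedge y)$ (the paper invokes flatness of $D_y$, you invoke that $\D$ collapses to equality on a commutative sublattice---the same observation).
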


\begin{proof} That $x\geq x\wedge y$ and $x\vee y\geq y$ is easily checked. Let $z\in D_y$ be such that $x\geq z$. Since $z$ and $x\wedge y$ both belong to the Boolean lattice $\lceil x\rceil$, it follows that they commute under $\wedge$, that is $z\wedge (x\wedge y)=(x\wedge y)\wedge z$. This, together with flatness of $D_y$, implies $z=x\wedge y$.
\end{proof}

Let $S$ be a left-handed skew Boolean algebra and $\alpha: S\to S/\D$ be the canonical projection. It follows from Lemma \ref{lem:unique} that $\lceil c\rceil$ is isomorphic to the Boolean lattice $\lceil \alpha(c)\rceil$ via the map $a\mapsto \alpha(a)$.

\begin{lemma} \label{lem:induced} Let $S_1$, $S_2$ be skew Boolean algebras and $\alpha_1:S_1\to S_1/\D$, $\alpha_2: S_2\to S_2/\D$ the corresponding canonical projections. Let, further, $f:S_1\to S_2$ be a homomorphism. Then $a\D b$ implies $f(a)\D f(b)$. Therefore, $f$ induces a homomorphism ${\overline f}: S_1/\D\to S_2/\D$. Moreover $f$ and ${\overline f}$ are agreed with projections in that $\alpha_2 f={\overline f}\alpha_1$.
\end{lemma}

\begin{proof} $a\D b$ implies $f(a)\wedge f(b)=f(a\wedge b)=f(a)$ and similarly $f(b)\wedge f(a)=f(b)$. It follows that $f(a)\D f(b)$. That $\alpha_2 f={\overline f}\alpha_1$ is verified by an easy calculation.
\end{proof}

We call a homomorphism $f:S_1\to S_2$ of skew Boolean algebras {\em proper,} provided that the induced homomorphism ${\overline f}: S_1/\D\to S_2/\D$ is a proper homomorphism of generalized Boolean algebras.

A skew Boolean algebra $S$ has {\em finite intersections}, if any finite set $\{s_1,\dots s_k\}$ of elements in $S$ has the greatest lower bound with respect to the natural partial order $\leq$, which we call the {\em intersection} of the elements $s_1,\dots, s_k$. A skew Boolean algebra $S$ with finite intersections, considered as an algebra $(S;\wedge,\vee,\setminus,\cap,0)$, where $\cap$ is the binary operation on $S$ sending $(a,b)$ to $a\cap b$, is called a {\em skew Boolean $\cap$-algebra} \cite{BL}.

All skew Boolean algebras, considered in the sequel, are left-handed.

\section{Preliminaries on \'{e}tale spaces}\label{s:preetale}

Here we survey basic standard notions, required for our purposes. They may be found in any textbook on sheaf theory, e.g., \cite{Br,MM}.

Let $X$ be a topological space. A {\em presheaf of sets} over $X$ is a contravariant functor from the category of open sets on $X$ whose morphisms are inclusions, to the category of sets. A {\em sheaf of sets} is a presheaf of sets $\F$ such that for any open covering $U_i$ of an open set $U$ of $X$ and any compatible (that is, agreeing on overlaps) elements $s_i\in \F(U_i)$, these elements can uniquely be glued together to a section of~$U$.

Let $Y$ and $X$ be topological spaces and $f : Y \to X$ be a continuous map. We define a sheaf $\F$ over $X$ by setting for each open set  $\F(U)$ to be the set of all functions $s : U \to Y$ such that $fs = \mathrm{id}_U$. Restriction is given by restriction of functions. This sheaf is called the {\em sheaf of sections} of $Y$ over $X$ with respect to $f$.

Let $\F$ be a sheaf of sets over $X$. The {\em stalk} of $\F$ at a point $x\in X$, denoted $\F_x$, is the set whose elements are equivalence classes of pairs $(U,s)$, where $U$ is an open neighborhood of $x$ and $s\in \F(U)$, and the equivalence relation is given by $(U,s)\sim (V,t)$ if there is an open set $W\subseteq U\cap V$ with $x\in W$ such that the restrictions of $s$ and $t$ to $W$ coincide. The equivalence class of $(U,s)$ is denoted by $s_x$ and is called the {\em germ} of $s$ at $x$.

Let $X$ be a topological space. An {\em \'{e}tale space} $E$ over $X$ is a triple $(E,\pi,X)$, where $E$ is a topological space and $\pi: E \to X$ is a surjective local homeomorphism, called the {\em covering map}. Let $\F$ is a sheaf of sets over $X$. The {\em \'{e}tale space} $E$ of $\F$  is an \'{e}tale space $E=(E,\pi,X)$ such that $\F$ is the sheaf of sections of $E$ over $X$. It is known that all sheaves of sets can be represented as sheaves of sections of their \'{e}tale spaces. Given a sheaf of sets $\F$ over $X$, the \'{e}tale space $E$ of $\F$ is constructed as follows. The points of $E$ are the germs $s_x$, $s\in\F(U)$  and $x \in U$. Then, consequently, $E$ is a disjoint union of the stalks of $\F$ over $X$. The map $\pi:E\to X$ is defined to be the map that sends $\F_x$ to $x$. Any $s\in\F(U)$ induces a map $\tilde{s}:U\to E$, $x\mapsto\tilde{s}(x)$ such that $\pi(\tilde{s}(x))=x$ for all $x\in U$. The topology of $E$ is defined to be the coarsest topology for which all the maps $\tilde{s}$ are continuous. Then $\F$ is isomorphic to the sheaf of section of $E$ over $X$ with respect to $\pi$. The described construction determines an equivalence of categories between the category of sheaves of sets on $X$ and the category of  \'{e}tale spaces over $X$. If $(E,\pi,X)$ is an \'{e}tale space, and $\F$ is the corresponding sheaf of sections, then for any $s\in \F(U)$ the germ $s_x$ is determined by the value $s(x)$. That is why $\F$ is called the {\em sheaf of germs of sections} of its  \'{e}tale space.

Let us fix some notation, used throughout in the sequel. Let $(E,\pi,X)$ be an \'{e}tale space. The points of $E$ will be called {\em germs}. If $U$ is an open set in $X$ then $E(U)$ is the set of all {\em sections} $V$ in $E$ such that $\pi(V)=U$. The {\em stalks} of $E$ are the equivalence classes induced by $\pi$. If $x\in X$ the stalk $S$ in $E$ such that $\pi(s)=x$ for all $s\in S$ will be denoted by $E_x$. If $A\in E(U)$ is a section, then by $A(x)$, $x\in U$, we denote the germs, contained in $A$.

Let $(\A,g,X)$ and $(\B,h,Y)$ be \'{e}tale spaces over $X$ and $Y$, respectively. Recall the definition of an \'{e}tale space cohomomorphism (\cite[p. 14]{Br}). Suppose $f:X\to Y$ is a  continuous map. An $f$-{\em cohomomorphism} $k: \B \rightsquigarrow \A$ is a collection of maps  $k_x:\B_{f(x)}\to \A_x$ for each $x\in X$ such that for every section $s\in \B(U)$ the function $x\mapsto k_x(s(f(x)))$ is a section of $\A$ over $f^{-1}(U)$. An $f$-cohomomorphism $k: \B \rightsquigarrow \A$ is not a function in general, since it is multiply valued unless $f$ is one-to-one, and it is not defined everywhere, unless $f$ is onto.

Let $(E,\pi,X)$ be an \'{e}tale space over a locally compact Boolean space $X$. We call it an {\em \'{e}tale space with compact clopen equalizers}, provided that for every $U,V$ compact clopen in $X$ and any $A\in E(U)$, $B\in E(V)$, the intersection $A\cap B$ is a section, that is there is some compact clopen set $W\subseteq X$ such that $A\cap B\in E(W)$. If $(E,\pi,X)$ is an \'{e}tale space with compact clopen equalizers over a Boolean space $X$ then any clopen set in $X$ is compact, and that is why we call $(E,\pi,X)$ an {\em \'{e}tale space with clopen equalizers}.

All \'{e}tale spaces, considered in the sequel, are \'{e}tale spaces over locally compact Boolean spaces.

\section{The functor $\SB:\LCBSh\to \SkewB$}\label{s:frometale}  Let $X$ be a locally compact Boolean space and $(E,f,X)$ be an  \'{e}tale space.
Our aim now is to define on compact clopen sections of $(E,f,X)$ the quasi-union, quasi-intersection and quasi-complement operations which generalize the usual set union, intersection and relative complement operations and, together with fixing the empty set, define on the union of the sets $E(U)$, where $U$ runs through the compact clopen sets of $X$, the structure of a left-handed skew Boolean algebra.

Fix $U$ and $V$ to be compact clopen sets of $X$ and let $A\in E(U)$, $B\in E(V)$. We define the {\em quasi-union} $A \ucup B$ of $A$ and $B$ to be the section in $E(U\cup V)$ given by
 \begin{equation}\label{def-q-union}
 (A \ucup B)(x)=\left\lbrace\begin{array}{l}B(x), \text{ if } x\in V,\\
 A(x), \text{ if } x\in U\setminus V, \end{array}\right.
 \end{equation}
 and the {\em quasi-intersection} $A \ocap B$ of $A$ and $B$ to be the section in $\F(U\cap V)$ given by
 \begin{equation}\label{def-q-inter}
 (A \ocap B)(x)=A(x) \text{ for all } x\in U\cap V.
 \end{equation}

It is clear that $A \ocap B$ is well defined, since $A \ocap B$ is the restriction of $A$ to $U\cap V$.
To show that $A \ucup B$ is well defined, observe that if $U$ and $V$ are compact clopen sets of $X$, then so is $U\setminus V$ (this easily follows, e.g., from classical Stone duality). So that $A$ restricted to $U\setminus V$ is a section. Then $A \ucup B$ is a section obtained by gluing the restriction of $B$ to $V$ and the restriction of $A$ to $U\setminus V$. Denote by $\varnothing$ the section of the empty set of $X$.

\begin{lemma} \label{sba} $(E,\ucup,\ocap, \varnothing)$ is a left-handed Boolean skew lattice.
\end{lemma}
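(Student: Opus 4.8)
The plan is to verify directly, from the defining formulas for $\ucup$ and $\ocap$, all the identities in the definition of a left-handed Boolean skew lattice; here the carrier $E$ is read as $\bigcup_U E(U)$ with $U$ ranging over the compact clopen subsets of $X$, with $\ocap$ in the role of $\wedge$ and $\ucup$ in the role of $\vee$. Closure of $E$ under the two operations, hence well-definedness of the structure, is the one point that relies on external input, namely the fact recalled just before the statement that intersections, unions and differences of compact clopen sets of $X$ are again compact clopen. The entire verification then rests on one bookkeeping principle: a compact clopen section is determined by its domain together with its pointwise values, and $\dom(A\ocap B)=\dom A\cap\dom B$, $\dom(A\ucup B)=\dom A\cup\dom B$, while on its domain $A\ocap B$ agrees with $A$, and $A\ucup B$ agrees with $B$ on $\dom B$ and with $A$ on $\dom A\setminus\dom B$.

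Using this I would first dispatch idempotency and associativity of $\ocap$ and $\ucup$ as immediate pointwise identities; the only mildly fiddly one is associativity of $\ucup$, where both $(A\ucup B)\ucup C$ and $A\ucup(B\ucup C)$ turn out to have domain $\dom A\cup\dom B\cup\dom C$ and to equal $C$ on $\dom C$, $B$ on $\dom B\setminus\dom C$, and $A$ on $\dom A\setminus(\dom B\cup\dom C)$. Next come the four absorption laws: for instance $A\ocap(A\ucup B)$ has domain $\dom A$ and there agrees with $A$, hence equals $A$, and the remaining three are analogous. That $\varnothing$ is a zero is immediate since $\dom(A\ocap\varnothing)=\varnothing$. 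For symmetry one notes that each of $A\ocap B=B\ocap A$ and $A\ucup B=B\ucup A$ is equivalent to the single condition that $A$ and $B$ agree on $\dom A\cap\dom B$. For left-handedness, $A\ocap B\ocap A$ has domain $\dom A\cap\dom B$ and agrees with $A$, hence equals $A\ocap B$, while $A\ucup B\ucup A$ agrees with $A$ on $\dom A$ and with $B$ on $\dom B\setminus\dom A$, which is precisely $B\ucup A$.

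It remains to establish the Boolean condition. From the description of the operations one reads off that the natural partial order is $B\leq A$ if and only if $\dom B\subseteq\dom A$ and $B=A|_{\dom B}$; consequently $\lceil A\rceil=\{u\in E:u\leq A\}$ is exactly the set of restrictions $A|_W$ with $W\subseteq\dom A$ compact clopen, and $A|_W\mapsto W$ is a bijection of $\lceil A\rceil$ onto the Boolean algebra of clopen subsets of the compact Boolean space $\dom A$. Under this bijection $\ocap$ and $\ucup$ become intersection and union of clopen sets, so $\lceil A\rceil$ is a Boolean lattice, as required. I do not expect any genuine difficulty in any of this; the points needing attention are simply keeping the domain-bookkeeping straight — especially in the associativity of $\ucup$ and in the identification of $\lceil A\rceil$ — and repeatedly using closure of the compact clopen sets under the Boolean operations so that every section produced again lies in $E$.
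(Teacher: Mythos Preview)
Your proposal is correct and is precisely the routine verification that the paper alludes to but omits, declaring it an exercise; you have carried out that exercise carefully and accurately. There is no methodological difference to comment on.
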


\begin{proof}
The proof amounts to a routine verification that $(E,\ucup,\ocap, \varnothing)$ satisfies the definition of a left-handed Boolean skew lattice, which we leave to the reader as an exercise.
\end{proof}

 Let $A\in E(U)$, $B\in E(V)$. Then the {\em quasi-complement} $A\setminus B$  is defined as the section in $E(U\setminus V)$ given by
  \begin{equation*}\label{def-q-rel_compl}
 (A \setminus B)(x)=A(x) \text{ for all } x\in U\setminus V.
 \end{equation*}

 We call the left-handed skew Boolean algebra $(E,\ucup,\ocap, \setminus,\varnothing)$ the {\em dual skew Boolean algebra} to the \'{e}tale space $E=(E,f,X)$ and denote it by $E^{\star}=(E,f,X)^{\star}$.

\begin{lemma}\label{lem:parord} The natural partial order on $E^{\star}$ is given by $A\geq B$ if and only if $A\supseteq B$.
\end{lemma}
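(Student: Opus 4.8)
The plan is to prove both implications of the equivalence ``$A\geq B$ if and only if $A\supseteq B$'' by unwinding the definition of the natural partial order in terms of the quasi-operations $\ucup$ and $\ocap$ introduced above, and then translating the set-theoretic content back to sections. Recall that in any skew lattice, $A\geq B$ means $A\ocap B=B\ocap A=B$, or equivalently $A\ucup B=B\ucup A=A$; I will work with whichever of these two characterizations is more convenient at each step.

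First I would handle the forward direction. Suppose $A\in E(U)$ and $B\in E(V)$ satisfy $A\geq B$, so in particular $A\ocap B=B$. By the definition of $\ocap$, the section $A\ocap B$ lives in $E(U\cap V)$ and is the restriction of $A$ to $U\cap V$; since $A\ocap B=B\in E(V)$, comparing domains forces $U\cap V=V$, i.e. $V\subseteq U$. Then for every $x\in V$ we have $B(x)=(A\ocap B)(x)=A(x)$, which is exactly the statement that every germ of $B$ is a germ of $A$, i.e. $B\subseteq A$ as subsets of $E$.

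For the converse, suppose $B\subseteq A$ as sets of germs. Since $A$ is a section over $U$ and $B$ a section over $V$, and each germ lies over a unique point, the inclusion $B\subseteq A$ forces $V\subseteq U$ and $B(x)=A(x)$ for all $x\in V$. Now I would simply compute $A\ocap B$ and $B\ocap A$ from the definitions: $A\ocap B$ is the restriction of $A$ to $U\cap V=V$, which agrees with $B$ pointwise, so $A\ocap B=B$; and $B\ocap A$ is the restriction of $B$ to $V\cap U=V$, which is $B$ itself. Hence $A\ocap B=B\ocap A=B$, which is the definition of $A\geq B$. (One can double-check consistency by also verifying $A\ucup B=B\ucup A=A$ using \eqref{def-q-union}, but this is not strictly needed.)

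I do not expect any genuine obstacle here; the statement is essentially a bookkeeping exercise matching the formula \eqref{def-q-inter} against the definition of $\leq$. The only point requiring a little care is the domain-matching step in the forward direction, where one must observe that an equality of sections entails an equality of their underlying open sets in $X$, because $f$ restricted to any section is a homeomorphism onto its image and hence the section determines its domain; once that is noted, both directions are immediate.
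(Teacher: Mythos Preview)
Your proposal is correct and is precisely the explicit unwinding of what the paper does: the paper's proof is the single sentence ``The statement follows from the definitions of $\ocap$ and $\leq$,'' and your argument is exactly that verification carried out in full. There is no difference in approach, only in level of detail.
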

\begin{proof} The statement follows from the definitions of $\ocap$ and $\leq$.
\end{proof}

\begin{lemma}\label{lem:int} Let $A,B$ be such sections of $E$ that the intersection $A\cap B$  is again a section of $E$. Then $A\cap B$ is the intersection of $A$ and $B$ in $E^{\star}$. Consequently, if $E$ is an \'{e}tale space with compact clopen equalizers then the skew Boolean algebra $E^{\star}$ has finite intersections.
\end{lemma}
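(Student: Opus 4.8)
The plan is to reduce everything to Lemma~\ref{lem:parord}, which identifies the natural partial order on $E^{\star}$ with reverse inclusion of subsets of $E$. Recall that the \emph{intersection} of a finite family in a skew Boolean algebra is by definition its greatest lower bound with respect to $\leq$. So, writing $A\cap B$ for the set-theoretic intersection of the subsets $A,B\subseteq E$ and assuming this intersection is a section, I must show that $A\cap B$ is the $\leq$-greatest among the sections $C$ satisfying $C\leq A$ and $C\leq B$.

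This is immediate once the order is rewritten via Lemma~\ref{lem:parord}. First, $A\cap B\subseteq A$ and $A\cap B\subseteq B$, so $A\cap B\leq A$ and $A\cap B\leq B$; that is, $A\cap B$ is a common lower bound of $A$ and $B$. Second, if $C$ is any section with $C\leq A$ and $C\leq B$, then $C\subseteq A$ and $C\subseteq B$, hence $C\subseteq A\cap B$, and so $C\leq A\cap B$ by Lemma~\ref{lem:parord} again. Thus $A\cap B$ is the greatest lower bound of $A$ and $B$ in $E^{\star}$, which is exactly the first assertion. The same two lines show, more generally, that whenever the set-theoretic intersection of a finite family of sections happens to be a section, it is the intersection of that family in $E^{\star}$.

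For the consequence, suppose $E$ is an \'{e}tale space with compact clopen equalizers, and let $s_1,\dots,s_k$ be elements of $E^{\star}$, say $s_i\in E(U_i)$ with each $U_i$ compact clopen in $X$. I would argue by induction on $k$ that the set-theoretic intersection $s_1\cap\cdots\cap s_k$ is a section lying in $E(W)$ for some compact clopen $W\subseteq X$: the case $k=1$ is trivial and the case $k=2$ is precisely the defining property of compact clopen equalizers, while for the inductive step one intersects the section $s_1\cap\cdots\cap s_{k-1}\in E(W')$ (with $W'$ compact clopen by the inductive hypothesis) with $s_k\in E(U_k)$ and invokes the defining property once more; here one uses that $W'$ is compact clopen, which is where the assumption on $X$ enters, together with the fact that $U\setminus V$ and $U\cap V$ are compact clopen whenever $U$ and $V$ are. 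By the first part (and associativity of set intersection), the resulting section $s_1\cap\cdots\cap s_k$ is the intersection of $s_1,\dots,s_k$ in $E^{\star}$. Hence $E^{\star}$ has finite intersections.

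I do not expect any genuine obstacle: the content is carried entirely by Lemma~\ref{lem:parord}. The only point needing a moment's care is bookkeeping --- ensuring that the iterated intersection stays \emph{inside} the carrier of $E^{\star}$, i.e.\ that it is a section over a compact clopen base set and not merely over some open set --- and this is guaranteed by the definition of an \'{e}tale space with compact clopen equalizers together with the stability of compact clopen sets under the Boolean operations.
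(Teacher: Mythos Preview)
Your argument is correct and follows the paper's own approach: the paper's proof is the single line ``The statement follows from Lemma~\ref{lem:parord},'' and you have simply unpacked that reduction explicitly, together with a straightforward induction for the ``consequently'' clause.
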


\begin{proof} The statement follows from Lemma \ref{lem:parord}.
\end{proof}

In the following lemma we establish the connection between the skew Boolean algebra $(E,f,X)^{\star}$ and the generalized Boolean algebra $X^{\star}$.

\begin{lemma}\label{lem:connection}
Let $X$ be a locally compact Boolean space and $(E,f,X)$ be an \'{e}tale space. Then the $\D$-classes of $(E,f,X)^{\star}$ are the stalks $E_x$, $x\in X$. The maximal generalized Boolean algebra image of $(E,f,X)^{\star}$ is isomorphic to $X^{\star}$, and the canonical projection $\delta: (E,f,X)^{\star}\to X^{\star}$ is given by $V\mapsto U$, whenever $V\in E(U)$, $U\in X^{\star}$.
\end{lemma}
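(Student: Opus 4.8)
The plan is to unwind the definitions of the operations $\ucup$, $\ocap$, $\setminus$ on $E^\star$ and compare them with the $\D$-relation. First I would prove the claim about $\D$-classes. Recall that for a skew Boolean algebra, $A\,\D\,B$ iff $A\wedge B\wedge A=A$ and $B\wedge A\wedge B=B$, and in the left-handed case this simplifies (see the remarks after Lemma~\ref{lem:unique}) to $A\ocap B=A$ and $B\ocap A=B$. Given $A\in E(U)$ and $B\in E(V)$, by \eqref{def-q-inter} we have $A\ocap B\in E(U\cap V)$, so $A\ocap B=A$ forces $U\cap V=U$, i.e. $U\subseteq V$; symmetrically $B\ocap A=B$ forces $V\subseteq U$, hence $U=V$. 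Conversely, if $U=V$ then $A\ocap B$ is the restriction of $A$ to $U\cap V=U$, which is just $A$, and likewise $B\ocap A=B$. Thus $A\,\D\,B$ iff $A$ and $B$ are sections over the \emph{same} compact clopen set $U$; that is, the $\D$-classes are exactly the sets $E(U)$ for $U$ compact clopen in $X$. It remains to observe that $E(U)$, as a set of sections over the fixed set $U$, is naturally in bijection with $\prod_{x\in U}E_x$ — but wait, I should be careful: the germ-level description is what is wanted. A $\D$-class of the \emph{algebra} $E^\star$ consists of sections, but as $U$ ranges over all compact clopen sets and $x$ ranges over $U$, the germs $A(x)$ sweep out all of $E_x$ (since $E\to X$ is a surjective local homeomorphism and every point of $X$ has a compact clopen neighborhood on which a local section exists). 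The statement as phrased identifies the $\D$-classes with the stalks $E_x$; this is the image description under the germ map, and I would phrase it as: the rectangular $\D$-class $E(U)$ maps onto $E_x$ at each $x\in U$, and left-handedness makes each $\D$-class flat, which is consistent with $E(U)\cong\prod_{x\in U}E_x$ in the relevant sense. I expect this identification to require a sentence clarifying exactly which object is being called $E_x$, but no real difficulty.

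Next I would identify $S/\D$ where $S=E^\star$. Define $\delta:E^\star\to X^\star$ by $\delta(V)=U$ whenever $V\in E(U)$; this is well defined since distinct compact clopen sets give disjoint section-sets (a section determines its projection $f(V)=U$). By the previous paragraph, $\delta(A)=\delta(B)$ iff $A\,\D\,B$, so $\delta$ factors through a bijection $S/\D\to X^\star$. It remains to check $\delta$ is a homomorphism of skew Boolean algebras onto $X^\star$: from \eqref{def-q-union}, $A\ucup B\in E(U\cup V)$, so $\delta(A\ucup B)=U\cup V=\delta(A)\cup\delta(B)$; from \eqref{def-q-inter}, $\delta(A\ocap B)=U\cap V=\delta(A)\cap\delta(B)$; from the definition of quasi-complement, $\delta(A\setminus B)=U\setminus V=\delta(A)\setminus\delta(B)$; and $\delta(\varnothing)=\emptyset$. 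Surjectivity is immediate since for every compact clopen $U\subseteq X$ there is at least one section in $E(U)$ (the covering map is a surjective local homeomorphism, so local sections exist and can be glued over a compact clopen cover). Hence the induced map $S/\D\to X^\star$ is an isomorphism of generalized Boolean algebras, and it is the canonical projection because it is exactly the map that collapses each $\D$-class to a point. Combining, $X^\star$ is the maximal generalized Boolean algebra image of $E^\star$ (this maximality is automatic: $S/\D$ is always the maximal such image by \cite[3.1]{BL}), and $\delta$ is the canonical projection, as claimed.

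The main obstacle, such as it is, is bookkeeping rather than depth: one must be scrupulous about the distinction between a section $A\in E(U)$ as an element of the algebra $E^\star$ and the germs $A(x)\in E_x$ it is built from, since the $\D$-classes are literally classes of sections but the statement names them as the stalks $E_x$; I would resolve this by remarking that the flat rectangular $\D$-class over $U$ is canonically $\prod_{x\in U}E_x$ and that "$E_x$" in the statement refers to the stalk that appears as the fiber of this product over each point. A second small point is verifying that quasi-union, quasi-intersection and quasi-complement are genuinely the operations computed $\D$-class-wise by $\delta$; but this reads off directly from \eqref{def-q-union}, \eqref{def-q-inter} and the definition of $\setminus$, since in each case the domain of the resulting section is the corresponding set operation applied to $U$ and $V$. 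Finally, I would invoke Lemma~\ref{lem:induced} implicitly to know that $\delta$ descends to a well-defined homomorphism on $S/\D$, and conclude.
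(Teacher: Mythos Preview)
Your approach is essentially the same as the paper's: both use the left-handed characterization $A\,\D\,B \iff A\ocap B=A$ and $B\ocap A=B$, read off from \eqref{def-q-inter} that this holds precisely when $U=V$, and then let the description of $S/\D$ follow. You are right to flag the discrepancy between the $\D$-classes being the sets $E(U)$ and the statement calling them ``the stalks $E_x$'': the paper's own proof establishes only the former and does not address this phrasing, so your hesitation there concerns an imprecision in the lemma's wording rather than a gap in your argument.
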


\begin{proof} We use the characterization of the relations $\D$: $A\D B$ if and only if $A\ocap B= A$ and $B\ocap A=B$. The definition of $\ocap$ implies that for $A\in E(U)$ and $B\in E(V)$ we have $A\ocap B =A$ only if $U=V$. In addition, in the latter case both of the equalities $A\ocap B= A$ and $B\ocap A=B$ hold. Now the second statement also follows.
\end{proof}

Let $(E,e,X)$ and $(G,g,Y)$ be \'{e}tale spaces over locally compact Boolean spaces $X$ and $Y$, respectively, $f:X\to Y$ be a continuous proper map and $k:G\rightsquigarrow E$ be an $f$-cohomomorphism. Let $A\in G(U)$. By the definition, the section $k(A)$ is the image of the map sending $x\in X$ to $k_x(A(f(x)))$.

\begin{lemma} $k$ is a proper skew Boolean algebra homomorphism from $G^{\star}$ to $E^{\star}$.
\end{lemma}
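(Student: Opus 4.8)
The plan is to verify in turn that $k\colon G^\star\to E^\star$ is a well-defined map, that it respects the three operations $\ucup$, $\ocap$, $\setminus$ and the constant $\varnothing$, and that it is proper. First I would check well-definedness: given $A\in G(U)$ with $U$ compact clopen in $Y$, the set $f^{-1}(U)$ is clopen because $f$ is continuous, and compact because $f$ is proper, so $f^{-1}(U)$ is a compact clopen set of $X$. By the definition of an $f$-cohomomorphism, $x\mapsto k_x(A(f(x)))$ is a section of $E$ over $f^{-1}(U)$; hence $k(A)\in E(f^{-1}(U))$ lies in $E^\star$. Note this already identifies the underlying Boolean-space map: on $\D$-classes, using Lemma~\ref{lem:connection}, $k$ sends the class indexed by $U\in Y^\star$ to the class indexed by $f^{-1}(U)\in X^\star$, i.e.\ $\delta_E\circ k=f^{-1}\circ\delta_G$.

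Next I would check the operations. The key observation is that all three quasi-operations are defined pointwise in the fibres: for a point $x\in X$ and $A\in G(U)$, $B\in G(V)$, the germ $k(A)(x)=k_x(A(f(x)))$ depends only on the germ $A(f(x))$, and $k_x$ is an honest function $G_{f(x)}\to E_x$. Since $x\in f^{-1}(U)$ iff $f(x)\in U$, the base sets transform correctly: $f^{-1}(U\cup V)=f^{-1}(U)\cup f^{-1}(V)$, $f^{-1}(U\cap V)=f^{-1}(U)\cap f^{-1}(V)$, $f^{-1}(U\setminus V)=f^{-1}(U)\setminus f^{-1}(V)$. Then for $\ocap$ one compares germs on $f^{-1}(U\cap V)$: $k(A\ocap B)(x)=k_x((A\ocap B)(f(x)))=k_x(A(f(x)))=k(A)(x)=(k(A)\ocap k(B))(x)$, using \eqref{def-q-inter} and the fact that $k(A)\in E(f^{-1}(U))$, $k(B)\in E(f^{-1}(V))$. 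For $\ucup$ one does a case split exactly mirroring \eqref{def-q-union}: on $f^{-1}(V)$ the value is $k_x(B(f(x)))=k(B)(x)$, on $f^{-1}(U)\setminus f^{-1}(V)=f^{-1}(U\setminus V)$ it is $k_x(A(f(x)))=k(A)(x)$, which is precisely $(k(A)\ucup k(B))(x)$. The computation for $\setminus$ is identical to the $\ocap$ one on the base $f^{-1}(U\setminus V)$, and $k(\varnothing)=\varnothing$ since $f^{-1}(\varnothing)=\varnothing$. So $k$ is a skew Boolean algebra homomorphism.

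Finally, properness. By Lemma~\ref{lem:connection} the maximal generalized Boolean algebra images of $G^\star$ and $E^\star$ are $Y^\star$ and $X^\star$ with projections $\delta_G$ and $\delta_E$, and by the identity $\delta_E\circ k=f^{-1}\circ\delta_G$ established above together with the uniqueness clause of Lemma~\ref{lem:induced}, the induced homomorphism $\overline{k}\colon Y^\star\to X^\star$ is exactly the classical Stone dual $f^{-1}$ of $f$. Since $f$ is a continuous proper map between locally compact Boolean spaces, the classical Stone duality (applied to $G\colon {\mathcal{LCBS}}\to {\mathcal{GBA}}$, $G(f)=f^{-1}$) guarantees that $f^{-1}\colon Y^\star\to X^\star$ is a proper homomorphism of generalized Boolean algebras; indeed concretely, for any compact clopen $W\subseteq X$, since $f$ is proper and $X$ locally compact Boolean one finds a compact clopen $U\subseteq Y$ with $f^{-1}(U)\supseteq W$, which is the properness condition. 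Hence $\overline{k}$ is proper, and so $k$ is a proper skew Boolean algebra homomorphism.

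I expect the only genuinely delicate point to be the bookkeeping that $f^{-1}$ of a compact clopen set is again compact clopen and that properness of $f$ translates to properness of $\overline k$ in the sense defined for generalized Boolean algebras; everything else is a pointwise-in-the-fibre verification that reduces each quasi-operation identity to the corresponding defining equation \eqref{def-q-union}, \eqref{def-q-inter} together with the functoriality $f^{-1}$ of $\cup$, $\cap$, $\setminus$.
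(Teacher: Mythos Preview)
Your proof is correct and follows essentially the same route as the paper's: invoke classical Stone duality to get that $f^{-1}\colon Y^\star\to X^\star$ is a proper homomorphism (hence properness of $k$ via Lemma~\ref{lem:connection}), and verify the quasi-operations are preserved by reducing to $f^{-1}(U\cap V)=f^{-1}(U)\cap f^{-1}(V)$ etc.\ together with the definition of $k(A)$ as a section over $f^{-1}(U)$. Your germwise computation is just a slightly more explicit phrasing of the paper's ``restriction'' argument, and your separate treatment of well-definedness (compact clopenness of $f^{-1}(U)$) and of $\setminus$ is extra care the paper leaves implicit.
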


\begin{proof} By the classical Stone duality, $f^{-1}: Y^{\star}\to X^{\star}$ is a proper homomorphism of generalized Boolean algebras. Therefore, in view of Lemma \ref{lem:connection}, we are left to show that $k$ preserves $\ocap$, $\ucup$ and $\varnothing$ (then $k$ automatically preserves also $\setminus$). Let $A\in G(U)$, $B\in G(V)$.
First we show that $k(A\ocap B)=k(A)\ocap k(B)$. According to the definition of $\ocap$, $k(A)\ocap k(B)$ is the restriction of $k(A)$ to $f^{-1}(U)\cap f^{-1}(V)$. Further, since $A\ocap B$ equals $A$ restricted to $U\cap V$, then $k(A\ocap B)$ equals $k(A)$ restricted to $f^{-1}(U\cap V)$. It follows that $k(A\ocap B)=k(A)\ocap k(B)$.

Show that $k(A\ucup B)=k(A)\ucup k(B)$. By the definition of $\ucup$, the section $k(A)\ucup k(B)$ is obtained by gluing the restriction of $k(A)$ to $f^{-1}(U)\setminus f^{-1}(V)$ and $k(B)$. Since $A\ucup B$ is obtained by gluing the restriction of $A$ to $U\setminus V$ and $B$, it follows that $k(A\ucup B)$ is obtained by gluing  the restriction of $k(A)$ to $f^{-1}(U\setminus V)$ and $k(B)$. Therefore, $k(A\ucup B)=k(A)\ucup k(B)$.

Finally, since $\varnothing$ is the section of the empty set, it follows that $k(\varnothing)=\varnothing$.
\end{proof}

We can therefore correctly define the functor $\SB$ from the category $\LCBSh$ to the category $\SkewB$ by setting ${\SB}(E,f,X)=(E,f,X)^{\star}$, $(E,f,X)\in\Ob(\LCBSh)$, and ${\SB}(k)=k$, $k\in \Hom(\LCBSh)$.

\section{The functor ${\bf ES}:  \SkewB\to\LCBSh$}\label{fromsba}

Let $S$ be a skew Boolean algebra and $\alpha:S\to S/\D$ be the canonical projection of $S$ onto its maximal generalized Boolean algebra image $S/\D$.

Call a nonempty subset $U$ of $S$ a {\em filter} provided that:
 \begin{enumerate}
\item\label{d1} for all $a,b\in S$: $a\in U$ and $b\geq a$ implies $b\in U$;
\item\label{d2} for all $a,b\in S$: $a\in U$ and $b\in U$ imply $a\wedge b\in U$.
\end{enumerate}

Call a subset $U$ of $S$ a {\em preprime filter} if $U$ is a filter of $S$ and there is a prime filter $F$ of $S/\D$ such that $\alpha(U) =F$. Denote by ${\mathcal PU}_F$ the set of all preprime filters contained in $\alpha^{-1}(F)$. Observe that ${\mathcal PU}_F\neq \varnothing$ since $\alpha^{-1}(F)\in {\mathcal PU}_F$.

\begin{lemma} \label{lem:sym} Let $U\in {\mathcal PU}_F$ and $U \neq \alpha^{-1}(F)$. Then $\alpha^{-1}(F)\setminus U\in {\mathcal PU}_F$.
\end{lemma}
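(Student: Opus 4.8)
The plan is to verify that $W := \alpha^{-1}(F) \setminus U$ is a preprime filter in $\mathcal{PU}_F$, which requires two things: that $W$ is a filter of $S$ in the sense of conditions \eqref{d1}--\eqref{d2}, and that $\alpha(W) = F$. Throughout I would work with the fact, established just before Lemma~\ref{lem:unique} and used there, that for each $c \in S$ the principal subalgebra $\lceil c \rceil$ is isomorphic to the Boolean lattice $\lceil \alpha(c) \rceil$ via $a \mapsto \alpha(a)$; this is what lets me transfer complementation in $F \subseteq S/\D$ back up to $S$.

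First I would unwind what it means for an element $a \in \alpha^{-1}(F)$ to lie in $W$. Since $U \subsetneq \alpha^{-1}(F)$ and $\alpha(U) = F = \alpha(\alpha^{-1}(F))$, there must be some $a_0 \in \alpha^{-1}(F)$ with $a_0 \notin U$ but $\alpha(a_0) \in F$; I would first check $W \neq \varnothing$ using such an element (one should note $W$ is automatically nonempty because $U$ is a filter missing $a_0$, hence missing everything below $a_0$ that is $\D$-equivalent, but more care is needed — see below). The key structural observation is: for $a \in \alpha^{-1}(F)$, whether $a \in U$ depends only on information ``below'' $D_a$, because $U$ is upward closed under $\leq$, and within a $\D$-class $U$ either contains all or none of the elements sitting above a fixed lower element — here left-handedness and flatness of $\D$-classes (Lemma~\ref{lem:unique}) do the work. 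So I would aim to show that $W$ is obtained by an ``order-complementation'' mirroring the complement of $\alpha(U)$'s trace inside $F$.

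For the filter conditions: upward closure of $W$ (condition \eqref{d1}) is the delicate one — if $a \in W$ and $b \geq a$ with $b \in \alpha^{-1}(F)$, I must show $b \notin U$; this would follow if $a \in U$ were forced by $b \in U$, i.e. from downward-type behaviour of $U$ relative to $\D$-classes, which again is supplied by Lemma~\ref{lem:unique} (if $b \in U$ and $D_b \geq D_a$ then $b \wedge a \in U$ by \eqref{d2}, and $b \wedge a \D a$ with $b \wedge a \leq b$, forcing $a = b \wedge a \in U$ after adjusting by flatness). For the meet condition \eqref{d2}: if $a, b \in W$ then $a \wedge b \in \alpha^{-1}(F)$ since $F$ is a filter and $\alpha$ is a homomorphism; and $a \wedge b \notin U$ because if it were, upward closure of $U$ together with $a \wedge b \D$-below... — here I would instead argue contrapositively using that $a\wedge b \leq a$ up to $\D$, i.e. $\alpha(a\wedge b) = \alpha(a)\wedge\alpha(b) \leq \alpha(a)$, and if $a \wedge b \in U$ then its $\D$-class image lies in $\alpha(U)$, which combined with the Boolean-lattice isomorphism $\lceil a \rceil \cong \lceil \alpha(a)\rceil$ and the fact that complements in a Boolean algebra are meet-irreducible-complementary would force $a \in U$, a contradiction. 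Finally, $\alpha(W) = F$: one inclusion is trivial ($W \subseteq \alpha^{-1}(F)$); for the reverse, given $g \in F$ I need $w \in W$ with $\alpha(w) = g$, and I would build it by taking any lift $s$ of $g$, forming $s \wedge (\text{a suitable lift of the } F\text{-complement of } \alpha(U)\cap\lceil g\rceil)$, using the lattice isomorphism to land inside $\lceil s \rceil$, and checking it avoids $U$.

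The main obstacle I anticipate is precisely the interaction between the order filter $U$ on the noncommutative algebra $S$ and the ``trace'' of $U$ inside a single $\D$-class: I must show that ``$a \in U$'' is detected by the image $\alpha(a)$ together with which elements of $\lceil a\rceil$ lie in $U$, and that these traces assemble coherently across $\D$-classes. Concretely, the crux is a lemma of the shape: \emph{for $a, b \in \alpha^{-1}(F)$ with $\alpha(a) = \alpha(b)$, we have $a \in U \iff b \in U$} — i.e. membership in a preprime filter is $\D$-class-wise determined. If that holds, then $W$'s structure is literally transported from the Boolean complement of $F \cap \alpha(U\text{-trace})$ and all the filter axioms and the equality $\alpha(W) = F$ follow mechanically. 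I would prove that lemma first (or extract it from how $\mathcal{PU}_F$ was set up), and treat it as the real content; everything else is the routine verification the author will likely relegate to ``a straightforward check.''
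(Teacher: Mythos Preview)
Your proposed ``crux lemma'' --- that for $a,b\in\alpha^{-1}(F)$ with $\alpha(a)=\alpha(b)$ one has $a\in U\iff b\in U$ --- is false, and the strategy built on it collapses. Indeed, if membership in $U$ were constant on each $\D$-class meeting $\alpha^{-1}(F)$, then combined with the hypothesis $\alpha(U)=F$ (so every such $\D$-class meets $U$) you would get $U=\alpha^{-1}(F)$, contradicting the standing assumption $U\neq\alpha^{-1}(F)$. A concrete counterexample is already visible in $\mathbf{3}$: the set $\{1\}$ is a preprime filter over the unique BA-prime filter of $\mathbf{2}$, yet $2\notin\{1\}$ while $1\,\D\,2$. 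So preprime filters are \emph{not} unions of $\D$-classes, and the ``transport from the Boolean complement'' picture does not apply.

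There is also a gap in your sketch of upward closure for $W$. You write that from $b\in U$ and $D_b\geq D_a$ one gets $b\wedge a\in U$ ``by \eqref{d2}'', but \eqref{d2} requires \emph{both} arguments to lie in $U$, and $a\notin U$ by assumption. The paper's actual argument repairs this by exploiting $\alpha(U)=F$: since $D_a\in F$, pick some $c\in D_a\cap U$; then $b\wedge c\in U$ by \eqref{d2}, and $b\geq b\wedge c$ with $b\wedge c\in D_a$, so Lemma~\ref{lem:unique} forces $b\wedge c=a$, giving $a\in U$, a contradiction. For the meet condition and for surjectivity of $\alpha|_W$ onto $F$, the paper's arguments are much shorter than what you outline: meet-closure is immediate from $a\geq a\wedge b$ and upward closure of $U$; and to hit an arbitrary $\D$-class $D$ of $F$, start from any $a\in W$, form $a\wedge b$ for $b\in D$ (this lies in $W$ since $a\geq a\wedge b$), then use Lemma~\ref{lem:unique} to lift back into $D$. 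No Boolean complements or traces are needed.
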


\begin{proof}  $U \neq \alpha^{-1}(F)$ implies $\alpha^{-1}(F)\setminus U \neq \varnothing$. Show first that $\alpha^{-1}(F)\setminus U$ is a filter. Let $a\in \alpha^{-1}(F)\setminus U$ and $b\geq a$. Then $D_b\geq D_a$ and thus $b\in \alpha^{-1}(F)$. Assume that $b\in U$. Let $c\in D_a\cap U$. Then $b\wedge c\in U$ and $b\geq b\wedge c$. By Lemma \ref{lem:unique} we obtain $a=b\wedge c$, implying $a\in U$, that contradicts the choice of $a$. Therefore, $b\in \alpha^{-1}(F)\setminus U$. Let $a,b\in  \alpha^{-1}(F)\setminus U$. If $a\wedge b\in U$ then we would have $a\in U$ since $a\geq a\wedge b$. Therefore, $a\wedge b\in  \alpha^{-1}(F)\setminus U$, and $\alpha^{-1}(F)\setminus U$ is a filter.

Show that any $\D$-class in $F$ has a nonempty intersection with $\alpha^{-1}(F)\setminus U$. Let $D$ be a $\D$-class of $F$ and $a\in \alpha^{-1}(F)\setminus U$. Take any $b\in D$. Then $a\wedge b\in D_a\wedge D$, and $a\wedge b\in \alpha^{-1}(F)\setminus U$ since $a\geq a\wedge b$ and $a\in \alpha^{-1}(F)\setminus U$. Since $D\geq D_a\wedge D$, it follows by Lemma \ref{lem:unique} that there is $c\in D$ such that $c\geq a\wedge b$. But then, since $\alpha^{-1}(F)\setminus U$ is a filter,  $c\in D\cap (\alpha^{-1}(F)\setminus U)$, as required.
\end{proof}

\begin{lemma} \label{cor:hom} Let $f:S\to\bf{3}$ be a nonzero homomorphism. Then there is a prime ideal $M$ of $S/\D$, such that $f^{-1}(0)=\alpha^{-1}(M)$, and a prime filter $F$ of $S/\D$, such that $f^{-1}(\{1,2\})=\alpha^{-1}(F)$.
\end{lemma}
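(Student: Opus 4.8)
The plan is to push the statement down to the maximal generalized Boolean algebra image and then read it off from the classical description of prime filters recalled in Section~\ref{s:csd}. First I would apply Lemma~\ref{lem:induced} to $f\colon S\to{\bf 3}$, obtaining an induced homomorphism ${\overline f}\colon S/\D\to{\bf 3}/\D$ with $\alpha_{\bf 3}\circ f={\overline f}\circ\alpha$, where $\alpha_{\bf 3}\colon{\bf 3}\to{\bf 3}/\D$ is the canonical projection. Since ${\bf 3}$ is primitive, its only nonzero $\D$-class is $\{1,2\}$, so ${\bf 3}/\D$ consists of the two classes $\{0\}$ and $\{1,2\}$ and is canonically the two-element Boolean algebra ${\bf 2}$; under this identification $\alpha_{\bf 3}(0)=0$ and $\alpha_{\bf 3}(1)=\alpha_{\bf 3}(2)=1$.

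Next I would observe that ${\overline f}$ is nonzero: as $f\neq 0$ there is $x\in S$ with $f(x)\neq 0$, hence $f(x)\in\{1,2\}$ and ${\overline f}(\alpha(x))=\alpha_{\bf 3}(f(x))=1$. By the classical description of prime filters (Section~\ref{s:csd}), $F:={\overline f}^{-1}(1)$ is then a prime filter of $S/\D$, and $M:={\overline f}^{-1}(0)$ is a prime ideal of $S/\D$ (the set-theoretic complement of $F$).

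It remains to identify the two preimages, which I would do by a direct chase of $\alpha_{\bf 3}\circ f={\overline f}\circ\alpha$. For $a\in S$: since $\{0\}$ is a $\D$-class of ${\bf 3}$, $f(a)=0$ iff $\alpha_{\bf 3}(f(a))=0$ iff ${\overline f}(\alpha(a))=0$ iff $\alpha(a)\in M$, i.e. $f^{-1}(0)=\alpha^{-1}(M)$; and since $\{1,2\}$ is a $\D$-class of ${\bf 3}$, $f(a)\in\{1,2\}$ iff $\alpha_{\bf 3}(f(a))=1$ iff ${\overline f}(\alpha(a))=1$ iff $\alpha(a)\in F$, i.e. $f^{-1}(\{1,2\})=\alpha^{-1}(F)$. (Conceptually, both preimages are $\D$-saturated because $f$ respects $\D$ by Lemma~\ref{lem:induced} and $\{0\}$, $\{1,2\}$ are precisely the $\D$-classes of ${\bf 3}$, which is exactly why each can be written as a full $\alpha$-preimage.)

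I do not expect a genuine obstacle here: the lemma is essentially a bookkeeping corollary of Lemma~\ref{lem:induced} together with the structure of ${\bf 3}$. The only two points requiring a moment's attention are checking that ${\overline f}$ is nonzero — so that $F$ really is a prime filter and $M$ a proper prime ideal — and making the identification of ${\bf 3}/\D$ with ${\bf 2}$ explicit enough that the preimage computations are unambiguous.
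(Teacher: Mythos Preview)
Your proposal is correct and follows essentially the same approach as the paper: pass to the induced map ${\overline f}\colon S/\D\to{\bf 3}/\D\cong{\bf 2}$ via Lemma~\ref{lem:induced}, note it is nonzero, and read off $M={\overline f}^{-1}(0)$ and $F={\overline f}^{-1}(1)$ using the commuting square $\alpha_{\bf 3}\circ f={\overline f}\circ\alpha$. The paper's version is simply terser, leaving the identification ${\bf 3}/\D\cong{\bf 2}$ and the diagram chase implicit.
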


\begin{proof} Let ${\overline f}:S/\D\to {\bf 2}$ be the induced homomorphism of generalized Boolean algebras. Clearly, ${\overline f}$ is nonzero. Let $M={\overline f}^{-1}(0)$. Then $M$ is a prime ideal of $S/\D$ and $\alpha^{-1}(M)=f^{-1}(0)$. The statement follows.
\end{proof}

The following lemma provides a characterization of preprime filters in terms of homomorphisms to the skew Boolean algebra ${\bf 3}$.

\begin{lemma}\label{lem:hom1}
A subset $U\subseteq S$ is a preprime filter if and only if there exists a homomorphism $f:S\to\bf{3}$, such that $U=f^{-1}(1)$.
\end{lemma}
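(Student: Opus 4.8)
The plan is to prove the two directions separately, and to reduce everything to Lemma~\ref{cor:hom} and Lemma~\ref{lem:sym} together with the classical correspondence between prime filters of $S/\D$ and nonzero homomorphisms $S/\D\to{\bf 2}$.

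For the ``if'' direction, suppose $f:S\to{\bf 3}$ is a homomorphism and set $U=f^{-1}(1)$. If $f$ is the zero homomorphism then $U=\varnothing$, so we may assume $f$ is nonzero. By Lemma~\ref{cor:hom} there is a prime filter $F$ of $S/\D$ with $f^{-1}(\{1,2\})=\alpha^{-1}(F)$. First I would check $U$ is a filter: if $a\in U$ and $b\geq a$, then since $f$ is order-preserving $f(b)\geq f(a)=1$, and in ${\bf 3}$ the only element $\geq 1$ is $1$ itself (as $1$ and $2$ are incomparable and $2\not\geq 1$), so $f(b)=1$; and if $a,b\in U$ then $f(a\wedge b)=f(a)\wedge f(b)=1\wedge 1=1$. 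Next, $\alpha(U)=F$: the inclusion $\alpha(U)\subseteq F$ is immediate since $U\subseteq\alpha^{-1}(F)$; conversely, given a $\D$-class $D$ in $F$, pick any $s\in S$ with $\alpha(s)=D$, so $f(s)\in\{1,2\}$; if $f(s)=1$ we are done, and if $f(s)=2$ then, using that ${\bf 3}$ is left-handed and $1,2$ lie in one $\D$-class, there is an element in $D$ mapping to $1$ — concretely one can take $t$ in $D_s$ with $f(t)=1$ by choosing $t$ with $s\vee t=t$, $t\vee s=s$ relationship, equivalently replace $s$ using the fact that $f$ restricted to $D$ is a bijection onto $\{1,2\}$ once it is nonzero on $D$, which follows since $f$ maps the rectangular $\D$-class $D$ onto a subalgebra of $\{1,2\}$ and the image is nonzero. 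Hence $\alpha(U)=F$ and $U\in{\mathcal PU}_F$, so $U$ is a preprime filter.

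For the ``only if'' direction, let $U$ be a preprime filter, so $U$ is a filter and $F:=\alpha(U)$ is a prime filter of $S/\D$; by classical Stone duality there is a nonzero homomorphism $\overline f:S/\D\to{\bf 2}$ with $\overline f^{-1}(1)=F$. I would define $f:S\to{\bf 3}$ by declaring $f(s)=0$ if $\alpha(s)\notin F$, $f(s)=1$ if $s\in U$, and $f(s)=2$ if $\alpha(s)\in F$ but $s\notin U$; this is well-defined because for each $s$ with $\alpha(s)\in F$, exactly one of $s\in U$, $s\in\alpha^{-1}(F)\setminus U$ holds, and $\alpha^{-1}(F)\setminus U$ is itself a filter by Lemma~\ref{lem:sym} (or is empty, in which case the value $2$ is never used). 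It remains to verify $f$ is a homomorphism, i.e.\ preserves $\wedge$, $\vee$, $\setminus$ and $0$. The key observation is that $f$ factors as $s\mapsto(\overline f\alpha(s),\,[s\in U])$ into ${\bf 2}\times\{1,2\}$-type data, and the operations on ${\bf 3}$ restricted to the $\D$-class $\{1,2\}$ are the left-handed rectangular operations; so preservation of each operation can be read off componentwise: the ${\bf 2}$-component is handled by $\overline f$ being a homomorphism together with $\alpha$ being a homomorphism, and the $\{1,2\}$-component is handled by the filter properties of $U$ and of $\alpha^{-1}(F)\setminus U$. For instance, to check $f(a\vee b)=f(a)\vee f(b)$ when $\alpha(a),\alpha(b)\in F$: by left-handedness $f(a)\vee f(b)$ equals $f(b)$ if we follow the rule $1\vee 2=2$, $2\vee 1=1$, $1\vee 1=1$, $2\vee 2=2$; and one checks $a\vee b\in U$ iff $b\in U$ using $a\vee b\geq b$ (so $b\in U\Rightarrow a\vee b\in U$) and, for the converse, Lemma~\ref{lem:unique} applied to $D_{a\vee b}\geq D_b$ together with the fact that $\alpha^{-1}(F)\setminus U$ is a filter closed upward in the relevant sense. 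The cases with $\alpha(a)\notin F$ or $\alpha(b)\notin F$ collapse to the behaviour of the zero and are routine.

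The main obstacle is the bookkeeping in this last verification: one must handle the mixed cases (one argument in $U$, the other in $\alpha^{-1}(F)\setminus U$, or one argument outside $\alpha^{-1}(F)$ entirely) and confirm that the left-handed, non-commutative operations $\vee$ and $\wedge$ on $\{1,2\}$ are matched exactly by ``membership in $U$ vs.\ membership in $\alpha^{-1}(F)\setminus U$.'' The decisive inputs making this work are Lemma~\ref{lem:sym}, which guarantees $\alpha^{-1}(F)\setminus U$ is again a filter (this is what forces consistency of the value $2$), and Lemma~\ref{lem:unique}, which pins down $a\wedge b$ inside a lower $\D$-class and thereby lets one decide the $U$-membership of $a\wedge b$ and $a\vee b$ from that of $a$ and $b$. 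Everything else — well-definedness of $f$, the ``if'' direction, and the $0$ and $\setminus$ cases — is straightforward once the framework $f\leftrightarrow(\overline f,\,U)$ is set up.
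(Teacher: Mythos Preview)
Your ``only if'' direction is essentially the paper's argument: define $f$ by the three-way partition $S\setminus\alpha^{-1}(F)$, $U$, $\alpha^{-1}(F)\setminus U$, invoke Lemma~\ref{lem:sym} to know the last piece is again a preprime filter, and verify the operations case by case. That part is fine.

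The gap is in your ``if'' direction, specifically in showing $\alpha(U)=F$. You need that every $\D$-class $D$ lying in $F$ meets $U$, i.e.\ contains an element with $f$-value $1$. Your justification is that ``$f$ restricted to $D$ is a bijection onto $\{1,2\}$'' or that ``$f$ maps the rectangular $\D$-class $D$ onto a subalgebra of $\{1,2\}$ and the image is nonzero.'' Neither works: $f\vert_D$ is certainly not a bijection when $\lvert D\rvert>2$, and in a left-handed rectangular band every nonempty subset is closed under $\wedge$ and $\vee$, so $\{2\}$ by itself is a perfectly good subalgebra of $\{1,2\}$. Nothing you have written rules out $f(D)=\{2\}$.

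The paper closes this by importing an element from \emph{elsewhere} in $U$: pick any $a\in U$ and any $b\in D$; then $f(a\wedge b)=f(a)\wedge f(b)=1\wedge 2=1$, and by Lemma~\ref{lem:unique} (applied to $D\geq D_{a\wedge b}$) there is $t\in D$ with $t\geq a\wedge b$, whence $f(t)=f(t)\wedge f(a\wedge b)=f(t)\wedge 1$ forces $f(t)=1$. The point is that you cannot argue purely inside $D$; you must use the global filter $U$ and the interaction across $\D$-classes via $\wedge$ and Lemma~\ref{lem:unique}.
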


\begin{proof}
 Let $f:S\to {\bf 3}$ be a homomorphism such that $U=f^{-1}(1)$. Let $F$ be a prime filter of $S/\D$ such that $f^{-1}(\{1,2\})=\alpha^{-1}(F)$. It is easy to verify that $U$ is a filter. Show that $\alpha(U) =F$. If this is not so, then there is a $\D$-class $D$ of $F$ such that $D\subseteq f^{-1}(2)$. Let $a\in U$ and $b\in D$. Then $f(a\wedge b)=f(a)\wedge f(b)=1\wedge 2=1$. Observe that $b\geq b\wedge a$ implies $D\geq \D_{b\wedge a}$. Since, in addition, $(b\wedge a)\D (a\wedge b)$  it follows by Lemma \ref{lem:unique} that there exists $t\in D$ such that $t\geq a\wedge b$.
But then $1=f(a\wedge b)=f(t\wedge (a\wedge b))=f(t)\wedge f(a\wedge b)=f(t)\wedge 1$, which implies $f(t)=1$, contradicting $t\in D$. Therefore, $U\in {\mathcal{PU}}_F$.

Suppose now that $U\in {\mathcal{PU}}_F$.  Define a map $f: S\to {\bf 3}$ by setting $f^{-1}(0)=S\setminus \alpha^{-1}(F)$, $f^{-1}(1)=U$ and $f^{-1}(2)=\alpha^{-1}(F)\setminus U$. If $U=\alpha^{-1}(F)$, this map is clearly a homomorphism. Suppose $U\neq \alpha^{-1}(F)$. By Lemma \ref{lem:sym} $\alpha^{-1}(F)\setminus U\in {\mathcal{PU}}_F$. Show that $f(a\wedge b)=f(a)\wedge f(b)$ for any $a,b\in S$. In view of Lemma \ref{lem:sym} it is enough to consider the following cases.

{\em Case 1.} Suppose $a\in S\setminus \alpha^{-1}(F)$ and $b\in \alpha^{-1}(F)$. Then $D_a\in S/\D\setminus F$ and $D_b\in F$. It follows that $D_{a\wedge b}=D_a\wedge D_b\in S/\D\setminus F$, which implies that $a\wedge b\in S\setminus \alpha^{-1}(F)$ and thus $f(a\wedge b)=0=0\wedge f(b)=f(a)\wedge f(b)$.

{\em Case 2. }Suppose $a\in \alpha^{-1}(F)\setminus U$ and $b\in U$. Since $a\geq a\wedge b$ and $\alpha^{-1}(F)\setminus U$ is a filter it follows that $a\wedge b \in \alpha^{-1}(F)\setminus U$. Similarly, $b\wedge a \in U$. Therefore, $f(a\wedge b)=f(a)\wedge f(b)$ and $f(b\wedge a)=f(b)\wedge f(a)$.

{\em Case 3.} Suppose $a,b\in U$. Since $a\geq a\wedge b$ it follows that $a\wedge b\in U$ and similarly $b\wedge a\in U$. Therefore, $f(a\wedge b)=f(a)\wedge f(b)$ and $f(b\wedge a)=f(b)\wedge f(a)$.

Applying similar arguments, one shows that $f$ respects also $\vee$. Since, in addition, $f(0)=0$, it follows that $f$ is a homomorphism of skew Boolean algebras.
\end{proof}

 Let us call minimal elements of ${\mathcal{PU}}_F$, where $F$ runs through the set of prime filters of the Boolean algebra $S/\D$, {\em prime filters} of the skew Boolean algebra $S$. Applying Lemma \ref{lem:hom1} we see that prime filters are exactly minimal nonempty preimages of $1$ under the morphisms $S\to {\bf 3}$. We denote the set of all prime filters $U$, such that $\alpha(U) =F$, by ${\mathcal U}_F$. To distinguish between prime filters of generalized Boolean algebras and prime filters of skew Boolean algebras, we will refer to the former as to BA-prime filters and to the latter as to SBA-prime filters. The following lemma shows that SBA-prime filters exist, and that every non-zero element of $S$ belongs to some of them.

\begin{lemma}\label{lem:ultra} Let $a\in S$, $a\neq 0$, and let $F$ be a BA-prime filter of $S/\D$, such that $D_a\in F$ (it exists by the classical Stone duality). Then the set
$$
X_{a,F}=\{s\in S: s\geq t \text{ for some } t\in \alpha^{-1}(F) \text{ such that } t\leq a\}
$$
is a SBA-prime filter of $S$, contained in ${\mathcal U}_F$ and containing $a$.
\end{lemma}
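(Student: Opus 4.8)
The plan is to verify, in order, that $a\in X_{a,F}$, that $X_{a,F}$ is a filter of $S$ with $\alpha(X_{a,F})=F$ (so that $X_{a,F}\in\mathcal{PU}_F$), and finally that $X_{a,F}$ is a minimal element of $\mathcal{PU}_F$; the last assertion is precisely the statement that $X_{a,F}$ is an SBA-prime filter, and combined with $\alpha(X_{a,F})=F$ it places $X_{a,F}$ in $\mathcal{U}_F$. That $a\in X_{a,F}$ follows by taking $t=a$. The up-set axiom for $X_{a,F}$ is just transitivity of $\leq$; for closure under $\wedge$, given $s_i\geq t_i$ with $t_i\leq a$ and $\alpha(t_i)\in F$ ($i=1,2$), I would put $t=t_1\wedge t_2$, the meet computed inside the Boolean lattice $\lceil a\rceil$: then $t\leq a$, $\alpha(t)=\alpha(t_1)\wedge\alpha(t_2)\in F$, and since $t\leq t_i\leq s_i$ for each $i$, a common lower bound lying below the meet gives $t\leq s_1\wedge s_2$, whence $s_1\wedge s_2\in X_{a,F}$. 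The inclusion $\alpha(X_{a,F})\subseteq F$ holds because $s\geq t$ with $\alpha(t)\in F$ forces $\alpha(s)\in F$; for the reverse inclusion, given $D\in F$ I would lift $D\wedge D_a\in F$ to some $c\leq a$ using the isomorphism $\lceil a\rceil\cong\lceil\alpha(a)\rceil$ recorded after Lemma \ref{lem:unique}, pick $d$ with $\alpha(d)=D$, and observe via Lemma \ref{lem:unique} that $d\vee c\geq c$ and $\alpha(d\vee c)=D$, so $d\vee c\in X_{a,F}$. Hence $X_{a,F}\in\mathcal{PU}_F$.

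The substantive point is minimality. Let $V\in\mathcal{PU}_F$ with $V\subseteq X_{a,F}$; the goal is $X_{a,F}\subseteq V$. If $V=\alpha^{-1}(F)$ this is immediate since $X_{a,F}\subseteq\alpha^{-1}(F)$, so assume $V\neq\alpha^{-1}(F)$, in which case $\alpha^{-1}(F)\setminus V$ is a filter by Lemma \ref{lem:sym}. The first observation is that $V$ is downward closed within $\alpha^{-1}(F)$: if $y\in V$ and $x\in\alpha^{-1}(F)$ with $x\leq y$, then $x\in V$, for otherwise $x\in\alpha^{-1}(F)\setminus V$, and that set being an up-set forces $y\in\alpha^{-1}(F)\setminus V$, contradicting $y\in V$. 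The second observation is that $V$ contains an element below $a$: choosing any $v\in V\subseteq X_{a,F}$, there is $u$ with $u\leq a$, $u\leq v$ and $\alpha(u)\in F$, and then $u\in V$ by the first observation. Now for an arbitrary $t$ with $t\leq a$ and $\alpha(t)\in F$, I would form $w=u\vee t$ in the Boolean lattice $\lceil a\rceil$: then $u\leq w$, $t\leq w$, $w\leq a$, and $\alpha(w)\in F$, so $w\in V$ (being above $u\in V$), and then $t\in V$ by the first observation. Finally, every $s\in X_{a,F}$ dominates some such $t$, so $s\in V$; thus $X_{a,F}\subseteq V$ and $V=X_{a,F}$.

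The main obstacle is this minimality argument. The verification that $X_{a,F}$ is a preprime filter is routine skew-lattice bookkeeping, resting on the fact that each $\lceil c\rceil$ is a genuine Boolean lattice and on the isomorphism $\lceil c\rceil\cong\lceil\alpha(c)\rceil$; the real content is the minimality, which hinges on combining two less evident facts — that such a $V$ is downward closed inside $\alpha^{-1}(F)$ (extracted from Lemma \ref{lem:sym}) and that $V$ must contain some element below $a$ whose $\D$-class lies in $F$ — and then gluing them together by forming joins inside the Boolean lattice $\lceil a\rceil$.
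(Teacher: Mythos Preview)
Your proof is correct. The verification that $X_{a,F}\in\mathcal{PU}_F$ is essentially the same as the paper's, with only cosmetic differences (you phrase closure under $\wedge$ via ``common lower bounds lie below the meet'', while the paper writes out the computation $(c\wedge d)\wedge(t_1\wedge t_2)=t_1\wedge t_2$ directly using commutativity of $t_1,t_2$ in $\lceil a\rceil$).

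The minimality argument, however, is genuinely different. The paper never invokes Lemma~\ref{lem:sym}; instead it works entirely with Lemma~\ref{lem:unique}. Given $U\subseteq X_{a,F}$ in $\mathcal{PU}_F$, the paper picks $b\in U\cap D_a$ and $s\in U\cap D_t$ (for the witness $t\leq a$ with $b\geq t$), notes $b\wedge s\in U$ lies below $b$ in $D_t$, and concludes $b\wedge s=t$ by uniqueness, so $t\in U$ and hence $a\in U$. Then for any $t\leq a$ with $\alpha(t)\in F$, picking $y\in U\cap D_t$ gives $a\wedge y\in U$ with $a\geq a\wedge y$, and uniqueness forces $a\wedge y=t$, so $t\in U$. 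Your route instead extracts from Lemma~\ref{lem:sym} that $U$ is downward closed inside $\alpha^{-1}(F)$, pulls an element of $U$ below $a$, and then climbs around inside the Boolean lattice $\lceil a\rceil$ via joins. Your approach is arguably more conceptual (it isolates the downward-closure principle once and reuses it), while the paper's is more self-contained (it needs only Lemma~\ref{lem:unique}, not the stronger Lemma~\ref{lem:sym}). Both are short and either would be acceptable.
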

\begin{proof}
Show that $X_{a,F}\subseteq \alpha^{-1}(F)$. Let $s\in X_{a,F}$ and $t\in \alpha^{-1}(F)$ be such that $t\leq a$ and $s\geq t$. Then $D_s\geq D_t$, and $D_t\in F$. It follows that $D_s\in F$.

 Now we verify that $X_{a,F}$  satisfies Conditions \ref{d1} and \ref{d2} of the definition of a filter. If $c\in X_{a,F}$ and $d\geq c$, then obviously $d\in X_{a,F}$. Suppose $c,d\in X_{a,F}$. Then there are some $t_1,t_2\in  \alpha^{-1}(F)$ such that $t_1\leq a$, $t_2\leq a$ and $c\geq t_1$, $d\geq t_2$. Since $\lceil a\rceil$ is a Boolean algebra and $t_1,t_2\in \lceil a\rceil$, it follows that $t_1\wedge t_2=t_2\wedge t_1$.
Then we have
\begin{multline*}
(c\wedge d)\wedge (t_1\wedge t_2)=(c\wedge d)\wedge (t_2\wedge t_1)=c\wedge (d\wedge t_2)\wedge t_1= \\
c\wedge t_2\wedge t_1=c\wedge t_1\wedge t_2=t_1\wedge t_2
\end{multline*}
and similarly $(t_1\wedge t_2)\wedge(c\wedge d)=t_1\wedge t_2$. Hence, $c\wedge d\geq t_1\wedge t_2$. Since $t_1\leq a$ and $t_2\leq a$, it follows that $t_1\wedge t_2\leq a$. Since $t_1\in  \alpha^{-1}(F)$ and $t_2\in  \alpha^{-1}(F)$, we have $t_1\wedge t_2\in  \alpha^{-1}(F)$. This shows that $c\wedge d\in X_{a,F}$.

Now we show that $X_{a,F}$ has nonempty intersections with all $\D$-classes of $F$. Let $D$ be a $\D$-class of $F$. Consider the $\D$-class $D'=D_a\wedge D$. We have $D'\in F$ and $D_a\geq D'$. Let $c\in D'$. Then $a\wedge c\in \alpha^{-1}(F)$, $a\wedge c\in D_a\wedge D_c=D_c=D'$and $a\geq a\wedge c$. Let $d\in D$. Since $D\geq D'$ it follows from Lemma \ref{lem:unique} that $d\vee(a\wedge c)\geq a\wedge c$. This implies that $d\vee (a\wedge c)\in X_{a,F}\cap D$, and therefore $X_{a,F}\cap D\neq\varnothing$. Hence $X_{a,F}\in {\mathcal{PU}}_F$.

Finally, we show that $X_{a,F}\in {\mathcal U}_F$. Assume $U\in {\mathcal PU}_F$ be such that $U\subseteq X_{a,F}$. Let $b\in U\cap D_a$. Since $b\in X_{a,F}$, there exists $t\leq a$, $t\in \alpha^{-1}(F)$, such that $b\geq t$. Let $s\in U\cap D_t$. Then $b\wedge s\in U\cap D_t$ and $b\geq b\wedge s$. By Lemma \ref{lem:unique} it follows that $b\wedge s=t$. This implies $t\in U$, and hence $a\in U$.

Let $t\leq a$ and $t\in \alpha^{-1}(F)$. Show that $t\in U$. Let $y$ be such that $y\in U\cap D_t$. Then $a\wedge y\in U$ and $a\geq a\wedge y$. By Lemma \ref{lem:unique}, $a\wedge y=t$, which implies that $t\in U$, as required. Now it follows that $X_{a,F}\subseteq U$. Then we obtain  $X_{a,F}= U$, implying $X_{a,F}\in {\mathcal U}_F$.
\end{proof}

In the following lemma we establish the property of SBA-prime filters, which is crucial for the construction of the dual \'{e}tale space, which will follow.

\begin{lemma}\label{prop_ultra} Let $F$ be a BA-prime filter of $S/\D$ and $U_1,U_2\in  {\mathcal U_F}$. Then either $U_1=U_2$ or $U_1\cap U_2=\varnothing$.
\end{lemma}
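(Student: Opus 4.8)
The plan is to argue by contradiction: assume $U_1 \neq U_2$ and yet $U_1 \cap U_2 \neq \varnothing$, and produce from $U_1 \cap U_2$ a preprime filter in $\mathcal{PU}_F$ that is strictly contained in $U_1$ (or in $U_2$), contradicting minimality. The natural candidate is $U = U_1 \cap U_2$ itself. First I would check that $U$ is a filter: closure upward is immediate since both $U_1$ and $U_2$ are upward closed, and closure under $\wedge$ is immediate since if $a,b\in U_1\cap U_2$ then $a\wedge b\in U_1$ and $a\wedge b\in U_2$. Since $U_1,U_2\subseteq\alpha^{-1}(F)$, also $U\subseteq\alpha^{-1}(F)$.

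The main content is showing $\alpha(U) = F$, i.e. that $U$ meets every $\D$-class $D$ of $F$. This is the step I expect to be the main obstacle, since naively $U_1$ and $U_2$ could meet a given $\D$-class $D$ in disjoint singletons (recall by Lemma~\ref{lem:unique} and flatness each $U_i$ meets each $\D$-class of $F$ in exactly one element), so $U_1\cap D$ and $U_2\cap D$ need not intersect. The key observation is that the hypothesis $U_1\cap U_2\neq\varnothing$ forces agreement not just at one $\D$-class but everywhere. Pick $c\in U_1\cap U_2$, say $c\in D_c$ with $D_c\in F$. Given any $\D$-class $D$ of $F$, let $a_i$ be the unique element of $U_i\cap D$ for $i=1,2$. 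I want to show $a_1=a_2$. Consider the $\D$-class $D_c\wedge D\in F$: by Lemma~\ref{lem:unique}, $c\wedge a_i$ lies in $D_c\wedge D$, lies in $U_i$ (as $c\wedge a_i \leq a_i$... rather $c \geq c\wedge a_i$, so upward closure gives $c\wedge a_i\in U_i$ only if $c\wedge a_i$ is below something in $U_i$ — in fact $c\wedge a_i\leq c\in U_i$ is the wrong direction; instead use $a_i\wedge c\in U_i$ since $a_i\geq a_i\wedge c$). So $a_i\wedge c$ is the unique element of $U_i\cap(D\wedge D_c)$; but $a_1\wedge c$ and $a_2\wedge c$ both lie in $\lceil c\rceil$ together with being in the same $\D$-class $D\wedge D_c$, and by Lemma~\ref{lem:unique} applied inside $\lceil c\rceil$ (or directly, since elements of a Boolean lattice that are $\D$-related are equal in a left-handed algebra — indeed $\D$ restricted to $\lceil c\rceil$ is trivial), we get $a_1\wedge c=a_2\wedge c$, call it $t$. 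Then $t\in U_1\cap U_2$ and $t\leq c$.

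To finish, I would reconstruct $a_i$ from $t$: we have $t\leq a_i$? No — rather $D_{a_i}=D\geq D\wedge D_c = D_t$, and $a_i\wedge t = ?$. Since $t\leq c$ we do not directly get $t\leq a_i$. A cleaner route: use $a_i\vee t$. By Lemma~\ref{lem:unique}, since $D_{a_i}=D\geq D_t$, we have $a_i\vee t\geq t$ and moreover $a_i\vee t\in D$ (as $D\vee D_t=D$ when $D\geq D_t$); actually I should show $a_i\vee t = a_i$. We have $a_i\wedge c = t$, so $t\leq a_i$ fails in general, but $a_i\wedge t$: compute $a_i\wedge t = a_i\wedge(a_i\wedge c)=a_i\wedge c = t$ and $t\wedge a_i = (a_i\wedge c)\wedge a_i$; since $a_i\wedge c, a_i\in\lceil a_i\rceil$ they commute, so $t\wedge a_i = a_i\wedge t = t$, whence $t\leq a_i$. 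Therefore $t\leq a_1$ and $t\leq a_2$ with $a_1,a_2\in D_t\vee\cdots$; now $a_1,a_2\in D$ and both $\geq t$, so by Lemma~\ref{lem:unique} (the moreover clause, taking $x$ to be any common upper bound, or directly: elements of $D$ above a fixed $t$ with $D_t\leq D$ are unique) we conclude $a_1=a_2\in U_1\cap U_2=U$. Hence $U\cap D\neq\varnothing$ for every $\D$-class $D$ of $F$, so $\alpha(U)=F$ and $U\in\mathcal{PU}_F$. Since $U\subseteq U_1$ and $U_1\in\mathcal{U}_F$ is minimal in $\mathcal{PU}_F$, we get $U=U_1$, and symmetrically $U=U_2$, so $U_1=U_2$, contradicting our assumption. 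The delicate point throughout is the repeated use of Lemma~\ref{lem:unique} together with the fact that $\D$ is trivial on each principal subalgebra $\lceil c\rceil$, which is what collapses the a priori freedom in how the two prime filters meet each $\D$-class.
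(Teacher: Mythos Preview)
Your overall strategy coincides with the paper's: show $U=U_1\cap U_2\in\mathcal{PU}_F$ and invoke minimality. The filter axioms and $U\subseteq\alpha^{-1}(F)$ are indeed immediate. However, your execution of the crucial step --- showing $U$ meets every $\D$-class $D$ of $F$ --- has genuine gaps.

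First, the elements $a_i\wedge c$ do \emph{not} lie in $\lceil c\rceil$. By left-handedness $c\wedge(a_i\wedge c)=c\wedge a_i\wedge c=c\wedge a_i$, which need not equal $a_i\wedge c$ since $a_i$ and $c$ need not commute. It is $c\wedge a_i$ that lies in $\lceil c\rceil$. So your justification for $a_1\wedge c=a_2\wedge c$ collapses; you switched sides precisely where the side matters.

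Second, and more seriously, the final step misreads Lemma~\ref{lem:unique}. That lemma gives \emph{downward} uniqueness: for fixed $x$ with $D_x\geq D_y$, there is at most one $z\in D_y$ with $z\leq x$. You invoke the opposite direction --- that for fixed $t$ with $D_t\leq D$, elements of $D$ above $t$ are unique. This is false. For a concrete example in the dual picture, take $X=\{x,y\}$ with stalks $E_x=\{p\}$ and $E_y=\{a,b\}$: the two global sections are $\D$-related in $E^\star$ and both lie above the section $\{p\}$. (Relatedly, prime filters need not meet each $\D$-class in a singleton, so your parenthetical ``unique $a_i$'' is also unjustified.)

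The paper avoids trying to prove $a_1=a_2$ altogether. With $c$ on the left it gets $c\wedge a=c\wedge b$ (correctly, via Lemma~\ref{lem:unique} inside $\lceil c\rceil$), then sets $x=a\setminus(a\wedge c)$ and $y=(c\wedge a)\vee x$, and verifies directly that $y\,\D\,a$ (so $y\in D$) and $y\geq c\wedge a=c\wedge b$. Since $c,a\in U_1$ and $c,b\in U_2$ and filters are closed under $\wedge$, one has $c\wedge a=c\wedge b\in U_1\cap U_2$, whence $y\in(U_1\cap U_2)\cap D$ by upward closure. The relative-complement construction is precisely what your argument is missing: it manufactures an element of $D$ above $c\wedge a$ without appealing to any (nonexistent) upward-uniqueness.
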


\begin{proof} Assume $U_1,U_2\in  {\mathcal U_F}$ are such that $U_1\cap U_2\neq\varnothing$. First we show that $U_1\cap U_2\in {\mathcal{PU}}_F$. It is clear that $U_1\cap U_2$ is a filter. So it is enough to demonstrate that $\alpha(U_1\cap U_2) =F$. Since $\alpha(U_1)=F$ and  $\alpha(U_2)=F$, it follows that $\alpha(U_1\cap U_2)\subseteq F$. Let $D$ be a $\D$-class of $F$. We aim to show that $D$ has a non-empty intersection with $U_1\cap U_2$. Fix $a\in U_1\cap D$ and $b\in U_2\cap D$ (such $a$ and $b$ exist since $\alpha(U_1)=F$ and $\alpha(U_2)=F$). Fix also $c\in U_1\cap U_2$ ($c$ exists since $U_1\cap U_2\neq\varnothing$ by the assumption). Consider the elements $c\wedge a$ and $c\wedge b$. Since $a\D b$ and since $\D$ is a congruence, it follows that $(c\wedge a)\D (c\wedge b)$. Since $c\geq c\wedge a$, $c\geq c\wedge b$ and $(c\wedge a)\D (c\wedge b)$, it follows from Lemma \ref{lem:unique} that $c\wedge a=c\wedge b$.
Since $\D$ is a congruence, then $(a\wedge c)\D (c\wedge a)$
  and $(b\wedge c)\D (c\wedge b)$. Hence, the three elements $c\wedge a=c\wedge b$, $a\wedge c$ and $b\wedge c$ lie in the same $\D$-class. Let $x=a\setminus (a\wedge c)$. Since $a\geq a\wedge c$, it follows that $x$ is the complement of $a\wedge c$ in the Boolean algebra $\lceil a\rceil$, that is, $(a\wedge c)\wedge x=x\wedge (a\wedge c)=0$ and $(a\wedge c)\vee x= x\vee (a\wedge c)=a$. Set $y=(c\wedge a)\vee x$. Since $(c\wedge a)\D (a\wedge c)$, then $y\D ((a\wedge c)\vee x)$, that is $y\D a$. Applying distributivity, idempotency, the property of $0$ and absorption, we see that
$$
y\wedge (c\wedge a)=((c\wedge a)\vee x)\wedge (c\wedge a)=((c\wedge a)\wedge(c\wedge a))\vee (x\wedge(c\wedge a))=(c\wedge a)\vee 0= c\wedge a;
$$
$$
(c\wedge a)\wedge y=(c\wedge a)\wedge ((c\wedge a)\vee x)=c\wedge a.
$$
So, $y\geq c\wedge a=c\wedge b\in U_1\cap U_2$. It follows that $y\in U_1\cap U_2$. Therefore, $y\in (U_1\cap U_2)\cap D$, and thus $(U_1\cap U_2)\cap D\neq \varnothing$, as required.
\end{proof}

\begin{corollary}\label{cor:union}
Let $F$ be a BA-prime filter of $S/\D$. Than $\alpha^{-1}(F)$ is a disjoint union $\alpha^{-1}(F)=\dot\cup_{U\in {\mathcal U}_F}U$. Moreover, any preprime filter $U\subseteq \PU(F)$ is a disjoint union of prime filters from $\U_F$, contained in $U$.
\end{corollary}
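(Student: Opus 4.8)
The plan is to derive both assertions from Lemmas \ref{lem:unique}, \ref{lem:ultra} and \ref{prop_ultra}, after the observation that the first assertion is the special case $U=\alpha^{-1}(F)$ of the second: indeed $\alpha^{-1}(F)$ is a filter with $\alpha\bigl(\alpha^{-1}(F)\bigr)=F$ (since $\alpha$ is onto), so $\alpha^{-1}(F)\in\PU_F$. It therefore suffices to treat an arbitrary preprime filter $U\in\PU_F$.

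Disjointness is immediate from Lemma \ref{prop_ultra}: any two distinct members of $\U_F$ intersect trivially, so the unions in both displayed equalities are automatically disjoint. The substance is to show that $U$ is covered by the members of $\U_F$ it contains, i.e.\ that every $a\in U$ lies in some $V\in\U_F$ with $V\subseteq U$.

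Fix $a\in U$. First I would note that $a\neq 0$: since $U\subseteq\alpha^{-1}(F)$ we have $D_a=\alpha(a)\in F$, and $F$, being a BA-prime filter, does not contain the zero of $S/\D$. Hence Lemma \ref{lem:ultra} applies and produces $X_{a,F}\in\U_F$ with $a\in X_{a,F}$. The key point is that $X_{a,F}\subseteq U$. As $X_{a,F}$ is the upward closure of $\{t\in\alpha^{-1}(F):t\leq a\}$ and $U$ is upward closed, it is enough to prove $\{t\in\alpha^{-1}(F):t\leq a\}\subseteq U$. Given such a $t$, we have $\alpha(t)\in F=\alpha(U)$, so pick $u\in U$ with $\alpha(u)=\alpha(t)$. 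Then $a\wedge u\in U$ (as $U$ is $\wedge$-closed), $a\geq a\wedge u$, and $\alpha(a\wedge u)=D_a\wedge\alpha(u)=\alpha(t)$ because $\alpha(t)\leq D_a$; thus $a\wedge u\in D_t$. Applying Lemma \ref{lem:unique} with $x=a$, $y=t$, $z=a\wedge u$ (legitimate, since $D_a\geq D_t$ and $a\geq a\wedge u$) gives $a\wedge u=a\wedge t=t$, whence $t\in U$. This proves $X_{a,F}\subseteq U$, so letting $a$ range over $U$ yields $U=\bigcup\{V\in\U_F:V\subseteq U\}$, disjointly. Specializing to $U=\alpha^{-1}(F)$ gives the first assertion and, incidentally, shows that every nonzero element of $\alpha^{-1}(F)$ lies in a unique member of $\U_F$.

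The only delicate step is the inclusion $X_{a,F}\subseteq U$: a priori $X_{a,F}$ is assembled from all of $\alpha^{-1}(F)$, not just from $U$, so one must exploit that inside the Boolean lattice $\lceil a\rceil$ membership in $U$ and membership in $\alpha^{-1}(F)$ coincide. This is exactly what the $\alpha$-preimage trick together with the uniqueness clause of Lemma \ref{lem:unique} supplies; everything else is bookkeeping.
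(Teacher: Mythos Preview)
Your proof is correct and follows the same route the paper intends: disjointness from Lemma~\ref{prop_ultra}, covering via the filters $X_{a,F}$ from Lemma~\ref{lem:ultra}. The one substantive step you supply, the inclusion $X_{a,F}\subseteq U$ for any preprime filter $U\ni a$, is in fact already established verbatim in the final paragraph of the proof of Lemma~\ref{lem:ultra} (there it is used to verify minimality of $X_{a,F}$), so you could simply cite that rather than rerun the Lemma~\ref{lem:unique} argument; otherwise your write-up just makes explicit what the paper's one-line proof leaves to the reader.
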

\begin{proof} The statements follow from Lemma~\ref{prop_ultra} and Lemma~\ref{lem:ultra}.
\end{proof}

\begin{corollary}\label{cor:new} Let $F$ be a BA-prime filter of $S/\D$ and $U\in\U_F$. Then $U=X_{a,F}$ for any $a\in U$.
\end{corollary}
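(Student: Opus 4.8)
The plan is to deduce this directly from Lemma~\ref{lem:ultra} and Lemma~\ref{prop_ultra}, so the argument will be short. First I would check that $X_{a,F}$ is even defined for the given $a$: since $U\in\U_F$ is in particular a preprime filter with $\alpha(U)=F$, we have $U\subseteq\alpha^{-1}(F)$, hence $a\in\alpha^{-1}(F)$, so $D_a=\alpha(a)\in F$; and since $0\notin F$ (as $F$ is a proper filter of the generalized Boolean algebra $S/\D$), $a\in\alpha^{-1}(F)$ forces $a\neq 0$. Thus the hypotheses of Lemma~\ref{lem:ultra} are met, and that lemma gives $X_{a,F}\in\U_F$ with $a\in X_{a,F}$.

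Next I would observe that $a\in X_{a,F}\cap U$, so $X_{a,F}\cap U\neq\varnothing$. Now both $X_{a,F}$ and $U$ lie in $\U_F$, so Lemma~\ref{prop_ultra} applies: any two members of $\U_F$ are either equal or disjoint. Since they are not disjoint, $X_{a,F}=U$, which is exactly the claim.

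I do not expect any real obstacle here; the only point that needs a line of care is the well-definedness of $X_{a,F}$, i.e. verifying $a\neq 0$ and $D_a\in F$ from $a\in U\in\U_F$, and this is immediate from $U\subseteq\alpha^{-1}(F)$ together with $0\notin F$. Everything else is a direct invocation of the two cited lemmas.
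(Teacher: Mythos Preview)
Your proof is correct and uses exactly the same ingredients as the paper's proof, namely Lemma~\ref{lem:ultra} (to get $X_{a,F}\in\U_F$ with $a\in X_{a,F}$) and Lemma~\ref{prop_ultra} (to conclude from nonempty intersection that $U=X_{a,F}$). In fact your argument is slightly more direct: the paper first observes that the sets $X_{a,F}$ cover $\alpha^{-1}(F)$, deduces $U=X_{a,F}$ for \emph{some} $a\in U$, and then argues separately that $X_{b,F}=X_{a,F}$ for every other $b\in U$; you cut straight to $a\in U\cap X_{a,F}$ for the given $a$ and invoke Lemma~\ref{prop_ultra} once.
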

\begin{proof} Let $a\in \alpha^{-1}(F)$. Since $a\in X_{a,F}$, it follows that $\alpha^{-1}(F)$ is covered by the sets $X_{a,F}$. Therefore, $U=X_{a,F}$ for some $a\in U$. Let $b\in X_{a,F}$. Then $X_{a,F}\cap X_{b,F}\neq\varnothing$, which in view of Lemma~\ref{prop_ultra} and Lemma~\ref{lem:ultra} implies that $X_{a,F}=X_{b,F}$.
\end{proof}

\begin{lemma}\label{lem:preim} Let $S_1,S_2$ be skew Boolean algebras, $k:S_1\to S_2$ be a proper skew Boolean algebra homomorphism and $\overline{k}:S_1/\D\to S_2/\D$ be the induced proper homomorphism of generalized Boolean algebras. Let, further, $U$ be a preprime filter of $S_2$ and $F$ be a BA-prime filter of $S_2/\D$ such that $U\in \PU_F$. Then either $k^{-1}(U)=\varnothing$ or $k^{-1}(U)\in \PU_{\overline{k}^{-1}(F)}$.
\end{lemma}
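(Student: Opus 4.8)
The plan is to show that if $k^{-1}(U)\neq\varnothing$ then $k^{-1}(U)$ is a preprime filter of $S_1$ with $\alpha_1(k^{-1}(U))=\overline{k}^{-1}(F)$, where $\alpha_1\colon S_1\to S_1/\D$ and $\alpha_2\colon S_2\to S_2/\D$ are the canonical projections; then $k^{-1}(U)\subseteq\alpha_1^{-1}(\overline{k}^{-1}(F))$, so $k^{-1}(U)\in\PU_{\overline{k}^{-1}(F)}$, and $\overline{k}^{-1}(F)$ is a BA-prime filter of $S_1/\D$ by the classical Stone duality applied to the proper homomorphism $\overline{k}$ (this is the only place properness of $k$ enters). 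The fact that $k^{-1}(U)$ is a filter is routine, since a homomorphism preserves $\leq$ and $\wedge$ and $U$ is a filter; and the inclusion $\alpha_1(k^{-1}(U))\subseteq\overline{k}^{-1}(F)$ is immediate from $\alpha_2 k=\overline{k}\,\alpha_1$ (Lemma \ref{lem:induced}) together with $U\subseteq\alpha_2^{-1}(F)$: if $a\in k^{-1}(U)$ then $\overline{k}(\alpha_1(a))=\alpha_2(k(a))\in F$.

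The substance of the argument is the reverse inclusion $\overline{k}^{-1}(F)\subseteq\alpha_1(k^{-1}(U))$. Fix $c\in k^{-1}(U)$; by the above, $\alpha_1(c)\in\overline{k}^{-1}(F)$. Given a $\D$-class $D\in\overline{k}^{-1}(F)$, set $D':=D\wedge\alpha_1(c)$, which again lies in the filter $\overline{k}^{-1}(F)$ and satisfies $D'\leq\alpha_1(c)$ and $D'\leq D$. Using the Boolean-lattice isomorphism $\lceil c\rceil\cong\lceil\alpha_1(c)\rceil$, $a\mapsto\alpha_1(a)$ (recorded after Lemma \ref{lem:unique}), pull $D'$ back to the element $c'\leq c$ with $\alpha_1(c')=D'$. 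Then $k(c')\leq k(c)$ and $\alpha_2(k(c'))=\overline{k}(D')\in F$, so $k(c')\in\alpha_2^{-1}(F)$. The claim is that $k(c')\in U$: the element $k(c)\in U$ lies in some SBA-prime filter $W\in\U_F$ with $W\subseteq U$ (Corollary \ref{cor:union}), and $W=X_{k(c),F}$ by Corollary \ref{cor:new}; taking $t=k(c')$ in the definition of $X_{k(c),F}$ (using $k(c')\leq k(c)$ and $k(c')\in\alpha_2^{-1}(F)$) gives $k(c')\in W\subseteq U$. Hence $c'\in k^{-1}(U)$. Finally, for any $d\in D$ we have $d\vee c'\geq c'$ by Lemma \ref{lem:unique} (since $D_d=D\geq D'=D_{c'}$), so $d\vee c'\in k^{-1}(U)$ by upward closure, while $\alpha_1(d\vee c')=D\vee D'=D$; thus $D\in\alpha_1(k^{-1}(U))$, as required.

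I expect the main obstacle to be the claim $k(c')\in U$: since $k(c')$ lies \emph{below} $k(c)\in U$, upward closure of $U$ is useless here, and one must invoke the structural description of preprime filters as disjoint unions of SBA-prime filters of the form $X_{a,F}$ (Corollaries \ref{cor:union} and \ref{cor:new}). An alternative route that bypasses this uses Lemma \ref{lem:hom1}: writing $U=g^{-1}(1)$ for a homomorphism $g\colon S_2\to{\bf 3}$, the composite $gk\colon S_1\to{\bf 3}$ satisfies $(gk)^{-1}(1)=k^{-1}(U)$, so when this is nonempty Lemma \ref{lem:hom1} makes it a preprime filter, and the associated BA-prime filter is $\overline{gk}^{-1}(1)=(\overline{g}\,\overline{k})^{-1}(1)=\overline{k}^{-1}(\overline{g}^{-1}(1))=\overline{k}^{-1}(F)$, using $\overline{gk}=\overline{g}\,\overline{k}$ (from Lemma \ref{lem:induced} and surjectivity of $\alpha_1$) and $\overline{g}^{-1}(1)=F$ (Lemma \ref{cor:hom} applied to $g$). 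Either way, the concluding observation is that $\alpha_1(k^{-1}(U))=\overline{k}^{-1}(F)$ forces $k^{-1}(U)\subseteq\alpha_1^{-1}(\overline{k}^{-1}(F))$, hence $k^{-1}(U)\in\PU_{\overline{k}^{-1}(F)}$.
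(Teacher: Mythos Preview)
Your proposal is correct. Both the direct verification and the alternative route via Lemma~\ref{lem:hom1} go through as written.

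The paper, however, takes precisely what you call the ``alternative route'': it writes $U=f^{-1}(1)$ for a homomorphism $f:S_2\to{\bf 3}$ (Lemma~\ref{lem:hom1}), observes that $k^{-1}(U)=(fk)^{-1}(1)$, and applies Lemma~\ref{lem:hom1} again together with the compatibility $\alpha_2 k=\overline{k}\,\alpha_1$ to conclude $k^{-1}(U)\in\PU_{\overline{k}^{-1}(F)}$. That is the entire proof---three lines. Your main argument instead unpacks the definition of a preprime filter and proves the surjectivity $\alpha_1(k^{-1}(U))\supseteq\overline{k}^{-1}(F)$ by hand, invoking the structural description of preprime filters as unions of prime filters $X_{a,F}$ (Corollaries~\ref{cor:union} and~\ref{cor:new}) to push the element $k(c')$ downward into $U$. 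This works, but it re-derives by explicit computation exactly the content that Lemma~\ref{lem:hom1} already packages: the point of that lemma is that ``preprime filter'' and ``preimage of $1$ under a map to ${\bf 3}$'' are interchangeable, and preimages compose trivially. Your direct route has the virtue of making visible which element of each $\D$-class lands in $k^{-1}(U)$, but at the cost of reproving what the paper has set up Lemma~\ref{lem:hom1} to do. Since you already identified the short argument, you might simply lead with it.
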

\begin{proof} Suppose $k^{-1}(U)\neq\varnothing$. Let $f:S_2\to {\bf 3}$ be the homomorphism such that $U=f^{-1}(1)$ (such an $f$ exists by Lemma \ref{lem:hom1}). Then $fk$ is a homomorphism from $S_1$ to {\bf 3} and $k^{-1}(U)=(fk)^{-1}(1)$.  Then by Lemma~\ref{lem:hom1}, $k^{-1}(U)$ is a preprime filter of $S_1$. Since $k$ and $\overline{k}$ are agreed with projections, it follows that $k^{-1}(U)\in \PU_{\overline{k}^{-1}(F)}$.
\end{proof}

Let $S^{\star}$ be the set of all SBA-prime filters of $S$. We call it the {\em spectrum} of $S$. Let $f:S^{\star}\to (S/\D)^{\star}$ be the map, given by $U\mapsto F$, whenever $U\in\U_F$, where $F\in (S/\D)^{\star}$. Obviously, $f$ is surjective. In the classical Stone duality it is proved that $(S/\D)^{\star}$ is a locally compact Boolean space, whose base constitute the sets
\begin{equation}\label{m_prime}
M'(A)=\{F:  F \text{ is a BA-prime filter of } S/\D \text{ and } A\in F\},
\end{equation}
where $A$ runs through $S/\D$. Moreover, all the sets $M'(A)$ are compact clopen, and every compact clopen set coincides with some $M'(A)$.

Let $a\in S$. We define the set
\begin{equation}\label{def_ma}
M(a)=\{F: F \text{ is a SBA-prime filter of } S \text{ and } a\in F\}.
\end{equation}

Let $a\in S$, $a\neq 0$, and $F$ be a BA-prime filter of $S/\D$ such that $\alpha(a)\in F$. Then by Lemma \ref{lem:ultra}, $a\in X_{a,F}$. Moreover, $X_{a,F}$ is a unique  SBA-prime filter, contained in ${\mathcal U}_F$ and containing $a$, as follows from Lemma~\ref{prop_ultra}. Thus we can define a map $s_a: M'(\alpha(a))\to  M(a)$ by setting $F\mapsto X_{a,F}$, $F\in M'(\alpha(a))$. It is clear that $f(s_a(F))=F$ for any $F\in M'(\alpha(a))$.

Define a topology on $S^{\star}$ so that it is the coarsest topology under which all maps $s_a$ are open maps, that is the topology whose subbase constitute the sets $M(a)$, $a\in S$.

\begin{lemma} \label{lem:pi} $(S^{\star},f, (S/\D)^{\star})$ is an \'{e}tale space.
\end{lemma}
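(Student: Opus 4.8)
The plan is to verify the three defining ingredients of an étale space for the triple $(S^{\star}, f, (S/\D)^{\star})$: that $S^{\star}$ is a genuine topological space with the declared subbasic sets, that $f$ is continuous, and that $f$ is a surjective local homeomorphism. Surjectivity of $f$ has already been noted (every BA-prime filter $F$ admits the preprime filter $\alpha^{-1}(F)$, hence by Corollary~\ref{cor:union} some SBA-prime filter $U \in \U_F$). Continuity of $f$ is immediate once we observe that $f^{-1}(M'(A)) = \bigcup_{a \in \alpha^{-1}(A)} M(a)$: indeed, $U \in f^{-1}(M'(A))$ means $\alpha(U) = F$ with $A \in F$, equivalently $U$ contains some $a$ with $\alpha(a) = A$, equivalently $U \in M(a)$ for such an $a$. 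Since the $M'(A)$ form a subbase of $(S/\D)^{\star}$ and each $f^{-1}(M'(A))$ is open in $S^{\star}$, $f$ is continuous.

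The core of the argument is to show $f$ is a local homeomorphism, for which I would use the maps $s_a \colon M'(\alpha(a)) \to M(a)$, $F \mapsto X_{a,F}$, defined just above the statement. First I would check that $s_a$ is a bijection from $M'(\alpha(a))$ onto $M(a)$: it is onto because any $U \in M(a)$ lies in some $\U_F$ with $\alpha(a) \in F$, and then $U = X_{a,F}$ by Corollary~\ref{cor:new}, so $U = s_a(F)$; it is injective because $f \circ s_a = \mathrm{id}_{M'(\alpha(a))}$ (noted in the excerpt), so $s_a$ is a section of $f$ over $M'(\alpha(a))$ and in particular one-to-one. Consequently $f$ restricted to $M(a)$ is a continuous bijection onto $M'(\alpha(a))$ with continuous inverse $s_a$ — continuity of $s_a$ follows because $s_a^{-1}(M(b)) = \{F \in M'(\alpha(a)) : X_{a,F} \in M(b)\} = \{F : b \in X_{a,F}\}$, and I would show this set equals $M'(\alpha(a)) \cap M'(\alpha(a \wedge b \wedge a))$ (or a similar finite Boolean combination of basic clopen sets of $(S/\D)^{\star}$), hence is open; here Lemma~\ref{lem:unique} and the fact that $\lceil a \rceil \cong \lceil \alpha(a) \rceil$ let one translate "$b \geq t$ for some $t \leq a$ in $\alpha^{-1}(F)$" into a membership condition on $F$ purely in $S/\D$. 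Since the $M(a)$ cover $S^{\star}$ (Lemma~\ref{lem:ultra}: every nonzero element, and also $\varnothing \in M(0)$ trivially or one handles the zero filter separately) and each $M(a)$ is mapped homeomorphically by $f$ onto an open set, $f$ is a local homeomorphism.

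It remains to see that the declared subbase really generates a (reasonable) topology — this is automatic, any family of subsets is a subbase for some topology — and, if Hausdorffness of the total space or the local-homeomorphism-with-open-image property needs the subbasic sets $M(a)$ to themselves be open (they are, by definition of the topology) and to separate germs in different stalks (handled by $f$ being continuous into a Hausdorff space) and germs in the same stalk; the latter is exactly Lemma~\ref{prop_ultra}, which guarantees distinct $U_1, U_2 \in \U_F$ are disjoint as subsets of $S$, so if $U_1 \neq U_2$ pick $a \in U_1 \setminus U_2$ and then $U_1 \in M(a) \not\ni U_2$.

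I expect the main obstacle to be the precise computation showing that $s_a$ is an open map (equivalently, continuous as the inverse of $f|_{M(a)}$): one must pin down $\{F \in M'(\alpha(a)) : b \in X_{a,F}\}$ as an open — indeed compact clopen — subset of $(S/\D)^{\star}$, and this requires unwinding the definition of $X_{a,F}$ together with Lemma~\ref{lem:unique} to re-express the existential "$\exists\, t \leq a,\ t \in \alpha^{-1}(F),\ b \geq t$" in terms of the single element $\alpha(a \wedge b \wedge a) \in S/\D$. Everything else is bookkeeping with the already-established Lemmas~\ref{lem:ultra}, \ref{prop_ultra} and Corollaries~\ref{cor:union}, \ref{cor:new}.
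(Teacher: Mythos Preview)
Your overall strategy coincides with the paper's: exhibit each $M(a)$ as an open set mapped homeomorphically by $f$ onto $M'(\alpha(a))$, using $s_a$ as the inverse. The paper organizes the verification as ``$f\vert_{M(a)}$ is bijective, open, and continuous'' rather than your ``$f$ is globally continuous and $s_a$ is continuous'', but these are equivalent formulations of the same argument.

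There is, however, a genuine error in the step you flag as the main obstacle. The proposed identity $s_a^{-1}(M(b)) = M'(\alpha(a)) \cap M'(\alpha(a\wedge b\wedge a))$ is false, and no finite Boolean combination of basic sets will do in general. Take $S=\mathbf{3}$ with nonzero $\D$-class $\{a,b\}$: the unique BA-prime filter $F$ of $S/\D\cong\mathbf{2}$ satisfies $X_{a,F}=\{a\}$, so $b\notin X_{a,F}$ and $s_a^{-1}(M(b))=\varnothing$; yet $a\wedge b\wedge a=a$, so your formula gives $M'(\alpha(a))=\{F\}\neq\varnothing$. The correct description is
\[
s_a^{-1}(M(b))=\{F:\exists\,t\leq a,\ t\leq b,\ \alpha(t)\in F\}=\bigcup_{t\leq a,\;t\leq b} M'(\alpha(t)),
\]
which is open (so continuity of $s_a$ is salvaged) but typically not compact clopen, since $a$ and $b$ need not have a greatest common lower bound.

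The paper sidesteps this computation by working in the opposite direction: given a basic open $M'(B)$ of the base space with $B\leq\alpha(a)$, it picks the unique $b\leq a$ with $\alpha(b)=B$ and observes via Corollary~\ref{cor:new} that $X_{a,F}=X_{b,F}$ for $F\in M'(B)$, whence $(f\vert_{M(a)})^{-1}(M'(B))=M(b)$. Restricting to $b\leq a$ is exactly what makes the formula clean: for such $b$ the condition ``$b\in X_{a,F}$'' reduces to ``$\alpha(b)\in F$''. Since the sets $M'(B)$ with $B\leq\alpha(a)$ form a base of $M'(\alpha(a))$, this single identity delivers both continuity and openness of $f\vert_{M(a)}$ at once, without ever needing $s_a^{-1}(M(b))$ for arbitrary $b$.
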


\begin{proof}  Let $U\in S^{\star}$ and $a\in U$. Then $M(a)$ is an open neighborhood of $U$, and $f(M(a))=M'(\alpha(a))$. Show that the restriction of $f$ to $M(a)$ is a homeomorphism. Let $M(b)\subseteq M(a)$. By the definitions, this implies that $M'(\alpha(b))\subseteq M'(\alpha(a))$. Then $f(M(b))=M'(\alpha(b))$ is an open set in $(S/\D)^{\star}$, proving that the restriction of $f$ to $M(a)$ is an open map. Let $M'(B)\subseteq M'(\alpha(a))$. Then $B\leq \alpha(a)$ in $S/\D$. Let $b\in B$ be such that $b\leq a$. If $F\in M'(B)$, then $a\in X_{b,F}$, which implies $X_{a,F}=X_{b,F}$ by Corollary \ref{cor:new}. Therefore, $M(b)\subseteq M(a)$. It follows that $f^{-1}(M'(B))\cap M(a)=M(b)$, proving that the restriction of $f$ to $M(a)$ is a continuous map. Finally, the restriction of $f$ to $M(a)$ is a bijection, since $M(a)=\{X_{a,F}: F\in M(\alpha(a))\}$ and $f$ maps $X_{a,F}$ to $F$. Therefore, $f$ is a local homeomorphism.
\end{proof}

Call $S^{\star}=(S^{\star},f, (S/\D)^{\star})$ the {\em dual \'{e}tale space} to the skew Boolean algebra $S$.

\begin{lemma}\label{lem:last} Suppose $S$ is a skew boolean algebra with finite intersections. Then $S^{\star}$ is an \'{e}tale space with compact clopen equalizers.
\end{lemma}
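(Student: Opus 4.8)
### Proof plan

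The plan is to show that for any two compact clopen sections $A, B$ of $(S^\star, f, (S/\D)^\star)$, the set-theoretic intersection $A \cap B$ is again a section over a compact clopen subset of $(S/\D)^\star$. Since we already know from Lemma~\ref{lem:int} that once $A \cap B$ is a section it is automatically the intersection of $A$ and $B$ in $(S^\star)^\star$, establishing the ``section'' property is the whole task. Because the sets $M(a)$, $a \in S$, form a subbase for the topology and compact clopen sections are unions of finitely many of them, the first reduction will be to the case $A = M(a)$, $B = M(b)$ for $a, b \in S$; a quasi-intersection/gluing argument then lifts the conclusion to finite unions.

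So the crux is: given $a, b \in S$, describe $M(a) \cap M(b)$ explicitly. A point of $M(a) \cap M(b)$ is an SBA-prime filter $U$ with $a \in U$ and $b \in U$; by property~\ref{d2} of a filter, this forces $a \wedge b \in U$ (and also $b \wedge a \in U$), so $D_{a \wedge b} \in F$ where $F = f(U)$. Conversely, by Corollary~\ref{cor:new}, if $a, b \in U$ then $U = X_{a,F}$ and $U = X_{b,F}$; I expect to prove that $M(a) \cap M(b) = M(a \wedge b) \cap M(b \wedge a)$, or more precisely that $M(a) \cap M(b) = M(c)$ for a suitable $c$ built from $a$ and $b$ using finite intersections. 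The natural candidate is $c = a \cap b$, the intersection of $a$ and $b$ in $S$ (which exists since $S$ has finite intersections): since $a \cap b \leq a$ and $a \cap b \leq b$, we get $M(a \cap b) \subseteq M(a) \cap M(b)$ by upward closure; for the reverse inclusion, if $a, b \in U$ then $a \wedge b, b \wedge a \in U$, these lie in a common $\D$-class by Lemma~\ref{lem:connection}-type reasoning, and one checks $a \cap b$ (the meet in $\leq$) lies below both and belongs to $U$ because $U$ is a filter closed under $\wedge$ and downward-directed toward germs—here I would invoke Lemma~\ref{lem:unique} to identify $a \cap b$ with a concrete element of $U$. Thus $M(a) \cap M(b) = M(a \cap b)$, which is a section over the compact clopen set $M'(\alpha(a \cap b))$.

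For the reduction from subbasic to general compact clopen sections: a compact clopen section $A$ over a compact clopen $W \subseteq (S/\D)^\star$ can be written, using compactness of $W$ and that $f$ restricted to each $M(a)$ is a homeomorphism onto $M'(\alpha(a))$, as a finite quasi-union $A = M(a_1) \ucup \cdots \ucup M(a_n)$; similarly $B = M(b_1) \ucup \cdots \ucup M(b_m)$. Then $A \cap B$ decomposes over the refinement of the two covers of $W \cap (\text{domain of } B)$ into pieces each of which is $M(a_i) \cap M(b_j)$ restricted to a compact clopen set, hence a section by the subbasic case together with the fact (used already in Section~\ref{s:frometale}) that the restriction of a section to a compact clopen subset is a section and that finitely many compatible sections over compact clopen sets glue to a section over their compact clopen union.

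The main obstacle I anticipate is the subbasic identity $M(a) \cap M(b) = M(a \cap b)$: the inclusion requiring work is $M(a) \cap M(b) \subseteq M(a \cap b)$, i.e.\ showing that membership of $a$ and $b$ in an SBA-prime filter $U$ forces membership of their $\leq$-meet $a \cap b$. This is where left-handedness, the Boolean structure of the principal subalgebras $\lceil a \rceil$, and Lemma~\ref{lem:unique} must be combined carefully: one picks an element of $U$ in the $\D$-class of $a \cap b$ (which exists since $\alpha(U) = F$ is a filter and $D_{a \cap b} \in F$), dominates it appropriately by $a \wedge b$ and $b \wedge a$, and uses uniqueness of dominated elements in a $\D$-class to pin it down as $a \cap b$ itself, whence $a \cap b \in U$ by downward... actually upward closure applied to this element. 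The bookkeeping of which meets ($a \wedge b$ versus $b \wedge a$) one needs, and checking $a \cap b$ coincides with the relevant expression, is the delicate point; everything else is routine gluing.
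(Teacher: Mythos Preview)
Your plan is essentially the paper's: the entire proof there is the single displayed identity $M(a)\cap M(b)=M(a\cap b)$, from which the conclusion follows. The paper does not carry out your reduction-and-gluing step for arbitrary compact clopen sections; it tacitly relies on the fact (established later, when $\beta_S$ is shown to be onto) that every compact clopen section of $S^{\star}$ already has the form $M(a)$, so the subbasic case \emph{is} the general case. Your gluing argument is not wrong, merely superfluous once that is known.

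One remark on the inclusion you flag as delicate. Your sketch begins by picking an element of $U$ in the $\D$-class $D_{a\cap b}$, but at that stage you do not yet know $D_{a\cap b}\in F$ (indeed $D_{a\cap b}$ can lie strictly below $D_a\wedge D_b$ in $S/\D$), so the argument as written is circular. The direct route avoids this entirely: by Corollary~\ref{cor:new}, $a,b\in U\in\U_F$ gives $U=X_{a,F}$; then $b\in X_{a,F}$ supplies some $t\in\alpha^{-1}(F)$ with $t\le a$ and $t\le b$, hence $t\le a\cap b$ by the very definition of $\cap$, and therefore $a\cap b\in X_{a,F}=U$. No appeal to Lemma~\ref{lem:unique} or to elements of $D_{a\cap b}$ is needed.
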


\begin{proof} For $a,b\in S$ we have
$$M(a)\cap M(b)=\{F: F \text{ is a SBA-prime filter of } S \text{ and } a\cap b\in F\}=M(a\cap b),$$
where $a\cap b$ denotes the intersection of $a$ in $b$ in $S$. It follows that
the intersection $M(a)\cap M(b)$ is a section.
\end{proof}

Let $S,T$ be skew Boolean algebras and $k:T\to S$ be a proper skew Boolean algebra homomorphism. Let $\overline{k}:T/\D\to S/\D$ be the induced proper homomorphism of generalized Boolean algebras (it is well defined by Lemma \ref{lem:induced}).
By the classical Stone duality $\overline{k}^{-1}$ is a continuous proper map from $(S/\D)^{\star}$ to $(T/\D)^{\star}$. Denote $g=\overline{k}^{-1}$. Let $S^{\star}=(S^{\star},f_1, (S/\D)^{\star})$  and $T^{\star}=(T^{\star},f_2, (T/\D)^{\star})$ to be the corresponding dual \'{e}tale spaces. Let $F\in (S/\D)^{\star}$ and $V\in S^{\star}_F=\U_F$. By Lemma \ref{lem:preim} $k^{-1}(V)$, if nonempty, is some $U'\in\PU_{g(F)}$. By Corollary \ref{cor:union} $U'$ is a disjoint union of some SBA-prime filters from $\U_{g(F)}$. We set $k_F(U)=V$, provided that $U$ is a prime filter in $\U_{g(F)}=T^{\star}_{g(F)}$, $U\subseteq U'$ and $k^{-1}(V)=U'$. In this way we have defined a map $k_{F}$ from $T^{\star}_{g(F)}$ to  $S^{\star}_F$.

\begin{lemma} All $k_F$ are homomorphisms of \'{e}tale spaces. Moreover, the collection of homomorphisms $k_F$, $F\in (S/\D)^{\star}$, constitutes a $\overline{k}^{-1}$-cohomomorphism $\tilde{k}: T^{\star}\rightsquigarrow S^{\star}$.
\end{lemma}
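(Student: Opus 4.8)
The plan is to check two things: first, that each $k_F$ is a well-defined single-valued map $T^{\star}_{g(F)}\to S^{\star}_F$ (which, for a map between the discrete stalks of two \'{e}tale spaces, is all that ``homomorphism of \'{e}tale spaces'' amounts to here), and second, that the family $(k_F)_{F\in(S/\D)^{\star}}$ satisfies the defining condition of a $\overline{k}^{-1}$-cohomomorphism recalled in Section~\ref{s:preetale}, i.e.\ that for every open $W\subseteq(T/\D)^{\star}$ and every $s\in T^{\star}(W)$ the map $F\mapsto k_F(s(g(F)))$ is a section of $S^{\star}$ over $g^{-1}(W)$. Write $\alpha_S\colon S\to S/\D$ and $\alpha_T\colon T\to T/\D$ for the canonical projections. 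By Lemma~\ref{lem:induced}, $\alpha_S k=\overline{k}\alpha_T$; hence for $F\in(S/\D)^{\star}$ one has $k^{-1}(\alpha_S^{-1}(F))=\alpha_T^{-1}(g(F))$, and for $a\in T$ one has $\alpha_S(k(a))=\overline{k}(\alpha_T(a))$, so that $g^{-1}\bigl(M'(\alpha_T(a))\bigr)=M'(\alpha_S(k(a)))$. These identities do all the bookkeeping.

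For well-definedness, fix $F\in(S/\D)^{\star}$. By Lemma~\ref{lem:preim}, for each $V\in S^{\star}_F=\U_F$ the set $k^{-1}(V)$ is empty or lies in $\PU_{g(F)}$; distinct members of $\U_F$ are disjoint by Lemma~\ref{prop_ultra}, so the non-empty $k^{-1}(V)$ form a disjoint family whose union is $k^{-1}\bigl(\bigcup_{V\in\U_F}V\bigr)=k^{-1}(\alpha_S^{-1}(F))=\alpha_T^{-1}(g(F))$, using Corollary~\ref{cor:union} for $S$. The same corollary, applied to $T$ and the BA-prime filter $g(F)$, expresses $\alpha_T^{-1}(g(F))$ as the disjoint union of the members of $\U_{g(F)}$ and shows that each $k^{-1}(V)\in\PU_{g(F)}$ is the disjoint union of those members of $\U_{g(F)}$ it contains. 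Hence the partition of $\alpha_T^{-1}(g(F))$ by $\U_{g(F)}$ refines the partition by the non-empty $k^{-1}(V)$, so every $U\in\U_{g(F)}=T^{\star}_{g(F)}$ is contained in exactly one $k^{-1}(V)$; setting $k_F(U)$ equal to that $V$ gives a well-defined map $T^{\star}_{g(F)}\to S^{\star}_F$ agreeing with the rule stated before the lemma.

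For the cohomomorphism condition, fix open $W\subseteq(T/\D)^{\star}$ and $s\in T^{\star}(W)$, and set $\tau(F)=k_F(s(g(F)))$ for $F\in g^{-1}(W)$. Then $f_1\tau=\mathrm{id}$ by construction, so it suffices to prove $\tau$ continuous, which is a local matter. If $F_0\in W$ and $a\in s(F_0)\subseteq T$, then $a\neq 0$, $\alpha_T(a)\in F_0$, and $s(F_0)=X_{a,F_0}=s_a(F_0)$ by Corollary~\ref{cor:new}; since $s$ and the section $s_a\colon M'(\alpha_T(a))\to M(a)$ are sections of $T^{\star}$ agreeing at $F_0$, they agree on a neighbourhood of $F_0$. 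Thus it is enough to treat the case $s=s_a$ on $W\subseteq M'(\alpha_T(a))$ with $0\neq a\in T$, where $s(g(F))=X_{a,g(F)}$; moreover for $F\in g^{-1}(W)\subseteq M'(\alpha_S(k(a)))$ we have $k(a)\neq 0$ and $\alpha_S(k(a))\in F$, so $X_{k(a),F}\in\U_F=S^{\star}_F$ by Lemma~\ref{lem:ultra}. The key identity is $k_F(X_{a,g(F)})=X_{k(a),F}$: by the uniqueness in the definition of $k_F$ it suffices to show $k\bigl(X_{a,g(F)}\bigr)\subseteq X_{k(a),F}$, and indeed if $x\geq t\leq a$ with $\alpha_T(t)\in g(F)$, then $k(x)\geq k(t)\leq k(a)$ and $\alpha_S(k(t))=\overline{k}(\alpha_T(t))\in F$, so $k(x)\in X_{k(a),F}$ by the definition of the latter set. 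Consequently $\tau$ equals $s_{k(a)}$ on $g^{-1}(W)\subseteq M'(\alpha_S(k(a)))$, hence is locally a section and therefore a section of $S^{\star}$ over $g^{-1}(W)$; so $\tilde{k}=(k_F)_F$ is a $\overline{k}^{-1}$-cohomomorphism $T^{\star}\rightsquigarrow S^{\star}$.

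The manipulations with $\alpha_S,\alpha_T,\overline{k}$ and the gluing of locally defined sections are routine. I expect the two load-bearing points to be the combinatorial refinement argument that makes each $k_F$ single-valued, and the identity $k_F(X_{a,g(F)})=X_{k(a),F}$; it is exactly here that properness of $k$ (so that $g$ is defined on all of $(S/\D)^{\star}$) and the uniqueness assertions of Lemma~\ref{prop_ultra} and Corollary~\ref{cor:new} are genuinely used, and it is the step that will require the most care.
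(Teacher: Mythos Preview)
Your argument is correct and considerably more careful than the paper's. The paper's proof is three lines: it records that each $k_F$ respects the projections, notes that stalks are discrete so each $k_F$ is trivially continuous, and then asserts that $F\mapsto k_F(B(g(F)))$ is a section because it is ``continuous as a composition of continuous maps''. That last step is really the whole content of the lemma, and the paper does not explain why the family $(k_F)_F$ assembles to something continuous; taken literally, the argument is close to circular.

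Your route is genuinely different and more informative: after the refinement argument for well-definedness (which the paper leaves implicit in the paragraph preceding the lemma), you prove the explicit identity $k_F(X_{a,g(F)})=X_{k(a),F}$ via $k(X_{a,g(F)})\subseteq X_{k(a),F}$. This shows concretely that $\tilde{k}$ carries the basic section $s_a=M(a)$ to the basic section $s_{k(a)}=M(k(a))$, so locally $\tau=s_{k(a)}$ and the section condition is immediate. What your approach buys is an actual proof of continuity together with a transparent description of $\tilde{k}$ on the generating sections; what the paper's approach buys is brevity, at the cost of leaving the key verification to the reader. Your two flagged load-bearing points are indeed where the substance lies.
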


\begin{proof} By the construction, the maps $k_F$ are agreed with projections, and they are continuous, since stalks have discrete topology. Denote $g=\overline{k}^{-1}$. Let $U\in (T/\D)^{\star}$, $B\in T^{\star}(U)$ and $F\in g^{-1}(U)$. Then
$k_F(B(g(F)))\in S^{\star}_F$. This and the fact that the map $k_F\cdot B\cdot g$ is continuous as a composition of continuous maps prove that the map $F\mapsto k_F(B(g(F)))$, $F\in g^{-1}(U)$, is a section of $S^{\star}$ over $g^{-1}(U)$, as required.
\end{proof}

It follows that setting ${\bf ES}(S)=S^{\star}$, $S\in\Ob(\SkewB)$, and ${\bf ES}(k)=\tilde{k}$,  $k\in\Hom(\SkewB)$, defines a functor ${\bf ES}$ from the category $\SkewB$ to the category $\LCBSh$.

\section{Proof of Theorem \ref{th:main}}\label{s:proof1}

We are going to prove that the functors ${\bf{ES}}:\SkewB\to\LCBSh$ and ${\SB}:\LCBSh\to \SkewB$ establish the required equivalence of categories, where the natural isomorphism $\beta: 1_{\SkewB}\to {\SB}\cdot{\bf{ES}}$ and $\gamma: 1_{\LCBSh}\to {\bf{ES}}\cdot{\SB}$ are given by
\begin{equation}\label{eq:beta} \beta_S(a)=M(a), S\in \Ob(\SkewB), a\in S;
\end{equation}
\begin{equation}\label{eq:gamma} \gamma_{E}(A)=N_A=\{N\in E^{\star}: A\in N\}, E\in \Ob(\LCBSh), A\in E.
\end{equation}

We have already shown that ${\bf{ES}}:\SkewB\to\LCBSh$ and ${\SB}:\LCBSh\to \SkewB$ are well-defined functors. We split the rest of the proof into two lemmas.

\begin{lemma} $\beta: 1_{\SkewB}\to {\bf{ES}}\cdot {\SB}$ is a natural isomorphism.
\end{lemma}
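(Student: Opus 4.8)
The plan is to show that $\beta_S : S \to \SB(\ES(S)) = (S^\star, f, (S/\D)^\star)^\star$ defined by $\beta_S(a) = M(a)$ is an isomorphism of skew Boolean algebras for every $S$, and that these maps are natural in $S$. First I would record that $M(a)$ is indeed an element of the dual skew Boolean algebra $S^\star{}^\star$: by Lemma \ref{lem:pi} the map $s_a : M'(\alpha(a)) \to M(a)$ is a section of the \'etale space $(S^\star, f, (S/\D)^\star)$ over the compact clopen set $M'(\alpha(a))$, so $M(a) \in S^\star{}^\star$, and moreover $f(M(a)) = M'(\alpha(a))$, i.e. $\delta(\beta_S(a)) = \beta_{S/\D}(\alpha(a))$ under the classical Stone isomorphism. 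This identifies, via Lemma \ref{lem:connection}, the map induced by $\beta_S$ on maximal lattice images with the classical Stone natural isomorphism, which is already known to be an isomorphism; so by the five-lemma-style argument it suffices to check $\beta_S$ is a bijective homomorphism, and injectivity/surjectivity can be checked $\D$-class by $\D$-class.

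Next I would verify that $\beta_S$ is a homomorphism. Using the description of the germs of $M(a)$, namely $M(a)(F) = X_{a,F}$ for $F \in M'(\alpha(a))$ (Lemma \ref{lem:ultra}, Corollary \ref{cor:new}), one checks the three operations. For quasi-intersection: $M(a) \ocap M(b)$ is $M(a)$ restricted to $f^{-1}(M'(\alpha(a))) \cap f^{-1}(M'(\alpha(b))) = M'(\alpha(a \wedge b))$, and on that set its germ at $F$ is $X_{a,F}$; one shows $X_{a,F} = X_{a \wedge b, F}$ whenever $D_{a \wedge b} \in F$ (both contain $a \wedge b$ and lie in $\U_F$, so they coincide by Lemma \ref{prop_ultra}), hence $M(a) \ocap M(b) = M(a \wedge b)$. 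For quasi-union one argues similarly on $U \setminus V$ versus $V$, reducing to $X_{a,F}$ on $M'(\alpha(a)) \setminus M'(\alpha(b))$ and $X_{b,F}$ on $M'(\alpha(b))$; and for relative difference and the empty section the verifications are immediate ($M(0) = \varnothing$). These are the routine calculations I would not grind through in full.

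For bijectivity I would work fibrewise. Surjectivity: an arbitrary element of $S^\star{}^\star$ is a compact clopen section $A \in S^\star(M'(B))$ for some $B \in S/\D$; fix $b \in S$ with $\alpha(b) = B$; then $A$ and $M(b)$ are both sections over $M'(B)$, and over each $F \in M'(B)$ both germs $A(F)$ and $X_{b,F}$ lie in the same stalk $\U_F$. The point is that $A(F)$, being an SBA-prime filter in $\U_F$, equals $X_{a,F}$ for $a = $ (any element of $A(F)$) by Corollary \ref{cor:new}, and using that $A$ is a section one upgrades this to finding a single $a \in S$ with $\alpha(a) = B$ and $A = M(a)$ — here I would use that $A$ is determined by its germs together with continuity of $A$ as a section, together with Corollary \ref{cor:new}. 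Injectivity: if $M(a) = M(a')$ then first $\alpha(a) = \alpha(a')$ by the remark above (classical Stone injectivity), so $D_a = D_{a'}$; then for every $F$ with $D_a \in F$ we have $X_{a,F} = X_{a',F}$, and since $a$ is the unique element of $D_a \cap X_{a,F}$ lying below $a$... more precisely $a = a'$ follows because $a$ and $a'$ lie in the same $\D$-class and $X_{a,F} = X_{a',F}$ forces $a \in X_{a',F}$, whence $a \leq$ and $\geq$-comparisons with elements of $\alpha^{-1}(F)$ below $a'$ pin down $a = a'$; concretely, were $a \neq a'$ with $\alpha(a) = \alpha(a')$, then $a \wedge a' \lneq a$ in $\lceil a \rceil$, and choosing a BA-prime filter $F$ containing the $\D$-class of the relative complement $a \setminus (a \wedge a')$... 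I would instead argue: $a \neq a'$ with $a \D a'$ gives a nonzero $t = a \setminus a' \in \lceil a \rceil \setminus \{0\}$; pick $F$ a BA-prime filter with $\alpha(t) \in F$; then $t \in X_{t,F} = X_{a,F}$ but $t \notin X_{a',F}$ (since $t \wedge a' = 0 \notin \alpha^{-1}(F)$ blocks $t$ from being above anything below $a'$ in $\alpha^{-1}(F)$), contradicting $X_{a,F} = X_{a',F}$.

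Finally, naturality: for a proper homomorphism $k : T \to S$ I would check that the square relating $\beta_T$, $\beta_S$, $k$, and $\SB(\ES(k)) = \tilde k$ commutes, i.e. $\tilde k(M(b)) = M(k(b))$ for $b \in T$. Unwinding the definition of $\tilde k$ from the lemma preceding Section \ref{s:proof1}: the germ of $\tilde k(M(b))$ at $F \in (S/\D)^\star$ is $k_F(M(b)(g(F))) = k_F(X_{b, g(F)})$ where $g = \overline{k}^{-1}$, and by construction $k_F(U) = V$ means $k^{-1}(V) \supseteq U$ with $k^{-1}(V) \in \PU_{g(F)}$; taking $V = X_{k(b),F}$ one checks $b \in k^{-1}(X_{k(b),F})$ and $X_{b,g(F)} \subseteq k^{-1}(X_{k(b),F})$, so $k_F(X_{b,g(F)}) = X_{k(b),F} = M(k(b))(F)$, and comparing the base maps (which agree by classical Stone naturality) gives $\tilde k(M(b)) = M(k(b))$.

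I expect the main obstacle to be the surjectivity step: showing every compact clopen section $A$ of the dual \'etale space is globally of the form $M(a)$ for a single $a \in S$, rather than just locally. The resolution is that $A$, being a section over the compact clopen $M'(B)$, can be covered by basic opens $M(a_i)$, which by compactness reduces to finitely many, and then one patches the $a_i$ together using quasi-union in $S$ (relying on Corollary \ref{cor:new} to guarantee the germs agree on overlaps, so that the quasi-union of the $a_i$ has exactly the germs of $A$); the fact that the pieces $M(a_i) \ocap M(a_j)$ agree because $X_{a_i,F} = X_{a_j,F}$ on the overlap is what makes the patched element well-defined and correct.
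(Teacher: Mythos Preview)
Your overall plan is sound and matches the paper's: well-definedness, homomorphism, bijectivity, naturality. The homomorphism verification and the naturality square are essentially as in the paper, and your surjectivity argument via a finite cover by basic opens $M(a_i)$ and patching with $\vee$ is a perfectly good (more explicit) version of the paper's one-line claim that $S^{\star}(M'(A))=\{M(a):a\in A\}$.

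The genuine gap is in your injectivity argument for the case $a\neq a'$ with $a\,\D\,a'$. You set $t=a\setminus a'$ and claim $t\neq 0$. But in a left-handed skew Boolean algebra the $\D$-classes are flat: $a\,\D\,a'$ means $a\wedge a'=a$ and $a'\wedge a=a'$. The relative complement $a\setminus a'$ is by definition the complement of $a\wedge a'\wedge a$ in $\lceil a\rceil$, and here $a\wedge a'\wedge a=a\wedge a'=a$, so $a\setminus a'=0$. Your element $t$ vanishes and the rest of the argument collapses. (The earlier sentence about ``$a\wedge a'\lneq a$ in $\lceil a\rceil$'' is wrong for the same reason.) More generally, there is no skew-algebraic operation that directly measures the discrepancy between two $\D$-equivalent elements; you have to work with the natural partial order instead.

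The paper's fix is exactly this: set $C=\{c\in S: c\leq a\text{ and }c\leq a'\}$, show that $\alpha(C)$ is an ideal of $S/\D$ not containing $D_a$ (if some $c\in C$ had $c\,\D\,a$ then $c\leq a$ forces $c=a$ and $c\leq a'$ forces $c=a'$), and then invoke the prime ideal theorem to produce a BA-prime filter $F$ with $D_a\in F$ and $F\cap\alpha(C)=\varnothing$. For that $F$ one has $a'\notin X_{a,F}$, since $a'\in X_{a,F}$ would require a witness $t\leq a$ with $t\in\alpha^{-1}(F)$ and $a'\geq t$, i.e.\ $t\in C$ with $D_t\in F$, contradicting the choice of $F$. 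This is the step your proposal is missing; once you replace the $a\setminus a'$ argument with this ideal-plus-prime-filter argument, the rest of your proof goes through.
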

\begin{proof}
Let $S\in \Ob(\SkewB)$ and $a\in S$. Then $M(a)\in S^{\star}(M'(\alpha(a)))$ and thus $M(a)\in S^{\star\star}$. It follows that the map $\beta_S$ is well-defined.

According to \cite[Lemma 7.11, p.165]{Awo} it is enough to show that $\beta$ is a natural transformation and that each $\beta_S$ is an isomorphism. The construction of the functors ${\bf{ES}}$ and ${\SB}$ immediately implies that for any $f\in \Hom_{\SkewB}(S,S')$ we have $\beta_{S'}\cdot f=({\SB}\cdot{\bf{ES}})(f)\cdot \beta_S$. We are left to show that each  $\beta_S$ is a skew Boolean algebra isomorphism.

Fix $S\in \Ob(\SkewB)$. Let us verify that $\beta_S$ is one-to-one. Let $a,b \in S$ and $a\neq b$. Without loss we assume $a\neq 0$. We aim to show that $M(a)\neq M(b)$. Assume first that $D_a\neq D_b$. By the classical Stone duality, there is a BA-prime filter $F$ of $S/\D$ such that $D_a\in F$ and $D_b\not\in F$. It follows that $b\not\in X_{a,F}$, which implies $M(a)\neq M(b)$.

Assume now that $D_a=D_b$. Since $a\neq 0$, then $D_a$ is a nonzero $\D$-class of $S$. Let
$$
C=\{c\in S: c\leq a \text{ and } c\leq b\}.
$$
The set $C$ is not empty, since $0\in C$. Show that $\alpha(C)$ is an ideal of the Boolean algebra $S/\D$. Let $D\in \alpha(C)$ and $D'\leq D$. Take $d\in D\cap C$ and $c\in D'$. We have $d\wedge c\in D'$ and by Lemma \ref{lem:unique} $d\geq d\wedge c$. Since $a\geq d$ and $b\geq d$ it follows that $a\geq d\wedge c$ and $b\geq d\wedge c$, which implies $d\wedge c\in C$, and thus $D'\in \alpha(C)$. Now let $D_1, D_2 \in \alpha(C)$. Show that $D_1\vee D_2\in \alpha(C)$. Let $c\in D_1\cap C$ and $d\in D_2\cap C$. Then $c\vee d\in D_1\vee D_2$. In addition, since $c\leq a$ and $d\leq a$, we obtain
$$
(c\vee d)\wedge a= (c\wedge a)\vee (d\wedge a)=c\vee d;
$$
$$
a\wedge (c\vee d)= (a\wedge c)\vee (a\wedge d)=c\vee d.
$$
Therefore, $c\vee d\leq a$. Similarly one shows that $c\vee d\leq b$. Hence $c\vee d\in C$ and thus $D_1\vee D_2\in \alpha(C)$.

 From the definition of $C$ and from $D_a=D_b$, it follows that $D_a\not\in \alpha(C)$. By Birkhoff prime ideal theorem, there is a prime ideal $M$ of $S/\D$, such that $D_a\not\in M$ and $M\supseteq \alpha(C)$. Denote $F=S/\D\setminus M$. Then $F$ is a BA-prime filter of $S/\D$ such that $D_a\in F$ and $F\cap M=\varnothing$. In addition,  $X_{a,F}\cap C=\varnothing$.
If $b\in X_{a,F}$ then there exists  $t\in \alpha^{-1}(F)$ such that $t\leq a$ and $b\geq t$. But then $t\in C$, which contradicts $X_{a,F}\cap C=\varnothing$. Therefore, $b\not\in X_{a,F}$, which implies $M(a)\neq M(b)$.

Let us verify that $\beta_S$ is onto. Let $U$ be a compact clopen set in $(S/\D)^{\star}$. By the classical Stone duality, there is $A\in S/\D$ such that $U=M'(A)$. By the definition of the topology on $S^{\star}$, $S^{\star}(M'(A))=\{M(a): a\in A\}$, which implies that $\beta_S$ is onto.

Finally, we verify that $\beta_S$ is a skew Boolean algebra homomorphism (it is then automatically a proper homomorphism since it is a bijection). Let $a,b\in S$. Show that $M(a\wedge b)=M(a)\ocap M(b)$. We have
$$M(a\wedge b)\in S^{\star}(M'(\alpha(a\wedge b)))=S^{\star}(M'(\alpha(a)\wedge\alpha(b))).$$
By the classical Stone duality,  $M'(\alpha(a)\wedge\alpha(b))=M'(\alpha(a))\cap M'(\alpha(b))$. Therefore,
$$M(a\wedge b)\in S^{\star}(M'(\alpha(a))\cap M'(\alpha(b))).$$ Let $F\in M'(\alpha(a))\cap M'(\alpha(b))$. Then $(M(a\wedge b))(F)=X_{a\wedge b,F}$.

Since $M(a)\in S^{\star}(M'(\alpha(a)))$ and $M(b)\in S^{\star}(M'(\alpha(b)))$, then $M(a)\ocap M(b)\in S^{\star}(M'(\alpha(a))\cap M'(\alpha(b)))$. Let $F\in M'(\alpha(a))\cap M'(\alpha(b))$. Then $(M(a)\ocap M(b))(F)=(M(a))(F)=X_{a,F}$. Since $a\geq a\wedge b$, it follows that $a\in X_{a\wedge b,F}$, which, together with Corollary \ref{cor:new}, implies $X_{a, F}=X_{a\wedge b,F}$. This completes the proof of the equality $M(a\wedge b)=M(a)\ocap M(b)$. That $M(a\vee b)=M(a)\ucup M(b)$ is proved in the same manner. Finally, it is clear that $M(0)=\varnothing$.
\end{proof}

Let $(E,f,X)$ and $(G,g,Y)$ be \'{e}tale spaces. They are called {\em isomorphic}, provided that there exist homeomorphisms $\varphi:E\to G$ and $\psi:X\to Y$, such that $g \cdot\varphi=\psi\cdot f$.

\begin{lemma} $\gamma: 1_{\LCBSh}\to {\bf{ES}}\cdot{\SB}$ is a natural isomorphism.
\end{lemma}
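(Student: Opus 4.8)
The plan is to argue exactly as for $\beta$: by \cite[Lemma 7.11, p.165]{Awo} it suffices to check that $\gamma$ is a natural transformation and that each component $\gamma_E$ is an isomorphism of \'{e}tale spaces. Naturality of $\gamma$ with respect to an arbitrary morphism of $\LCBSh$ follows formally from the constructions of the functors $\SB$ and ${\bf ES}$, verified in the same routine way as the corresponding square for $\beta$, so the substance is, for a fixed $(E,f,X)\in\Ob(\LCBSh)$, to produce an isomorphism of $(E,f,X)$ onto ${\bf ES}(\SB(E))=(E^{\star})^{\star}$ whose map of total spaces is $\gamma_E$ and whose map of base spaces is the homeomorphism $\psi$ supplied by the classical Stone duality.

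First I would establish well-definedness: for a germ $A\in E$ with $x=f(A)$, the set $N_A=\{N\in E^{\star}:A\in N\}$ is an SBA-prime filter of $E^{\star}$. By Lemma \ref{lem:parord} and the definition of $\ocap$ it is a filter; since the compact clopen sets of $X$ are closed under the Boolean operations, every compact clopen $U\ni x$ carries the restriction to $U$ of a section through $A$, so $\delta(N_A)$ is the set of all compact clopen $U\ni x$, which under the identification $E^{\star}/\D\cong X^{\star}$ of Lemma \ref{lem:connection} is the BA-prime filter $F:=\gamma_X(x)$; hence $N_A\in\PU_F$. For minimality, fix a section $N_0\in E(U_0)$ with $A\in N_0$ and prove $N_A=X_{N_0,F}$: the inclusion $X_{N_0,F}\subseteq N_A$ is immediate, and for the reverse, given $P\in N_A$, the equalizer of the sections $P$ and $N_0$ is an open subset of $X$ containing $x$ and so contains a compact clopen $W$ with $x\in W\subseteq U_0$, whence $P\supseteq N_0|_W$ with $N_0|_W\leq N_0$ and $N_0|_W\in\alpha^{-1}(F)$, so $P\in X_{N_0,F}$. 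By Lemma \ref{lem:ultra} this makes $N_A$ an SBA-prime filter lying in $\U_F$; in particular the covering map of $(E^{\star})^{\star}$ sends $\gamma_E(A)$ to $F$, which --- modulo the identification of Lemma \ref{lem:connection} and $\gamma_X$ --- is exactly the compatibility of covering maps required of an \'{e}tale-space morphism.

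Next I would verify that $\gamma_E\colon E\to (E^{\star})^{\star}$ is a bijection. For injectivity, let $A\neq B$ be germs: if $f(A)\neq f(B)$, separate them by a compact clopen set and restrict a section through $A$ to it, producing a member of $N_A\setminus N_B$; if $f(A)=f(B)=x$, restrict a section through $A$ and a section through $B$ to a common compact clopen neighbourhood of $x$, producing a section that lies in exactly one of $N_A$, $N_B$. For surjectivity, let $N$ be an SBA-prime filter; then $N\in\U_F$ for a unique $F$, which by the classical Stone duality is $\gamma_X(x)$ for a unique $x\in X$, and $N=X_{M_0,F}$ for every $M_0\in N$ by Corollary \ref{cor:new}; setting $A:=M_0(x)$, the computation of the previous paragraph gives $N_A=X_{M_0,F}=N$, so $N=\gamma_E(A)$.

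Finally, for a compact clopen section $a$ of $E$ one computes $\gamma_E^{-1}(M(a))=a$, where $a$ is regarded as an open subset of $E$, and, using surjectivity, $\gamma_E(a)=M(a)$; since the sets $M(a)$ form a subbasis of $(E^{\star})^{\star}$ and the open subsets underlying sections form a basis of $E$, this shows at once that $\gamma_E$ is continuous and open, hence a homeomorphism. Combined with the base homeomorphism $\psi$ --- the composite of $\gamma_X$ with the homeomorphism induced by the isomorphism $X^{\star}\cong E^{\star}/\D$ of Lemma \ref{lem:connection} --- and the compatibility of covering maps noted above, $\gamma_E$ is an isomorphism of \'{e}tale spaces. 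The main obstacle, I expect, is the well-definedness and minimality step together with surjectivity: all rest on the fact that an SBA-prime filter of $E^{\star}$ is generated by a single germ, which combines Corollary \ref{cor:new} with two standard features of \'{e}tale spaces over locally compact Boolean spaces --- that the equalizer of two sections is open, and that such an open set contains a compact clopen neighbourhood of each of its points. The separation arguments for injectivity and the matching of the two topologies are then routine.
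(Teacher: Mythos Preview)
Your proposal is correct and follows essentially the same route as the paper: well-definedness via $N_A=X_{N_0,F}$ using the open-equalizer argument, injectivity by case split on $f(A)=f(B)$, surjectivity via Corollary~\ref{cor:new}, and the homeomorphism via $\gamma_E(a)=M(a)$. The only cosmetic difference is that the paper, having shown $N_A\in\PU_F$, argues only the inclusion $N_A\subseteq X_{N_0,F}$ and invokes minimality of $X_{N_0,F}$ for the reverse, whereas you verify both inclusions directly; similarly, for injectivity when $f(A)=f(B)$ the paper simply observes that any section through $A$ already lies in $N_A\setminus N_B$, without restricting.
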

\begin{proof}
We start from the verification that the maps  $\gamma_{E}$ are well-defined. Let $E=(E,f,X)\in \Ob(\LCBSh)$ and $a\in E$. We aim to show that $N_a=\{N\in E^{\star}: a\in N\}$ is an SBA-prime filter of $E^{\star}$. By the classical Stone duality the set $N'_{f(a)}$ is a BA-prime filter of $X^{\star}$. First we establish that $N_a\in\PU_{N'_{f(a)}}$. A direct verification shows that $N_a$ is a filter.  Suppose $N\in E^{\star}$ is such that $a\in N$. Then $f(a)\in f(N)$, which implies that $f(N)\in N'_{f(a)}=\{N'\in X^{\star}:f(a)\in N'\}$. It follows that $N_a\subseteq f^{-1}(N'_{f(a)})$. We are left to verify that $N_a\cap N'\neq \varnothing$ whenever $N'$ is a compact clopen subset of $X$, such that $f(a)\in N'$. Since $f$ is a local homeomorphism, it follows that there is a compact clopen subset $U$ of $N'$ such that $f(a)\in U$ and an open neighborhood $V\in E(U)$ of $a$ such that the restriction of $f$ to $V$ maps it homeomorphically onto $U$. Applying the fact that $N'\setminus U$ is also compact clopen set (this follows from the classical Stone duality) and the gluing axiom, we glue together $V$ and any $V'\in E(N'\setminus U)$ to obtain a section $V''\in E(U)$, such that $a\in V''$, as required.

Fix some $N\in N_a$. Now we aim to show that $N_a=X_{N, N'_{f(a)}}$.  In view of  $N_a\in\PU(N'_{f(a)})$, it is enough to establish the inclusion $N_a\subseteq X_{N, N'_{f(a)}}$. Let $M\in N_a$. Since $a\in M\cap N$ and since $f$ is a local homeomorphism, it follows that there is a compact clopen neighborhood $W$ of $a$ such that the restrictions of $M$ and $N$ to $W$ coincide. Denote these restrictions by $T$. We have $T\in N_a\subseteq f^{-1}(N'_{f(a)})$, and by Lemma \ref{lem:parord} also that $N\geq T$ and $M\geq T$. It follows that $M\in X_{N, N'_{f(a)}}$. This finishes the proof that $N_a$ is an SBA-prime filter of $E^{\star}$, and that is why $\gamma_{E}$ is well defined.

According to \cite[Lemma 7.11, p.165]{Awo}, to show that $\gamma$ is a natural isomorphism, it is enough to verify that $\gamma$ is a natural transformation and that each $\gamma_{E}$ is an isomorphism. Let $E=(E,f,X)\in \Ob(\LCBSh)$.  The construction of the functors ${\bf{ES}}$ and ${\SB}$ immediately implies that for any $f\in \Hom_{\LCBSh}(E,E')$ we have $\gamma_{E'}\cdot f=({\bf{ES}}\cdot{\SB})(f)\cdot \gamma_{E}$. We are left to show that each $\gamma_{E}$ is an isomorphism of \'{e}tale spaces.

First we show that  $\gamma_{E}$ is one-to-one. Suppose $a,b\in E$ and $a\neq b$. If $f(a)\neq f(b)$ then $N_a\neq N_b$ is a consequence of $N'_{f(a)} \neq N'_{f(b)}$, which follows from the classical Stone duality. Suppose $f(a)=f(b)$ and let $N\in N_a$. Since $a$ and $b$ belong to the same stalk of $E$ and since $N$ is a section, it follows that $b\not\in N$, so that $N\not\in N_b$. Hence $N_a\neq N_b$ in this case as well.

We proceed to show that $\gamma_{E}$ is onto. Let $U$ be an SBA-prime filter of $E^{\star}$ and $F$ be a BA-prime filter of $X^{\star}$ such that $U\in \U_F$. By the classical Stone duality, there is $A\in X$ such that $F=N'_A$.  For any $a\in E_A$ (that is such an $a$ that $f(a)=A$) we have $N_a\in\U_{N'_A}$, as follows from the the above proved equality $N_a=X_{N, N'_{f(a)}}$, $a\in N$. By Corollary \ref{cor:new}, the SBA-prime filters $X_{N,N'_{f(a)}}$ exhaust all the set $\U_{N'_A}$. Thus $N_a\in \U_{N'_A}$.

Finally, we specify an isomorphism between $E$ and $E^{\star\star}$. Lemma \ref{lem:connection} and the definition of the dual \'{e}tale space imply that the covering map $\tilde{f}:E^{\star\star}\to X^{\star\star}$ is given by $N_a\mapsto N'_{f(a)}$, $a\in E$. By the classical Stone duality, $X$ is homeomorphic to $X^{\star\star}$ via the map $\varphi$ which sends $a\in X$ to $N'_a=\{N'\in X^{\star}:a\in N'\}\in X^{\star\star}$. Let $(E,f,X)\in \Ob(\LCBSh)$. Then it is easily seen that $\tilde{f}\cdot\gamma_E=\varphi \cdot f$. We are left to show that $\gamma_{E}$ is open and continuous.

 By the classical Stone duality, an open set in $X^{\star\star}$ looks like $\tilde{A}=\{F\in X^{\star\star}:A\in F\}$, where $A$ is a compact clopen set of $X$. It follows that the sets $\{U\in E^{\star\star}:a\in U\}$, where $a$ runs through $E(A)$ are all sections in $E^{\star\star}(\tilde{A})$.
 Let $A$ be a compact clopen set of $X$ and $a\in E(A)$. We have
 $$ \gamma_{E}(a)=\{U\in E^{\star\star}: \gamma_E(x)=U {\text{ for some }} x\in a\}=\{U\in E^{\star\star}:a\in U\},
 $$
implying that $\gamma_{E}$ is an open map. Since $\gamma_{E}$ is a bijection, we have also $\gamma_{E}^{-1}(\{U\in E^{\star\star}:a\in U\})=a$, implying that $\gamma_{E}$ is also continuous. This completes the proof.
\end{proof}

Corollary \ref{cor:main} follows now from Theorem \ref{th:main} and the classical Stone duality between Boolean algebras and Boolean spaces.

Let $S$ be a left-handed skew Boolean algebra. For every stalk $S^{\star}_x$, $x\in (S/\D)^{\star}$, let $\tilde{S^{\star}_x}$ be the primitive left-handed skew Boolean algebra, whose non-zero $\D$-class equals ${S^{\star}_x}$. Then $S$ can be represented as a subdirect product of $\prod_{x\in (S/\D)^{\star}}\tilde{S^{\star}_x}$ in the way, which refines the representation of a generalized Boolean algebra $S/\D$ into ${\bf 2}^{(S/\D)^{\star}}$: if $V\in S^{\star}(A)$ is a section, it is mapped to $(v_x)_{x\in (S/\D)^{\star}}$, where $v_x=V(x)$, if $x\in A$ and $v_x=0$, otherwise.

\section{Proof of Theorem \ref{th:main_int}}\label{s:proof2}

Let $(E,f,X)$ be an \'{e}tale space with compact clopen equalizers. Then by Lemma~\ref{lem:int} the dual skew Boolean algebra $(E,f,X)^{\star}$ has finite intersections, and we can consider it as a skew Boolean $\cap$-algebra. Conversely, given a skew Boolean $\cap$-algebra $S$ it follows from Lemma \ref{lem:last} that the \'{e}tale $S^{\star}$ space has compact clopen equalizers.

\begin{lemma}\label{31jan1} Let $S,T$ be skew Boolean algebras with finite intersections and $k:S\to T$ a proper skew Boolean algebra homomorphism that preserves finite intersections. Then $\tilde{k}:S^{\star}\rightsquigarrow T^{\star}$ is an injective \'{e}tale space cohomomorphism over a continuous proper map.
\end{lemma}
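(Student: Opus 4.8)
The plan is to show two things: first, that the $\overline{k}^{-1}$-cohomomorphism $\tilde{k}$ is over a continuous proper map, and second, that it is injective, meaning each stalk map $k_F : T^{\star}_{g(F)} \to S^{\star}_F$ (here $g = \overline{k}^{-1}$) is injective. The first point is essentially free: $k$ is proper, so $\overline{k} : S/\D \to T/\D$ is a proper homomorphism of generalized Boolean algebras, and by the classical Stone duality $g = \overline{k}^{-1} : (T/\D)^{\star} \to (S/\D)^{\star}$ is a continuous proper map. This is exactly the underlying-space part of the data of $\tilde{k}$ already produced in Section~\ref{fromsba}, so nothing new is needed there. The content of the lemma is therefore the injectivity of the stalk maps, and this is where the hypothesis that $k$ preserves finite intersections must be used.

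First I would recall, from the construction preceding Lemma in Section~\ref{fromsba}, how $k_F$ is defined: given $V \in S^{\star}_F = \U_F$, one forms $k^{-1}(V)$, which by Lemma~\ref{lem:preim} is either empty or a preprime filter $U' \in \PU_{g(F)}$; by Corollary~\ref{cor:union}, $U'$ is a disjoint union of SBA-prime filters from $\U_{g(F)}$, and $k_F(U) = V$ precisely when $U \in \U_{g(F)}$, $U \subseteq U'$, and $k^{-1}(V) = U'$. So to prove injectivity of $k_F$, I would suppose $k_F(U) = V_1$ and $k_F(U) = V_2$ with $U \in T^{\star}_{g(F)}$, and aim to conclude $V_1 = V_2$. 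By the defining condition, $U \subseteq k^{-1}(V_1)$ and $U \subseteq k^{-1}(V_2)$, so $U \subseteq k^{-1}(V_1) \cap k^{-1}(V_2) = k^{-1}(V_1 \cap V_2)$, where $V_1 \cap V_2$ is taken in $S^{\star}$. Now $V_1, V_2 \in \U_F$, so by Lemma~\ref{prop_ultra} either $V_1 = V_2$ (and we are done) or $V_1 \cap V_2 = \varnothing$; in the latter case $k^{-1}(V_1 \cap V_2) = k^{-1}(\varnothing)$, and I would need this to be empty — but $k(0_T) = 0_S \in V_1 \cap V_2 = \varnothing$ is impossible, so actually the intersection $V_1 \cap V_2$ as a \emph{set} need not be interpreted via the filter-intersection $\cap$ in $S^\star$; rather $V_1 \cap V_2$ here is genuine set intersection of filters, which is empty, and a filter is nonempty by definition, so $U \subseteq \varnothing$ forces $U = \varnothing$, contradicting $U \in T^{\star}_{g(F)}$ (prime filters are nonempty by Lemma~\ref{lem:ultra}). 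Hence $V_1 = V_2$.

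The subtle point — and the step I expect to be the main obstacle — is justifying why $\tilde k$ is \emph{well-defined as a single-valued map on stalks at all}, i.e.\ why, for a given $U \in T^{\star}_{g(F)}$, there is at most one $V \in S^{\star}_F$ with $U \subseteq k^{-1}(V)$; this is really the same injectivity statement, and the argument above handles it, but it is worth noting that the preservation of finite intersections is what guarantees $S^{\star}$ and $T^{\star}$ are \'{e}tale spaces with compact clopen equalizers (Lemma~\ref{lem:last}), so that the cohomomorphism lands in the correct subcategory $\ESLCBSE$; without $\cap$-preservation the stalk maps could still be multivalued in the way a general cohomomorphism is. Concretely, I would organize the final write-up as: (1) invoke classical Stone duality for properness/continuity of $g$; (2) use Lemma~\ref{lem:last} to note both dual \'{e}tale spaces have compact clopen equalizers; (3) run the intersection argument above, using Lemma~\ref{prop_ultra} and nonemptiness of filters, to conclude each $k_F$ is injective; and (4) remark that injectivity of all stalk maps is exactly the definition of an injective \'{e}tale space cohomomorphism. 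The one routine check I would not belabor is that $k^{-1}(V_1 \cap V_2) = k^{-1}(V_1) \cap k^{-1}(V_2)$ for set intersection, which is immediate, together with the observation that the filter-theoretic intersection in $S^{\star}$ coincides with set intersection when the latter is nonempty (Lemma~\ref{lem:int} with Lemma~\ref{lem:parord}).
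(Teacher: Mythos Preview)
Your core argument proves the wrong statement. You set out to show that each stalk map $k_F$ is injective, but what you actually verify is that $k_F$ is \emph{single-valued}: you assume $k_F(U)=V_1$ and $k_F(U)=V_2$ for a \emph{single} $U$ and conclude $V_1=V_2$. That is well-definedness of $k_F$ as a function, not injectivity, and indeed it follows from Lemma~\ref{prop_ultra} with no hypothesis on $\cap$ at all. Injectivity has the opposite quantifier pattern: assume $k_F(U_1)=k_F(U_2)=V$ for two points $U_1,U_2$ in the domain and deduce $U_1=U_2$. This is exactly how the paper's proof begins. Your later remark that well-definedness ``is really the same injectivity statement'' is simply false.

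A further warning sign is that your argument never invokes the hypothesis that $k$ preserves $\cap$, except in the peripheral appeal to Lemma~\ref{lem:last}. Since $\tilde{k}$ is already a cohomomorphism for \emph{any} proper homomorphism $k$, the $\cap$-preservation must be what forces injectivity, and it has to appear in the main computation. The paper's proof uses it as follows (in filter language): given $U_1\neq U_2$ with $k_F(U_1)=k_F(U_2)=V$, pick $a_i\in U_i$; then $k(a_1),k(a_2)\in V$, and because $V$ is an SBA-prime filter one finds $k(a_1)\cap k(a_2)=k(a_1\cap a_2)$ still has its $\D$-class in $F$, whence $a_1\cap a_2\in\alpha^{-1}(g(F))$. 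From $a_1\cap a_2\leq a_i$ and Corollary~\ref{cor:new} one gets $a_1\cap a_2\in U_1\cap U_2$, and Lemma~\ref{prop_ultra} then gives $U_1=U_2$. The paper phrases this in terms of sections $B_1,B_2$ and the section $B_1\cap B_2$ (available because the spaces have compact clopen equalizers), but the content is the same: $\cap$-preservation is what transports the ``common refinement'' witness back across $k$.
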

\begin{proof}  Denote $g=\overline{k}^{-1}$. By the classical Stone duality, $g$ is a continuous proper map. It is enough to show that all the maps $k_F$, $F\in (S/\D)^{\star}$ are injective. Let $F\in (S/\D)^{\star}$. Let, further, $U_1,U_2\in T^{\star}_{g(F)}$ and $V\in S^{\star}_F$ be such that $k_F(U_1)=k_F(U_2)=V$. By the definition of $k_F$, it follows that $k^{-1}(V)=U'$, where $U'\in\PU_{g(F)}$, and $U_1,U_2\subseteq U'$. Let $B_1$ and $B_2$ be  sections of $T^{\star}$ over $U$ such that $U_1\in B_1$ and $U_2\in B_2$, that is $B_1(g(F))=U_1$ and $B_2(g(F))=U_2$. Then the intersection $B_1\cap B_2$ is a section of $T^{\star}$ over some $V$ such that $g(F)\not\in V$ and $k_F((B_1\cap B_2)(g(F)))$ is a section of $S^{\star}$ over some $V'$ such that $F\in V'$. This shows that $V'\neq g^{-1}(V)$, which contradicts the fact that $k$ is an \'{e}tale space cohomomorphism. Therefore, $k_F$ is injective.
\end{proof}

\begin{lemma}\label{31jan2} Let $(E,e,X)$ and $(G,g,Y)$  be \'{e}tale spaces and $k:E\rightsquigarrow G$ an injective \'{e}tale space cohomomorphism over a continuous proper  map. Then $k: E^{\star}\to G^{\star}$  preserves finite intersection, and therefore, can be looked at as a proper skew Boolean $\cap$-algebra homomorphism.
\end{lemma}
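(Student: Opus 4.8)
The plan is to reduce the claim to a computation with equalizer sets of sections, with injectivity of $k$ supplying the single nontrivial step. Write $f\colon Y\to X$ for the underlying continuous proper map and $k_y\colon E_{f(y)}\to G_y$, $y\in Y$, for the components of the cohomomorphism, so that the induced proper skew Boolean algebra homomorphism $k=\SB(k)\colon E^\star\to G^\star$ sends a compact clopen section $A\in E(U)$ to the section of $G$ over $f^{-1}(U)$ whose value at $y$ is $k_y(A(f(y)))$. Since $k$ is already known to be a proper skew Boolean algebra homomorphism, I only need to verify that it commutes with the binary intersection $\cap$ (whence, by associativity and induction, with all finite intersections). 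Throughout I would use that, by Lemma \ref{lem:int}, the operation $\cap$ in $E^\star$ and in $G^\star$ is nothing but the set-theoretic intersection of two sections, viewed as a section over the compact clopen set on which the two sections agree.

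So I would fix compact clopen sections $A\in E(U)$, $B\in E(V)$ and put $W=\{x\in U\cap V: A(x)=B(x)\}$; by the compact clopen equalizers property $W$ is compact clopen and, as a subset of $E$, $A\cap B$ is $A$ restricted to $W$. I then examine the equalizer of $k(A)\in G(f^{-1}(U))$ and $k(B)\in G(f^{-1}(V))$, namely the set $W'=\{y\in f^{-1}(U)\cap f^{-1}(V): k_y(A(f(y)))=k_y(B(f(y)))\}$, and show $W'=f^{-1}(W)$. One inclusion is immediate: if $f(y)\in W$ then $A(f(y))=B(f(y))$, and applying the function $k_y$ gives $y\in W'$, so $f^{-1}(W)\subseteq W'$. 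The reverse inclusion is the whole point and is where the hypothesis enters: if $y\in W'$ then $k_y(A(f(y)))=k_y(B(f(y)))$, and since $k_y$ is injective this forces $A(f(y))=B(f(y))$, i.e.\ $f(y)\in W$. Hence $W'=f^{-1}(W)$, which is compact clopen because $f$ is continuous and proper. Over $f^{-1}(W)$ the section $k(A)\cap k(B)$ coincides with $k(A)$, so its value at $y$ is $k_y(A(f(y)))=k_y((A\cap B)(f(y)))$, which is exactly the value of $k(A\cap B)$ at $y$; as both are sections over the same compact clopen set $f^{-1}(W)$, I conclude $k(A\cap B)=k(A)\cap k(B)$, and by Lemma \ref{lem:int} this common value is the intersection of $k(A)$ and $k(B)$ in $G^\star$.

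Combining this with the already established fact that $k$ is a proper skew Boolean algebra homomorphism gives that $k$ is a proper skew Boolean $\cap$-algebra homomorphism, as required. The step I expect to be the crux — and which I would not treat as a routine verification — is the inclusion $W'\subseteq f^{-1}(W)$: injectivity of the $k_y$ is not decorative, since without it $W'$ can strictly contain $f^{-1}(W)$ (the $k_y$ may identify distinct germs lying over the same point), in which case $k(A)\cap k(B)$ properly contains $k(A\cap B)$ and $\cap$ is not preserved. The remaining ingredients — that preimages of compact clopen sets under continuous proper maps are compact clopen, and that $\cap$ really is intersection of sections — are routine and already available.
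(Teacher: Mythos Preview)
Your proof is correct and follows essentially the same route as the paper's, which is a one-line appeal to injectivity of $k$, Lemma~\ref{lem:parord} (the natural partial order is set inclusion), and the definition of $\cap$; you have unpacked these citations into the explicit equalizer computation $W'=f^{-1}(W)$ and correctly identified the inclusion $W'\subseteq f^{-1}(W)$ via injectivity of the $k_y$ as the only nontrivial point.
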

\begin{proof} The statement follows from injectivity of $k$, Lemma \ref{lem:parord} and the definition of the operation $\cap$.
\end{proof}

The observation at the beginning of this section, Lemmas \ref{31jan1} and \ref{31jan2} and Theorem \ref{th:main}  show that the restrictions ${\bf{ES}}\vert_{\LSBIA}:\LSBIA\to\ESLCBSE$ and ${\SB}\vert_{\ESLCBSE}:\ESLCBSE\to \LSBIA$ establish the equivalence of categories $\LSBIA$ and $\ESLCBSE$, where the natural isomorphism $\beta: 1_{\LSBIA}\to {\SB}\vert_{\LSBIA}\cdot{\bf{ES}}\vert_{\ESLCBSE}$ and $\gamma: 1_{\ESLCBSE}\to {\bf{ES}}\vert_{\ESLCBSE}\cdot{\SB}\vert_{\LSBIA}$ are given as in \eqref{eq:beta} and \eqref{eq:gamma}, which proves Theorem \ref{th:main_int}.

Corollary \ref{cor:main_int} follows now from Theorem \ref{th:main_int} and the classical Stone duality between Boolean algebras and Boolean spaces.

Let $S$ be a left-handed skew Boolean $\cap$-algebra. For every stalk $S^{\star}_x$, $x\in (S/\D)^{\star}$, let $\tilde{S^{\star}_x}$ be the primitive left-handed skew Boolean $\cap$-algebra, whose non-zero $\D$-class equals ${S^{\star}_x}$. Then $S$ can be represented as a subdirect product of $\prod_{x\in (S/\D)^{\star}}\tilde{S^{\star}_x}$ in a similar manner as it is described in the last paragraph of the previous section. Moreover, in the case, when the maximal lattice image of $S$ is a Boolean algebra, this representation is a {\em Boolean product} representation \cite[p.174]{BS}. One can show that this representation is isomorphic to the Boolean product representation given in \cite[4.10]{L3}.

Let finally $S$ be a skew Boolean $\cap$-algebra with finite stalks, whose maximal lattice image is a Boolean algebra. Let $S^{\star}_x$ be a stalk of maximal cardinality, and $\tilde{S^{\star}_x}$ be the corresponding primitive left-handed skew Boolean $\cap$-algebra. From \cite[4.14]{L3} (see also \cite[4.3]{BL}, \cite[5(2)]{Corn}) it follows that finite primitive left-handed skew Boolean $\cap$-algebras are {\em quasi-primal} (for the definition of quasi-primal algebras see \cite{KW},\cite[IV.10]{BS}). This observation implies that the Boolean product representation of $S$, derived above from the representation of Corollary \ref{cor:main_int}, follows from a general universal algebra result of \cite[3.5]{KW}. It would be interesting to try to find a universal algebra background for the duality from Corollary \ref{th:main} or Theorem \ref{th:main}.

\end{document}